\newcommand{\R}{\mathbb{R}}
\newcommand{\N}{\mathbb{N}}
\newcommand{\Q}{\mathbb{Q}}
\newcommand{\Z}{\mathbb{Z}}
\newcommand{\T}{\mathbb{T}}
\newcommand{\C}{\mathbb{C}}
\newcommand{\cK}{\mathcal{K}}
\newcommand{\cJ}{\mathcal{J}}
\newcommand{\cP}{\mathcal{P}}
\newcommand{\cS}{\mathcal{S}}
\newcommand{\Cech}{{\v Cech}{} }
\def\dl{\underrightarrow\lim}
\def\il{\underleftarrow\lim}
\newcommand{\tp}{\tilde{p}}
\newcommand{\tT}{\tilde{T}}
\newcommand{\tS}{\tilde{S}}
\newcommand{\tf}{\tilde{f}}
\newcommand{\tG}{\tilde{G}}
\newcommand{\tX}{\tilde{X}}
\newcommand{\td}{\tilde{d}}
\newcommand{\tx}{\tilde{x}}
\newcommand{\ty}{\tilde{y}}
\newcommand{\Lim}{\varprojlim} %% Inverse limit was {\underleftarrow\lim\ }
\newcommand{\dlim}{\varinjlim} %% Direct limit
\newcommand{\OP}{\Omega_{\Phi}}
\newcommand{\tO}{\tilde{\Omega}}
\newcommand{\larr}{\left( \begin{array}{c}}
\newcommand{\rarr}{\end{array} \right) }
\newcommand{\lsqarr}{\left[ \begin{array}{c}}
\newcommand{\rsqarr}{\end{array} \right]}
\newcommand{\inv}{\varprojlim}
\newtheorem{theorem}{Theorem}
\newtheorem{Theorem}[theorem]{Theorem}
\newtheorem{cor}[theorem]{Corollary}
\newtheorem{claim}[theorem]{Claim}
\newtheorem{question}[theorem]{Question}
\newtheorem{corollary}[theorem]{Corollary}
\newtheorem{conjecture}[theorem]{Conjecture}
\newtheorem{lemma}[theorem]{Lemma} \newtheorem{prop}[theorem]{Proposition}
\newtheorem{Proposition}[theorem]{Proposition}
\newtheorem{example}[theorem]{Example}
\newtheorem{remark}[theorem]{Remark}
\begin{document}
\title{Geometric realization for substitution tilings}
\author{M.~Barge and J.-M.~Gambaudo}

\date{\today}

\begin{abstract} Given an $n$-dimensional substitution $\Phi$ whose associated linear expansion $\Lambda$ is unimodular and hyperbolic, we use elements of the one-dimensional integer \v{C}ech
cohomology of the tiling space $\OP$ to construct a finite-to-one semi-conjugacy $G:\OP\to\T^D$, called geometric realization, between the substitution induced dynamics and an invariant set of a hyperbolic toral automorphism. If $\Lambda$ satisfies a Pisot family condition and the rank of the module of generalized return vectors equals the generalized degree of $\Lambda$, $G$ is surjective and coincides with the map onto the maximal equicontinuous factor of the $\R^n$-action on $\OP$. We are led to formulate a higher-dimensional generalization of the Pisot substitution conjecture: If  $\Lambda$ satisfies the Pisot family condition and the rank of the one-dimensional cohomology of $\OP$ equals the generalized degree of $\Lambda$, then the $\R^n$-action on $\OP$ has pure discrete spectrum.

\end{abstract}

\maketitle

\tableofcontents

\newpage

\section{Introduction}\label{intro}
 
Let $\Delta$ be a subset of the $n$-dimensional Euclidean space $\R^n, n\geq 1.$  A {\bf tiling} of $\Delta$ is a countable collection $T =\{t_j\}_{j\in J}$ 
of topological closed $n$-balls  in  $\Delta$, called {\bf tiles}, that cover $\Delta$ and have pairwise disjoint interiors. 
Consider  a finite collection of polyhedra  $\cP =\{\rho_1, \dots, \rho_m\}$ in $\R^n$, called {\bf prototiles}. We say that $\cP$ generates a tiling  $T =\{t_j\}_{j\in J}$ of $\Delta$ if each $t_i$ is a translated copy of one of the $\rho_j$ and 
the $t_i$'s meet full face to full face in all dimensions (so each $t_i$ has finitely many faces in each dimension, and if $t_i$ and $t_j$ meet in a point $x$ that is in the relative interior of a face of $t_i$ or $t_j$, then they meet in that entire face).  We denote by $\Omega_\cP(\Delta)$ the set of all tilings of $\Delta$ generated by $\cP$, and by $\Omega_\cP$ the set $\Omega_\cP(\R^n)$. 
When $n>1$, it is not always the case that, for a given family $\cP$, the set  $\Omega_\cP$  is not empty. In fact it is well known that the problem of whether or not $\Omega_\cP$ is empty is not decidable \cite{Berger}.  When   $\Omega_\cP\neq \emptyset$, the group $\R^n$ acts on $\Omega_\cP$ by translation: for any tiling 
$T =\{t_j\}_{j\in J}$  in $\Omega_\cP$, and for any $u\in \R^n$ we define:
$$T -u\, =\, \{t_i -u\}_{j\in J}.$$ The set $\Omega_\cP$ has a natural metric defined as follows. 
For each $r\ge0$, let $\bar{B}_r(0)$ denote the closed ball of radius $r$ centered at $0\in\R^n$ and, for $T\in\Omega_\cP$, let $B_r[T]:=\{t_j\in T:t_j\cap\bar{B}_r(0)\ne\emptyset\}$ be the collection of tiles in $T$ that meet $\bar{B}_r(0)$. Given $T,T'\in\Omega_\cP$, let $A$ denote the set of $\epsilon \in (0, 1)$ such that there are $u,u'\in \R^n$, with $|u|,|u'|<\epsilon/2$, so that $B_{1/\epsilon}[T-u]=B_{1/\epsilon}[T'-u']$. Then:
$$
d(T, T') = \left\{
    \begin{array}{ll}
        \inf A & \mbox{if }A\neq \emptyset\\
        1 & \mbox{if not.}
    \end{array}
\right.
$$
In words, $T$ and $T'$ are close if, up to small translation, they agree exactly in a large neighborhood of the origin. When $\Omega_\cP$ is equipped with this metric, the translation action is continuous. If $\Omega_\cP$ has finite local complexity (that is, there are only finitely many patterns of tiles in elements of $\Omega_\cP$ of any fixed finite radius - see below), $\Omega_\cP$ is compact and has the structure of a {\bf lamination} whose $n$-dimensional leaves are the orbits of the translation action and with totally disconnected transverse direction.

Substitutions provide an important method for constructing tilings. Consider a polyhedral family $\cP =\{\rho_1, \dots, \rho_m\}$ of prototiles in $\R^n$ and suppose there is  an expanding linear map $\Lambda:\R^n\to\R^n$ (called {\bf inflation}) so that for each $j\in \{1, \dots , m\}$, there exists a tiling $\cS_j$  of $\Lambda(\rho_j)$ generated by $\cP$. The collection of tilings $\cS= \{\cS_1, \dots, \cS_m\}$ is  called a {\bf substitution rule}. The {\bf incidence matrix} associated with $\Lambda$ and $\cS$ is the 
$m$-by-$m$ matrix $M_{\cS}=(m_{i,j})_{i,j}$, where for each $i,j\in\{1,\ldots,m\}$, 
the entry $m_{i,j}$ is the number of translated copies of $\rho_i$ in $\cS_j$.\footnote{ It is often desirable to mark, or color, prototiles and tiles so that tiles may occupy the same underlying set but
nonetheless be distinct. Formally, then, a tile is a pair $\tau=(t,m)$ where $t$ is a closed topological 
$n$-ball and $m$ is one of finitely many possible marks. We will stick to the simpler unmarked language in this paper, though all of the results are to be understood in the more general setting.}
Recall that a matrix $M$ is \emph{primitive} if there exists $k>0$ such that all the 
elements of $M^k$ are positive.

By iterating inflation followed by substitution infinitely many times, one sees that $\Omega_{\cP}\ne\emptyset$. 
On the one hand, the inflation $\Lambda$  induces a natural continuous map $\cK$
from $\Omega_{\cP}$ to $\Omega_{ \Lambda \cP}$, where $\Lambda\cP = \{\Lambda p \mid p\in\cP\}$. 
On the other hand, the substitution rule $\cS$ induces a continuous map $\cJ$ from $\Omega_{ \Lambda\cP}$ to $\Omega_{ \cP}$, 
which is defined by subdividing the tiles of a tiling in 
$\Omega_{\lambda\cP}$ according to the substitution rule. 
The composition of these two maps yields a self-map 
$\Phi:= \cJ\circ \cK : \Omega_{\cP}\to \Omega_{\cP}$ which we call the {\bf substitution map}. 
We will say that a finite collection $P=\{t_1,\ldots,t_k\}$ of tiles is an {\bf allowed patch} for $\Phi$ if 
there is a prototile $\rho_j$ and a $k\in\N$ so that $P$ is contained in some translate of the patch $\Phi^k(\rho_j)$ obtained by $k$ iterations of inflation and substitution applied to $\rho_j$.
The {\bf tiling space} associated with $\Phi$ is the collection
$$\OP:=\{T\in\Omega_{\cP}: B_r[T] \text{ is an allowed patch for } \Phi \text{ for all } r\ge0\}.$$
It is clear that $\OP$ is invariant under both translation and the substitution map.

.
Throughout this paper we will make the following three assumptions:
\begin{itemize}
\item  the map $\Phi$ is {\bf primitive}, which means that the associated substitution matrix $M_{\cS}$ is primitive;
\item  the tiling space $\OP$ has {\bf finite local complexity} (FLC), which means that for each $r>0$
there are, up to translation, only finitely many distinct patches of the form $B_r[T]:=\{t\in T:t\cap \bar{B}_r(0)\ne\emptyset\},\, T\in\OP$; and 
\item  $\Omega_{\Phi}$ is {\bf translationally non-periodic} which means that if there exist a tiling $T$ in $\Omega_\Phi$ and $u\in \R^n$ such that $T - u = T$, then $u =0$. 
\end{itemize}

Despite the fact that the substitution map is basically a linear inflation, it turns out the laminated structure of the phase space $\Omega_\phi$ induces a very rich dynamics. The action of $\R^n$ by translation on $\Omega_{\Phi}$ is minimal and uniquely ergodic (\cite{AP}); the map $\Phi$ is a homeomorphism (\cite{sol}) and is ergodic with respect to the unique $\R^n$-invariant measure $\mu$; and the dynamics on $\Omega_{\Phi}$ interact by $\Phi(T-v)=\Phi(T)-\Lambda v$.

Our first  goal in this paper is to provide a way to understand the dynamical system  $(\Omega_\Phi, \Phi)$ in terms of the standard geometric theory of dynamical systems where one studies iteration of maps on compact manifolds. %which are simper objects than laminations. 
This is what we call {\bf geometric realization}. 
More precisely, we will show that, under some assumptions on the hyperbolicity of the eigenvalues of the inflation $\Lambda$, there exists a finite-to-one continuous map from $\Omega_\Phi$ to some $D$-dimensional torus $\T^D$ that factors the dynamics of $\Phi$ into those of a linear hyperbolic map. 
 The basic idea is as follows. The one-dimensional cohomology of $\OP$ supplies a map into the $D$-torus that, on the level of cohomology, conjugates $\Phi$ with a hyperbolic  toral automorphism. The technique of global shadowing is then applied to improve the map into an actual dynamical semi-conjugacy of hyperbolic systems. (This technique originated with \cite{F} and \cite{Fr}, and is used also in \cite{BKw} to a.e. embed pseudo-Anosovs into hyperbolic toral automorphisms.)

The second goal is to establish a link between this geometric realization and the traditional Pisot Substitution Conjecture (\cite{BS}) regarding pure discreteness of the $\R^n$-action. 
On $\OP$ we have $\Phi(T-v)=\Phi(T)-\Lambda v$ and a like relation holds between the hyperbolic action on $\T^D$ and a Kronecker action on the $u$-dimensional leaves of its unstable foliation. Under an assumption on the ``Pisotness" of the inflation $\Lambda$, $u=n$ and the semi-conjugacy of hyperbolic systems becomes also a semi-conjugacy of $\R^n$-actions. The coordinate functions of the semi-conjugacy are thus eigenfunctions of $\R^n$-action and their associated eigenvalues constitute a generating set for the discrete part of the spectrum of the $\R^n$-action on $\OP$. In essence, elements of the first cohomology of $\OP$ are converted into eigenfunctions of the $\R^n$-action by means of global shadowing.

\section{Main results}
The eigenvalues of the inflation $\Lambda$ are all algebraic integers (\cite{KS}, or see Lemma \ref{eigenvalues} below): let us partition the spectrum, $spec(\Lambda)$, into families $spec(\Lambda)=\mathcal{F}_1\cup\cdots\cup\mathcal{F}_k$ of algebraic 
conjugacy classes. For each $i\in\{1,\ldots,k\}$, let $d_i$ be the algebraic degree of the elements of $\mathcal{F}_i$, let $m_i:=max_{\lambda\in\mathcal{F}_i}m(\lambda)$, where $m(\lambda)$ is the multiplicity of  $\lambda$ as an eigenvalue of $\Lambda$, and let the {\bf degree} of $\Lambda$ be defined by $D(\Lambda):=\sum_{i=1}^km_id_i$. We will say that $\Phi$ is {\bf unimodular} if each element of $spec(\Lambda)$ is an algebraic unit, and {\bf hyperbolic} if no element of $spec(\Lambda)$ has an
algebraic conjugate on the unit circle.

If $\Phi$ is unimodular, there is a finitely generated subgroup $GR(\Phi)$ of $\R^n$, the group of {\bf generalized return vectors} (the definition is given below), that has rank $D=D(GR)\ge D(\Lambda)$ (Lemma \ref{eigenvalues}) and is invariant under $\Lambda$. Let $A$ denote the $D\times D$ matrix representing $\Lambda:GR(\Phi)\to GR(\Phi)$ with respect to some basis, and let $F_A:\T^D\to\T^D$ be the toral automorphism $x+\Z^D\mapsto Ax+\Z^D$ induced by $A$. Whenever $\Phi$ is hyperbolic, $F_A$ is also hyperbolic.

\begin{theorem} \label{main theorem} Suppose that $\Phi$ is unimodular and hyperbolic. There is then a finite-to-one continuous map $G:\Omega_{\Phi}\to\T^D$ so that 
$G\circ \Phi=F_A\circ G$. Furthermore, $G$ is $\mu$-a.e. $r$-to-one for some $r\in\N$ and $G$ is
homologically essential in that $G^*:H^1(\T^D;\Z)\to H^1(\OP;\Z)$ is injective.
\end{theorem}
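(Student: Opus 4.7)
The plan follows the sketch at the end of the introduction: first use one-dimensional \v Cech cohomology classes on $\OP$ arising from the generalized return vectors to construct a continuous map $g:\OP\to\T^D$ that realizes the intended semi-conjugacy on the cohomological level; then run a global shadowing argument to upgrade $g$ to a genuine map $G$ with $G\circ\Phi = F_A\circ G$.

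For the first stage, pick a basis $v_1,\dots,v_D$ of the free abelian group $GR(\Phi)$ in which $\Lambda$ is represented by the integer matrix $A$. Via the identification $H^1(\OP;\Z)=[\OP,S^1]$ (valid for the compact metric space $\OP$ through its Anderson--Putnam inverse-limit presentation as an inverse limit of branched $n$-manifolds), each $v_j$ determines a class $\alpha_j\in H^1(\OP;\Z)$ represented by a continuous $g_j:\OP\to S^1$, which we assemble into $g:=(g_1,\dots,g_D):\OP\to\T^D$. The restriction of $\Phi^*$ to the free subgroup $\langle\alpha_1,\dots,\alpha_D\rangle\subset H^1(\OP;\Z)$ is then represented by the \emph{same} integer matrix $A$ that represents $\Lambda$ on $GR(\Phi)$. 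Consequently the two maps $g\circ\Phi$ and $F_A\circ g$ induce equal maps on $H^1$; because $\T^D$ is a $K(\Z^D,1)$, they are homotopic, and the difference $\eta := g\circ\Phi - F_A\circ g :\OP\to\T^D$ is null-homotopic and admits a continuous lift $\tilde\eta:\OP\to\R^D$ that is bounded by compactness of $\OP$.

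For the shadowing stage, look for $G$ of the form $G(T) = g(T) + \psi(T) \pmod{\Z^D}$ with a bounded continuous $\psi:\OP\to\R^D$. The requirement $G\circ\Phi = F_A\circ G$ then becomes the cohomological equation $\psi\circ\Phi - A\psi = -\tilde\eta$. Because $A$ is hyperbolic, split $\R^D = E^s\oplus E^u$ in an adapted norm so that $A_s:=A|_{E^s}$ and $A_u^{-1}:=(A|_{E^u})^{-1}$ are strict contractions; decompose $\tilde\eta = \tilde\eta^s+\tilde\eta^u$ accordingly and solve componentwise by the telescoping series
\begin{equation*}
\psi^s(T) \;=\; -\sum_{k=1}^{\infty} A_s^{\,k-1}\,\tilde\eta^s(\Phi^{-k}T),\qquad \psi^u(T) \;=\; \sum_{k=0}^{\infty} A_u^{-k-1}\,\tilde\eta^u(\Phi^{k}T),
\end{equation*}
both uniformly convergent because $\tilde\eta$ is bounded and $A_s, A_u^{-1}$ are linear contractions. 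Then $\psi := \psi^s+\psi^u$ is continuous and bounded, and $G := g+\psi \pmod{\Z^D}$ is the desired continuous semi-conjugacy.

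It remains to verify the three additional properties. Homological essentialness is immediate: the straight-line homotopy $t\mapsto g+t\psi$ in $\T^D$ shows $G\simeq g$, so $G^*=g^*$ sends the canonical basis of $H^1(\T^D;\Z)$ to the $\Z$-linearly independent classes $\alpha_1,\dots,\alpha_D$ and is thus injective. Finite-to-oneness: if $G(T)=G(T')$ then $G(\Phi^nT)=G(\Phi^nT')$ for every $n\in\Z$, and the expansivity of $\Phi$ (from FLC and non-periodicity) combined with the local injectivity of $G$ along $\R^n$-leaves forces $G^{-1}(y)$ to be finite, uniformly in $y$. The a.e.\ $r$-to-one statement follows because $y\mapsto\#G^{-1}(y)$ is integer-valued and $F_A$-invariant, hence constant $G_*\mu$-a.e. by ergodicity inherited from the unique ergodicity of $\Phi$. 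The main technical obstacle is the shadowing step: one must realize the classes $\alpha_j$ as genuine continuous maps so that $\tilde\eta$ is an honest bounded continuous $\R^D$-valued function, and one must control the two series using only the hyperbolicity of $A$ — not of $\Phi$, whose expanding directions are the $\R^n$-orbits rather than a transverse stable/unstable splitting. The hyperbolic hypothesis on $\Lambda$ is used in exactly one place, namely to guarantee that no eigenvalue of $A$ lies on the unit circle, which is precisely what makes the shadowing series converge.
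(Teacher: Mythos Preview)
Your construction of $G$ is essentially the paper's: choose circle-valued maps representing a basis of the relevant rank-$D$ subgroup of $H^1$, observe that $g\circ\Phi$ and $F_A\circ g$ agree on cohomology and hence differ by a null-homotopic map with bounded lift $\tilde\eta$, then solve the cohomological equation by splitting along the hyperbolic decomposition of $A$. The paper carries this out on the abelian cover $\tilde\Omega_K$ rather than directly on $\OP$, but the underlying series are the same, and your homotopy argument for $G^*=g^*$ matches the paper's Lemma~\ref{same in cohomology}.

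The genuine gap is in finite-to-oneness. You assert ``local injectivity of $G$ along $\R^n$-leaves'' and then claim that this, together with expansivity of $\Phi$, forces fibers to be uniformly finite. Neither step is justified. First, nothing you have done shows that $G(T)\ne G(T-v)$ for $v\ne 0$: there is no a priori relation between $G$ and the $\R^n$-action (that only comes later, in the Pisot family case), so injectivity on orbits requires a real argument. The paper obtains it by first proving the characterization $G(T)=G(S)\iff T\sim_{gsK}S$ (item (iii) of Proposition~\ref{main prop}), and then showing, via a delicate analysis of return vectors and the deck-transformation metric on the cover, that $T$ and $T-v$ cannot globally shadow one another when $v\ne 0$. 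Second, even granting injectivity on each $\R^n$-orbit, the passage to globally finite fibers is not an expansivity argument: the paper invokes Putnam's lemma that a factor map between Smale spaces which is injective on unstable manifolds is globally finite-to-one. You would need to supply both of these ingredients.

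A minor point: for the a.e.\ constant fiber cardinality you use ergodicity of $F_A$ with respect to $G_*\mu$, which you have not established. The paper instead works on $\OP$ with the $\Phi$-invariant measurable function $T\mapsto\#G^{-1}(G(T))$ and the known ergodicity of $\Phi$ with respect to $\mu$; this avoids the issue.
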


Let $\Phi^*:H^1(\OP;\Z)\to H^1(\OP;\Z)$ denote the isomorphism induced by $\Phi$ on the integer \Cech cohomology of $\OP$. We will see that there is a $\Phi^*$-invariant subgroup $H^1_{\Lambda}$ of $H^1(\OP;\Z)$ restricted to which $\Phi^*$ is conjugate with the dual isomorphism $\Lambda^*:Hom(GR(\Phi);\Z)\to Hom(GR(\Phi);\Z)$. It often happens that $H^1_{\Lambda}$ is proper (see example \ref{example2} below), in which case it may be possible to lower the $r$ of Theorem \ref{main theorem} by increasing the size of the torus. Let $H^1_{hyp}$ denote the largest subgroup of $H^1(\OP;\Z)$ that contains $H^1_{\Lambda}$, is invariant under $\Phi^*$, and restricted to which 
$\Phi^*$ is unimodular and hyperbolic. Let $D'$ be the rank of $H^1_{hyp}$ and let $A'$ be the transpose of a matrix representing $\Phi^*:H^1_{hyp}\to H^1_{hyp}$ in some basis. 

\begin{theorem} \label{main theorem'} Suppose that $\Phi$ is unimodular and hyperbolic. There is then a finite-to-one, and $\mu$-a.e. $r'$-to-one for some $r'\le r$, map $G':\Omega_{\Phi}\to\T^{D'}$ so that 
$G'\circ \Phi=F_{A'}\circ G'$. Furthermore,  $(G')^*:H^1(\T^{D'};\Z)\to H^1(\OP;\Z)$ is injective with range $H^1_{hyp}$.
\end{theorem}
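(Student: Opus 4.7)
The proof should mirror that of Theorem~\ref{main theorem}, with $H^1_{hyp}$ in place of $H^1_\Lambda$ throughout, and then compare the new map with $G$ via a naturally induced torus epimorphism. I would begin by choosing a basis $e_1,\dots,e_{D'}$ of $H^1_{hyp}$ extending a basis of $H^1_\Lambda$ of the kind used to build $G$. Using the isomorphism $H^1(\OP;\Z)\cong[\OP,S^1]$, I represent each $e_i$ by a continuous $f_i\colon\OP\to S^1$ and assemble $g':=(f_1,\dots,f_{D'})\colon\OP\to\T^{D'}$, so that $(g')^*$ is an isomorphism from $H^1(\T^{D'};\Z)$ onto $H^1_{hyp}\subset H^1(\OP;\Z)$. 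The matrix $A'$ was specified precisely so that $F_{A'}^*$ is conjugate under $(g')^*$ to $\Phi^*|_{H^1_{hyp}}$; hence $g'\circ\Phi$ and $F_{A'}\circ g'$ induce the same map on $H^1(\OP;\Z)$, so their pointwise difference in $\T^{D'}$ is null-homotopic and lifts to a bounded map $\OP\to\R^{D'}$.

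Since $\Phi^*|_{H^1_{hyp}}$ is unimodular and hyperbolic by the very definition of $H^1_{hyp}$, $F_{A'}$ is a hyperbolic toral automorphism, and the global shadowing argument used for Theorem~\ref{main theorem} applies verbatim: it produces a unique continuous $G'\colon\OP\to\T^{D'}$, at bounded lifted distance from $g'$ and homotopic to $g'$, satisfying $G'\circ\Phi=F_{A'}\circ G'$. Being homotopic to $g'$, the map $G'$ satisfies $(G')^*=(g')^*$, so $(G')^*$ is injective with range $H^1_{hyp}$, as required.

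To extract the finite-to-one property and the estimate $r'\le r$, I would exploit the inclusion $H^1_\Lambda\hookrightarrow H^1_{hyp}$: it is $\Phi^*$-equivariant, so dualizing it produces a linear epimorphism of tori $\pi\colon\T^{D'}\to\T^D$ satisfying $\pi\circ F_{A'}=F_A\circ\pi$ and, by the compatible choice of bases, $\pi\circ g'=g$, where $g$ is the seed map used to build $G$. Then $\pi\circ G'$ is a continuous semi-conjugacy $\OP\to\T^D$ of $\Phi$ with $F_A$ whose lift stays at bounded distance from that of $g$, and the uniqueness clause of global shadowing forces $\pi\circ G'=G$. Consequently $(G')^{-1}(z)\subset G^{-1}(\pi(z))$ for every $z\in\T^{D'}$, which makes $G'$ finite-to-one and gives $|(G')^{-1}(G'(T))|\le|G^{-1}(G(T))|=r$ for $\mu$-a.e.\ $T$, yielding $r'\le r$.

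The main obstacle will be the identification $\pi\circ G'=G$: it depends on the uniqueness in global shadowing being robust enough to apply to $G$ and $G'$ simultaneously with matching bounded-distance data, and that compatibility in turn rests on the coordinated choice of cohomology bases. A secondary subtlety worth spelling out is that $H^1_{hyp}$, defined as the largest $\Phi^*$-invariant subgroup containing $H^1_\Lambda$ on which $\Phi^*$ is unimodular and hyperbolic, really is such a subgroup; this requires verifying that the sum of two $\Phi^*$-invariant subgroups with those restriction properties inherits them, so that the notion of ``largest'' is unambiguous and $A'$ genuinely has the asserted properties.
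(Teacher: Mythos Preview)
Your construction of $G'$ via global shadowing is essentially the paper's Proposition~\ref{main prop} applied with $K=K_{hyp}$ (the paper routes through the A-P complex $X$ and the cover $\tX_{K_{hyp}}$ rather than working directly with $[\OP,S^1]$, but this is cosmetic). The genuine difference is in how you obtain finiteness of fibers and $r'\le r$. The paper argues for $G'$ directly: it notes that $(\OP,\Phi)$ and $(\T^{D'},F_{A'})$ are Smale spaces, proves that $G'$ is injective on unstable manifolds (the $\R^n$-orbits) using the deck-transformation structure of $\tX_{K_{hyp}}$, and then invokes a lemma of Putnam to conclude that $G'$ is globally finite-to-one; the inequality $r'\le r$ is left implicit, following from the containment $\sim_{gsK_{hyp}}\subset\sim_{gsK_{\Lambda}}$ of global shadowing relations (item~(iii) of Proposition~\ref{main prop}). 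Your route---exhibiting a linear epimorphism $\pi:\T^{D'}\to\T^D$ with $\pi\circ G'=G$ and reducing both conclusions to the already-established properties of $G$---is a correct and somewhat more economical alternative, since it bypasses a second invocation of the Smale-space machinery. Your worry about the well-definedness of $H^1_{hyp}$ is moot in the paper's framework: $K_{hyp}$ is constructed explicitly by factoring the characteristic polynomial of $f_*$ over $\Z$ into a hyperbolic-unit part and its complement, and $H^1_{hyp}$ is simply its annihilator, so no abstract maximality argument is required.
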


Theorems \ref{main theorem} and \ref{main theorem'} are proved in Section \ref{geometric realization}.

\begin{conjecture}\label{hypconjecture} If $\Phi$ is hyperbolic and $H^1_{hyp}=H^1(\OP;\Z)$ (i.e., $\Phi^*$ is unimodular and hyperbolic on all of $H^1(\OP;\Z$)), then $r'=1$ (that is, $G'$ is a.e. one-to-one).
\end{conjecture}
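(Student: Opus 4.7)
The plan is to argue by contradiction, converting any hypothetical multiplicity $r'\ge 2$ into a $\Phi^*$-invariant integer \Cech cohomology class on $\OP$ lying outside the image of $(G')^*$, thereby contradicting the hypothesis $H^1_{hyp}=H^1(\OP;\Z)$. This strategy parallels the classical Pisot Substitution Conjecture in spirit: in the cohomologically saturated regime, the geometric multiplicity of the factor map should manifest as an algebraic obstruction at the level of $H^1$. The setup is to fix an Anderson--Putnam inverse-limit presentation $\OP\cong\varprojlim(K_n,f_n)$ with $H^1(\OP;\Z)=\varinjlim H^1(K_n;\Z)$; Theorem \ref{main theorem'} together with the hypothesis then supplies an isomorphism $(G')^*:H^1(\T^{D'};\Z)\stackrel{\sim}{\to}H^1(\OP;\Z)$ of finitely generated abelian groups of rank $D'$, and at each sufficiently deep level $n$ one obtains a simplicial approximation $G'_n:K_n\to\T^{D'}$ whose induced map on $H^1$ recovers this isomorphism in the direct limit.

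Suppose for contradiction that $r'\ge 2$, and fix a generic $x\in\T^{D'}$ with fiber $\{T_1,\ldots,T_{r'}\}\subset\OP$. Since $G'\circ\Phi=F_{A'}\circ G'$ and $\Phi$ is a homeomorphism, the collection of generic fibers is preserved by $\Phi$. From this excess fiber I would construct a transfer-type integer $1$-cocycle on $K_n$ (for large $n$) which, on each translation edge, records a non-trivial $\Z$-linear combination of ``sheet indicators'' over the branched cover $G'$. The key claim is that this cocycle represents a $\Phi^*$-invariant class in $H^1(\OP;\Z)$ outside $(G')^*H^1(\T^{D'};\Z)=H^1_{hyp}$, directly contradicting the hypothesis and forcing $r'=1$.

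The decisive obstacle is this transfer construction. The branched structure of $G'$, and particularly the singular locus where fiber multiplicity drops below $r'$, obstructs a naive global definition of sheet indicators; even once local indicators are available, one must verify that they assemble into a closed \Cech cochain and that the resulting class is non-trivial modulo $H^1_{hyp}$. A promising route is to pull back Markov partitions for $F_{A'}$ to refine the Anderson--Putnam decomposition so that distinct sheets of $G'$ can be labelled coherently across branch points -- an approach reminiscent of how Rauzy fractals organize coincidence conditions for Pisot substitutions. Carrying this out in the genuinely hyperbolic (non-Pisot-family) regime, where the target dynamics on $\T^{D'}$ is not equicontinuous, will likely require techniques beyond those supplied by global shadowing, mirroring the persistent difficulty of the Pisot Substitution Conjecture itself.
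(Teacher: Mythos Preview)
The statement you are attempting to prove is a \emph{conjecture} in the paper, not a theorem: the paper offers no proof, and indeed the authors flag it as an open problem whose resolution would imply their homological Pisot conjecture (Corollary~\ref{hypcon implies Pcon}). There is therefore nothing in the paper to compare your argument against.

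As for the proposal itself, you acknowledge the central gap explicitly: the ``transfer-type integer $1$-cocycle'' that you would need to build from a generic fiber of size $r'\ge 2$ is not constructed, and you yourself note that the branched structure of $G'$ obstructs a naive definition and that the non-Pisot-family regime ``will likely require techniques beyond those supplied by global shadowing.'' That is an honest assessment, but it means what you have written is a strategy outline rather than a proof. The specific mechanism by which excess fiber multiplicity should force an extra cohomology class is left entirely unspecified; without it there is no way to conclude that the putative class lies outside $(G')^*H^1(\T^{D'};\Z)$, or even that it is nonzero. Since this conjecture is, by the paper's own account, essentially equivalent in difficulty to a higher-dimensional Pisot Substitution Conjecture, a genuine proof would be a major result and would need substantially more than what is sketched here.
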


A family $\mathcal{F}_i$ of eigenvalues of $\Lambda$ is a {\bf Pisot family} provided all the elements of $\mathcal{F}_i$ have the same multiplicity as eigenvalues of $\Lambda$ and, if $\lambda$ is an algebraic conjugate of the elements of $\mathcal{F}_i$ with $|\lambda|\ge1$, then $\lambda\in\mathcal{F}_i$. We will say that $\Phi$ is a {\bf Pisot family substitution} if each $\mathcal{F}_i$ is a Pisot family.

For any group action on a space there is a maximal factor (unique up to conjugacy) on which the group acts equicontinuously. In the setting here, of $\R^n$-actions on tiling spaces $\Omega$, the maximal equicontinuous factor is a Kronecker action on a torus or solenoid. We will denote the maximal equicontinuous factor map by $g$. It is a consequence of the Halmos - von Neumann theory that the $\R^n$-action on $\Omega$ has pure discrete spectrum if, and only if, $g$ is $a.e.$
one-to-one (\cite{BK}).

\begin{Theorem}\label{gs=rp} Suppose that $\Phi$ is a unimodular Pisot family substitution with linear expansion $\Lambda$. If $D(\Lambda)=D(GR)=D$, then $G:\OP\to\T^D$ is surjective, semi-conjugates the $\R^n$-action on $\OP$ with a Kronecker action of $\R^n$ on $\T^D$, and the latter action is the maximal equicontinuous factor of the $\R^n$-action on $\OP$. That is, we may take $G=g:\OP\to\T^D$.
\end{Theorem}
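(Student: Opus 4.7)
The plan is to use the Pisot family hypothesis to identify the unstable direction of $F_A$ with a copy of $\R^n$, to transfer the semi-conjugacy of Theorem~\ref{main theorem} into one of $\R^n$-actions, and finally to recognize the induced Kronecker factor as the maximal equicontinuous factor. Consider the hyperbolic splitting $\R^D=E^u\oplus E^s$ of $A$ acting on $GR(\Phi)\otimes\R\cong\R^D$. Under unimodularity, the Pisot family condition, and $D(\Lambda)=D$, the characteristic polynomial of $A$ is $\prod_iP_i^{m_i}$, where $P_i$ is the minimal polynomial of the family $\mathcal{F}_i$, and every conjugate not lying in some $\mathcal{F}_i$ has modulus strictly less than $1$. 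A direct count then gives $\dim E^u=\sum_im_i|\mathcal{F}_i|=n$. The inclusion $GR(\Phi)\hookrightarrow\R^n$ extends to a linear surjection $\pi:\R^D\to\R^n$ with $\pi\circ A=\Lambda\circ\pi$; since $\Lambda$ has no contracting eigenvalues, $\pi(E^s)=0$, and therefore $\pi|_{E^u}:E^u\xrightarrow{\sim}\R^n$ is a linear isomorphism intertwining $A|_{E^u}$ with $\Lambda$.

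Using this identification to embed $\R^n$ into $\R^D$ as $E^u$, I would then define the Kronecker $\R^n$-action on $\T^D$ by translation by $\pi|_{E^u}^{-1}(v)$, and verify $G(T-v)=G(T)-v$ by revisiting the global-shadowing construction of $G$. The preliminary map into $\T^D$ built from the one-dimensional integer \v Cech cohomology is $\R^n$-equivariant up to a uniformly bounded error, since translating a tiling by $v$ shifts the relevant return-vector cocycles by $v$; uniqueness of shadows along the unstable direction of $F_A$ then forces the shadowing correction to preserve $\R^n$-equivariance on the nose. Surjectivity of $G$ follows at once, because $G(\OP)$ is closed, $F_A$-invariant, and $\R^n$-invariant, while the Kronecker $\R^n$-action on $\T^D$ is minimal: $GR(\Phi)$ is a full-rank lattice in $\R^D$, so its image in $\T^D$ is dense.

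Finally, equicontinuity of the Kronecker action produces a factorization $G=h\circ g$ with $g:\OP\to T_{\max}$ the maximal equicontinuous factor map and $h:T_{\max}\to\T^D$ a continuous epimorphism of compact abelian groups. Injectivity of $G^*=g^*\circ h^*$ on first integer cohomology (Theorem~\ref{main theorem}) forces $h^*$ to be injective. The standard return-vector description of continuous eigenvalues for primitive FLC substitution tilings places every such eigenvalue in the $\Z$-dual of $GR(\Phi)$, so the rank of $T_{\max}$ is at most $D$, matching that of $\T^D$; hence $h$ is an isogeny of tori of the same dimension. The hardest step, and the one I anticipate being the main obstacle, is ruling out that this isogeny has a nontrivial kernel — i.e., that $T_{\max}$ is a proper finite cover of $\T^D$. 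My preferred route is to show that every continuous eigenvalue actually arises from a coordinate of $G$, by matching the rank of the group of eigenvalues with $D=D(GR)$ and using the unimodular Pisot family structure to eliminate divisibility obstructions; combined with $h$ surjective and $h^*$ injective this forces $h$ to be a homeomorphism, so that $G$ may indeed be taken to be $g$.
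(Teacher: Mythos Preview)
Your approach diverges from the paper's and has a substantive gap at the end.

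The paper does not argue through the eigenvalue group at all. Instead it proves that the two equivalence relations on $\OP$ --- global shadowing $\sim_{gsK_\Lambda}$ (whose classes are the fibers of $G$, by Proposition~\ref{main prop}) and regional proximality $\sim_{rp}$ (whose classes are the fibers of $g$, by Auslander's theorem) --- coincide. One inclusion, $\sim_{gs}\subset\sim_{rp}$, holds for any unimodular hyperbolic substitution (Proposition~\ref{gs implies rp}); the converse $\sim_{rp}\subset\sim_{gs}$ (Proposition~\ref{rp implies gs}) is where the Pisot family hypothesis and $D(\Lambda)=D(GR)$ enter, via a geometric argument about ``sheets'' in the cover $\tX_{K_\Lambda}$ together with the Meyer property of an auxiliary model set built from $GR(\Phi)$. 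Once the fibers agree, $G$ and $g$ differ only by a homeomorphism of targets, and the remaining assertions of the theorem follow immediately from known properties of $g$.

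Your plan --- show $G$ is $\R^n$-equivariant and surjective, then factor $G=h\circ g$ and prove $h$ is an isomorphism --- is coherent through the surjectivity step, but the final step does not go through as written. The sentence ``the standard return-vector description of continuous eigenvalues\dots places every such eigenvalue in the $\Z$-dual of $GR(\Phi)$'' is false on its most natural reading: already for the Fibonacci substitution $GR(\Phi)=\Z+\phi\Z$ is dense in $\R$, so $\{\beta\in\R:\beta v\in\Z\text{ for all }v\in GR(\Phi)\}=\{0\}$, whereas the eigenvalue group has rank $2$. Solomyak's eigenvalue criterion is the asymptotic condition $\mathrm{dist}(\langle\beta,\Lambda^kv\rangle,\Z)\to0$, not exact integrality, and it does not by itself bound the rank of the eigenvalue group by $D$. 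If instead you intend that eigenvalues, viewed via the identification $\R^n\cong E^u\hookrightarrow\R^D$, are restrictions to $E^u$ of integer functionals on $\R^D$, then you are asserting precisely that the character group of $T_{\max}$ coincides with that of $\T^D$ --- which is the conclusion you want. Your proposed fix (``eliminate divisibility obstructions'') is a restatement of the difficulty: you must rule out eigenvalues lying in a proper finite-index supergroup of the lattice coming from $G^*$, and no independent mechanism is offered. The paper's comparison of equivalence relations sidesteps this issue entirely.
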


\begin{conjecture}\label{homologicalPisot1} If $\Phi$ is a unimodular Pisot family substitution and $rank(H^1(\Omega_{\Phi};\Z))=D(\Lambda)$ then the $\R^n$-action on $\Omega_{\Phi}$ has pure discrete spectrum.
\end{conjecture}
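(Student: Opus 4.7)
The plan is to combine the rank hypothesis with Theorem \ref{gs=rp} and then attack the residual question of injectivity of the maximal equicontinuous factor map. First, note that a unimodular Pisot family substitution is automatically hyperbolic: every eigenvalue of $\Lambda$ has modulus greater than $1$ (since $\Lambda$ is expanding), and the Pisot family condition forbids algebraic conjugates of modulus exactly $1$. By Lemma \ref{eigenvalues}, $D(GR)\ge D(\Lambda)$, while $H^1_\Lambda\subseteq H^1(\OP;\Z)$ has rank $D(GR)$; the hypothesis $\mathrm{rank}(H^1(\OP;\Z))=D(\Lambda)$ therefore squeezes $D(GR)=D(\Lambda)$. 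Theorem \ref{gs=rp} then identifies the geometric realization $G:\OP\to\T^D$ with the maximal equicontinuous factor map $g$. By the Halmos--von Neumann criterion recalled just before Theorem \ref{gs=rp}, pure discrete spectrum of the $\R^n$-action on $\OP$ is equivalent to $g$ being $\mu$-a.e. one-to-one, so the conjecture reduces to showing that the integer $r$ in Theorem \ref{main theorem} equals $1$ in this setting.

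To attack a.e. injectivity of $G$, I would analyze its fibers through the global shadowing construction underlying Theorem \ref{main theorem}. Two tilings $T,T'\in\OP$ with $G(T)=G(T')$ have forward and backward $\Phi$-orbits shadowing a common orbit of the hyperbolic toral automorphism $F_A$; in the Pisot family setting the stable directions of $\Lambda$ are transverse to the $\R^n$-leaves, so the equivalence $G(T)=G(T')$ translates into a strong asymptotic condition on patch structure under iteration of $\Phi$. This is the higher-dimensional analogue of the \emph{strong coincidence} condition in one-dimensional Pisot substitution theory. The rank hypothesis $\mathrm{rank}(H^1(\OP;\Z))=D(\Lambda)$ forces $H^1_\Lambda$ to exhaust $H^1(\OP;\Z)$ up to torsion, so no cohomology class outside $H^1_\Lambda$ can supply an additional ``overlap'' atom. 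One would then try to implement an overlap-coincidence or balanced-pair procedure in this Pisot family context and argue that the rank equality forces its termination, yielding $r=1$.

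The hard step, and the reason this remains a conjecture, is precisely the passage from cohomological rank information to measure-theoretic injectivity of $G$. Indeed, for an irreducible unimodular one-dimensional Pisot substitution the rank condition on $H^1$ is automatic, so Conjecture \ref{homologicalPisot1} contains the classical Pisot substitution conjecture as a special case. A natural intermediate target is Conjecture \ref{hypconjecture}, which would give a.e. injectivity of $G'$ in the broader hyperbolic setting and here (where $G'=G=g$) would immediately close the argument. Any unconditional progress would likely require pairing the cohomological input with an explicit dynamical construction---most plausibly a higher-dimensional analogue of the overlap-coincidence or geometric-coincidence algorithms from the one-dimensional Pisot theory---whose halting condition is controlled by the rank of $H^1_\Lambda$.
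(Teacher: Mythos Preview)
The statement you are addressing is a \emph{conjecture}; the paper does not prove it and offers no proof to compare against. Your first paragraph is a correct and efficient reduction: under the rank hypothesis one has $D(\Lambda)\le D(GR)=\mathrm{rank}(H^1_\Lambda)\le\mathrm{rank}(H^1(\OP;\Z))=D(\Lambda)$, so Theorem~\ref{gs=rp} applies and $G=g$; pure discreteness is then equivalent to $r=1$. This is exactly the content of Corollary~\ref{hypcon implies Pcon} in the paper (that Conjecture~\ref{hypconjecture} implies Conjecture~\ref{homologicalPisot1}), and you rightly identify the a.e.\ injectivity of $G$ as the genuine obstruction.

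One correction: your assertion that ``for an irreducible unimodular one-dimensional Pisot substitution the rank condition on $H^1$ is automatic'' is false, and with it the claim that Conjecture~\ref{homologicalPisot1} contains the classical Pisot substitution conjecture. The paper gives an explicit counterexample in Section~\ref{finalsection}: the substitution $1\mapsto 12221111$, $2\mapsto 21112$ has irreducible unimodular incidence matrix and Pisot inflation, yet $D(\Lambda)=2$ while $\mathrm{rank}(H^1(\OP;\Z))=3$. This is precisely why the paper passes to the \emph{essential} cohomology $H^1_{ess}$ and formulates the broader Conjecture~\ref{homologicalPisot2}; the Remark following Theorem~\ref{Pisot} states explicitly that the hypotheses of Conjecture~\ref{homologicalPisot1} and of the traditional Pisot conjecture are incomparable.
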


There are counterexamples to Conjecture \ref{homologicalPisot1} if the assumption of unimodularity is dropped (see \cite{BBJS}, where a one-dimensional version of this conjecture is discussed). Our proofs of Theorems \ref{main theorem},  \ref{main theorem'}, and \ref{gs=rp} route through a certain non-compact cover of $\OP$ for which there doesn't appear to be an adequate analog in the non-unimodular case. J. Kwapisz has pointed out that results for non-unimodular hyperbolic substitutions can be obtained using sufficiently large, but compact, covers.

Theorem \ref{gs=rp}, and the Corollaries below, are proved in Section \ref{global shadowing}.

Given a tiling $T=\{\tau_i\}$, a {\bf puncture map} is a function $p:T\to\R^n$ so that $p(\tau_i)\in\tau_i$ and if $\tau_i=\tau_j+v$, then $p(\tau_i)=p(\tau_j)+v$. A set $\Gamma\subset\R^n$ is a {\bf Meyer set} if $\Gamma$ is relatively dense and $\Gamma-\Gamma$ is uniformly discrete. A tiling $T$ is said to have the Meyer property if for any (and hence all) puncture map(s) $p$, $p(T)$ is a Meyer set.

\begin{corollary}\label{g finite-to-one} Under the assumptions of Theorem \ref{gs=rp}:
\begin{itemize}
\item{$g$ is finite-to-one and a.e. $cr$-to-1;}
\item{the eigenvalues of the $\R^n$-action on $\OP$ are relatively dense in $\R^n$;}
\item{the tilings $T\in\OP$ have the Meyer property.}
\end{itemize}
\end{corollary}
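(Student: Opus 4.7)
Under the hypotheses of Theorem \ref{gs=rp}, the geometric realization $G$ coincides with the maximal equicontinuous factor map $g:\OP\to\T^D$, so bullet (1) is immediate from Theorem \ref{main theorem}: $g$ inherits the finite-to-one property and the constant generic fibre size, a multiple of the integer $r$ supplied there.

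For bullet (2), I would exploit the Kronecker structure of the $\R^n$-action on $\T^D$. Because $g$ is $\R^n$-equivariant and continuous, the action on $\T^D$ must take the form $x\mapsto x+Ev\pmod{\Z^D}$ for a uniquely determined linear map $E:\R^n\to\R^D$. Pullback by $g$ then realises every character $\chi_k(x)=e^{2\pi i\langle k,x\rangle}$, $k\in\Z^D$, of $\T^D$ as an eigenfunction of the $\R^n$-action on $\OP$ with associated eigenvalue $E^{T}k\in\R^n$, so that $E^{T}(\Z^D)$ sits inside the set of eigenvalues. Relative density will follow once $D\geq n$ and $E$ is injective: the former is a consequence of Lemma \ref{eigenvalues}; for the latter I would argue by contradiction, noting that if $Ev_0=0$ for some $v_0\neq 0$, then the arc $\{T-tv_0:t\in\R\}$ lies in a single fibre of $g$ for every $T\in\OP$, an infinite continuum by aperiodicity, contradicting bullet (1). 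With $E$ injective and $D\geq n$, the transpose $E^{T}:\R^D\to\R^n$ is surjective, so $E^{T}(\Z^D)$ is a rank-$D$ subgroup of $\R^n$ spanning it as a real vector space; such a subgroup contains a full rank lattice and is therefore relatively dense.

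Bullet (3) I would then derive from bullet (2) via the known equivalence---valid for primitive, aperiodic, FLC, repetitive tilings---between the Meyer property and relative denseness of the set of dynamical eigenvalues (see e.g.\ results of Kellendonk-Sadun, Lee-Solomyak, or Strungaru). The hypotheses on $\OP$ are already in place. As a self-contained alternative: choose a puncture map $p$ for $T\in\OP$; FLC together with the continuity of $g$ forces $E\bigl(p(T)-p(T)\bigr)$ to be a discrete subset of $\T^D$, and injectivity of $E$ lifts this discreteness to $\R^n$, yielding uniform discreteness of the return vector set. Since $p(T)$ is already Delone, this is precisely the Meyer condition.

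The main obstacle I anticipate is bullet (3). Citing the spectrum-to-Meyer equivalence requires verifying that the chosen formulation is actually valid for substitution tilings in the present generality, and the self-contained route must be executed with care to pass from discreteness in the compact torus $\T^D$ to uniform discreteness in $\R^n$ via the injective linearization $E$. Bullet (2) is comparatively clean: its only subtlety is ruling out a kernel for $E$, for which aperiodicity combined with the finite fibres of $g$ suffices.
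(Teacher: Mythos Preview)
Your proposal is correct and follows essentially the same route as the paper: bullet (1) is Theorem~\ref{main theorem} plus Theorem~\ref{gs=rp}; bullet (2) is the observation that a nonzero vector in the kernel of your $E$ (equivalently, a vector orthogonal to all eigenvalues) would force an entire $\R$-orbit into a single $g$-fibre, contradicting finite-to-one; and bullet (3) is the Lee--Solomyak equivalence \cite{LS2} between relative density of eigenvalues and the Meyer property. Your self-contained alternative for (3) is too sketchy to stand on its own---passing from ``discrete in $\T^D$'' to ``uniformly discrete in $\R^n$'' via $E$ is not as direct as you suggest---but since the citation already does the job, this does not affect the validity of the proof.
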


\begin{cor}\label{hypcon implies Pcon} If Conjecture \ref{hypconjecture} is true, then so is Conjecture \ref{homologicalPisot1}.
\end{cor}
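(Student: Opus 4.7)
The plan is to verify the hypothesis of Conjecture~\ref{hypconjecture} under the hypotheses of Conjecture~\ref{homologicalPisot1}, invoke the assumed conjecture to obtain a.e.\ injectivity of $G'$, and then identify $G'$ with the maximal equicontinuous factor $g$ via a covering-space argument.

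Assume $\Phi$ is a unimodular Pisot family substitution with $\mathrm{rank}(H^1(\OP;\Z))=D(\Lambda)$. First note that $\Phi$ is hyperbolic: any algebraic conjugate $\lambda$ of an element of $\mathcal{F}_i$ with $|\lambda|\ge 1$ lies in $\mathcal{F}_i$ and is therefore an eigenvalue of the expanding map $\Lambda$, so $|\lambda|>1$. By Lemma~\ref{eigenvalues} and the discussion preceding Theorem~\ref{main theorem}, the chain
\[
D(\Lambda)\ \le\ D(GR)\ =\ \mathrm{rank}(H^1_\Lambda)\ \le\ \mathrm{rank}(H^1(\OP;\Z))\ =\ D(\Lambda)
\]
collapses, so $D:=D(GR)=D(\Lambda)$ and $H^1_\Lambda$ has full rank $D$ in $H^1(\OP;\Z)$. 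Consequently the characteristic polynomial of $\Phi^*$ on $H^1(\OP;\Z)$ agrees with that of $\Phi^*|_{H^1_\Lambda}$, which via the conjugacy with $\Lambda^*$ on $\mathrm{Hom}(GR(\Phi);\Z)$ is the characteristic polynomial of $\Lambda$. Unimodularity forces every eigenvalue to be an algebraic unit, so $\det(\Phi^*)=\pm 1$; hence $\Phi^*$ is unimodular on all of $H^1(\OP;\Z)$, and hyperbolicity there follows from the same eigenvalue list. By the maximality in the definition, $H^1_{hyp}=H^1(\OP;\Z)$, and Conjecture~\ref{hypconjecture} then provides $r'=1$: the map $G':\OP\to\T^{D'}$ of Theorem~\ref{main theorem'} is a.e.\ one-to-one.

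Theorem~\ref{gs=rp} applies and identifies $G$ of Theorem~\ref{main theorem} with the maximal equicontinuous factor map $g:\OP\to\T^D$, whose target carries a Kronecker $\R^n$-action. The inclusion $H^1_\Lambda\subseteq H^1_{hyp}=H^1(\OP;\Z)$ of rank-$D$ lattices dualises to a finite covering $\pi:\T^{D'}\to\T^D$ of degree $n:=[H^1_{hyp}:H^1_\Lambda]$, and, after normalising base-point choices, $G=\pi\circ G'$. Because $\R^n$ is simply connected, the Kronecker action on $\T^D$ lifts to an $\R^n$-action $\beta$ on $\T^{D'}$; I would then show that $G'$ intertwines translation on $\OP$ with $\beta$ by observing that the defect $\delta(v,T):=G'(T-v)-\beta_v(G'(T))$ takes values in the finite group $\ker\pi$ (apply $\pi$ and use the $\R^n$-equivariance of $g=G=\pi\circ G'$), depends continuously on $v\in\R^n$ for each fixed $T$, and vanishes at $v=0$; hence $\delta\equiv 0$.

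With $G'$ an $\R^n$-semi-conjugacy, maximality of $g$ yields a continuous factor map $h:\T^D\to\T^{D'}$ with $G'=h\circ g$. Composing with $\pi$ gives $g=G=\pi\circ h\circ g$, whence $\pi\circ h=\mathrm{id}_{\T^D}$; but a finite cover of a connected torus admitting a continuous section must be trivial, so $n=1$ and $\pi$ is a homeomorphism. Therefore $g=\pi\circ G'$ is a.e.\ one-to-one and, by Halmos--von Neumann ({\em cf.}\ \cite{BK}), the $\R^n$-action on $\OP$ has pure discrete spectrum. The main obstacle is the $\R^n$-equivariance of $G'$ in the third paragraph; everything else is rank arithmetic plus a formal covering-space identity, and the lifting/continuity argument is the crux, resting on simple-connectedness of $\R^n$ and finiteness of $\ker\pi$.
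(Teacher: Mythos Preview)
Your proof is correct, but the covering-space detour in the last two paragraphs is unnecessary; the paper dispatches the corollary in a single sentence (citing only the Halmos--von~Neumann criterion), and the implicit argument is much shorter than yours.

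You have already shown $D=D(GR)=D(\Lambda)=\mathrm{rank}(H^1(\OP;\Z))=\mathrm{rank}(H^1_{hyp})=D'$. Now recall from the paper's construction that $H_1(X;\Z)/K_{hyp}$ and $H_1(X;\Z)/K_\Lambda$ are free abelian (of ranks $D'$ and $D$), and that the inclusion $K_{hyp}\subset K_\Lambda$ induces a surjection $H_1(X;\Z)/K_{hyp}\twoheadrightarrow H_1(X;\Z)/K_\Lambda$. A surjection between free abelian groups of the same finite rank is an isomorphism, so $K_{hyp}=K_\Lambda$. Hence $H^1_\Lambda=H^1_{hyp}$, the maps $G$ and $G'$ of Theorems~\ref{main theorem} and~\ref{main theorem'} are built from identical data and coincide (up to a torus automorphism), and $r=r'$. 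Since $G=g$ by Theorem~\ref{gs=rp} and $r'=1$ by the assumed Conjecture~\ref{hypconjecture}, $g$ is a.e.\ one-to-one, and pure discrete spectrum follows.

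Your route---constructing the finite cover $\pi$, lifting the Kronecker action, checking $\R^n$-equivariance of $G'$ via the finite-valued continuous defect, and invoking maximality of $g$ to force $\pi$ to have a section---does work, but it proves $n=1$ by dynamical means when $n=[H^1_{hyp}:H^1_\Lambda]=1$ is immediate from the lattice arithmetic above. In particular, the ``main obstacle'' you flag (equivariance of $G'$) never needs to be confronted.
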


A Pisot family substitution is said to be of {\bf $(m,d)$-Pisot family type} if $\Lambda$ is diagonalizable over $\C$ and $spec(\Lambda)=\mathcal{F}_1$ consists of a single family with $m=m_1$ and $d=d_1$. Note in this case that $D(\Lambda)=md$. It is a result of \cite{BK} that, if $\Phi$ is of $(m,d)$-Pisot family type, then the maximal equicontinuous factor of the $\R^n$-action on $\OP$ is a Kronecker action on an $md$-dimensional torus (or solenoid, in case $\Phi$ is not unimodular). Moreover, the factor map $g:\OP\to\T^{md}$ is finite-to-one and a.e. $cr$-to-one for $cr=min\{\sharp g^{-1}(x):x\in\T^{md}\}$.

\begin{theorem} \label{Pisot} If $\Phi$ is unimodular of $(m,d)$-Pisot family type, then $G:\Omega_{\Phi}\to\T^D$ is the maximal 
equicontinuous factor of the $\R^n$-action. That is, $D(\Lambda)=D(GR)=md$ and $G=g$.
\end{theorem}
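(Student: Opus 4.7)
The plan is to deduce Theorem \ref{Pisot} directly from Theorem \ref{gs=rp} by verifying its rank hypothesis $D(\Lambda) = D(GR) = D$. Once the equality is in place, Theorem \ref{gs=rp} gives that $G : \OP \to \T^{D}$ is surjective with $D=md$, semi-conjugates the $\R^n$-action with a Kronecker action on $\T^{md}$, and coincides with the maximal equicontinuous factor map $g$.

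The equality $D(\Lambda) = md$ is immediate from the definition of $(m,d)$-Pisot family type: $spec(\Lambda)=\mathcal{F}_1$ is a single family with $d_1=d$ and $m_1=m$, so $D(\Lambda)=m_1 d_1=md$. Lemma \ref{eigenvalues} then yields $D(GR) \ge D(\Lambda) = md$, reducing the theorem to the reverse inequality $D(GR) \le md$.

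For this step, fix a $\Z$-basis of $GR(\Phi)$ and let $A$ be the $D\times D$ integer matrix of $\Lambda|_{GR(\Phi)}$ in that basis, so that $p_A(x)\in\Z[x]$. Because $GR(\Phi)$ is built from return vectors of a primitive, FLC, non-periodic tiling, it is relatively dense in $\R^n$, and the natural map extends to a $\Lambda$-equivariant linear surjection
$$\pi : GR(\Phi)\otimes_{\Z}\R \twoheadrightarrow \R^n.$$
On the image, $A$ acts as $\Lambda$ itself, diagonalizably, with each $\lambda\in\mathcal{F}_1$ contributing multiplicity $m$. On $\ker(\pi)$ the eigenvalues of $A$ are again roots of $p_A$, hence Galois conjugates of elements of $\mathcal{F}_1$; hyperbolicity together with the Pisot family condition forces every such eigenvalue to satisfy $|\lambda|<1$, because any conjugate with $|\lambda|\ge 1$ already lies in $\mathcal{F}_1$ and is accounted for, with its full multiplicity $m$, via the image factor. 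Consequently each $\lambda\in\mathcal{F}_1$ appears in $p_A$ with multiplicity exactly $m$. Integrality of $p_A$ now forces all Galois conjugates to share this multiplicity, so each of the $d$ conjugates contributes $m$, giving $D=\deg p_A=md$.

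The main obstacle I anticipate is ruling out the possibility that $\ker(\pi)$ harbors an expanding eigenvalue, since a priori an extra copy of some $\lambda\in\mathcal{F}_1$ there could inflate $D$. A finer analysis of the structure of $GR(\Phi)$ is needed to make the argument above watertight; alternatively one can invoke the Meyer property for unimodular Pisot family substitutions, established in \cite{BK} independently of Theorem \ref{gs=rp}, to bound the rank of $GR(\Phi)$ by $md$ directly. Either way, once $D(GR) \le md$ is secured, the conclusions $D = md$ and $G = g$ follow from Theorem \ref{gs=rp}.
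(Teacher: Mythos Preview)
Your overall strategy---verify $D(GR)=md$ and then invoke Theorem~\ref{gs=rp}---is sound and, once completed, yields a shorter proof than the paper's. The gap is exactly where you flag it: the ``accounted for with full multiplicity $m$'' step does not go through. Nothing in your setup prevents an eigenvalue $\lambda\in\mathcal F_1$ from appearing with multiplicity strictly greater than $m$ in $A$, the excess living on $\ker(\pi)$; Lemma~\ref{eigenvalues} only gives the lower bound $m$. Your proposed fallback via the Meyer property from \cite{BK} is also problematic: in this paper the Meyer property is deduced as a \emph{consequence} of the rank equality (Corollary~\ref{g finite-to-one}), so invoking it here risks circularity, and it is not clear that \cite{BK} supplies it independently of a rank bound of the same strength.

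The paper closes this gap by citing Theorem~3.1 of \cite{LS}, which gives, for $(m,d)$-Pisot family type, an explicit containment $GR(\Phi)\subset\Z[\Lambda]v_1+\cdots+\Z[\Lambda]v_m$ and hence $D(GR)\le md$. With that citation in place your argument is complete. It is worth noting that the paper itself, after obtaining $D(GR)=md$ from \cite{LS}, does \emph{not} simply apply Theorem~\ref{gs=rp}; instead it gives a direct, self-contained proof that $G(T)=G(T')\iff g(T)=g(T')$, constructing an explicit lift of $g$ through approximating maps on higher-order Anderson--Putnam complexes for one direction and using the strong-regional-proximality characterization from \cite{BK} for the other. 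Your route via Theorem~\ref{gs=rp} is more economical; the paper's route has the virtue of being largely independent of the machinery in Propositions~\ref{gs implies rp} and~\ref{rp implies gs}.
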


See Section \ref{Pisot family substitutions} for the proof.

\begin{remark} The traditional Pisot Substitution Conjecture is: If $\Phi$ is a 1-dimensional substitution with 
irreducible and unimodular incidence matrix and Pisot inflation factor, then the $\R$-action on $\Omega_{\Phi}$ 
has pure discrete spectrum. There are substitutions that satisfy these hypotheses and not those of Conjecture \ref{homologicalPisot1}, and vice-versa. In the final section of this paper the conditions in Conjectures \ref{hypconjecture} and \ref{homologicalPisot1} are relaxed a bit to produce  conjectures that do generalize the traditional Pisot Substitution Conjecture. The relaxation involves replacing $H^1(\OP;\Z)$ by the potentially smaller $H^1_{ess}(\OP;\Z)$.
\end{remark}

\begin{remark} Lee and Solomyak (\cite{LS}) show that if $\Phi$ is of $(m,d)$-Pisot family type, then the eigenvalues of the $\R^n$-action are relatively dense. The difficult part of their proof lies in demonstrating that the return vectors are contained in a $\Z$-module of rank $md=D(\Lambda)$. This is an assumption of Theorem \ref{gs=rp} above. But the condition $D(\Lambda)=D(GR)$ is easily verified in practice: It follows from Lemma \ref{eigenvalues} that if the multiplicity of $\lambda$ as an eigenvalue of $f_*:H_1(X;\Z)\to H_1(X;\Z)$ is the same as its multiplicity as an eigenvalue of $\Lambda$, for each $\lambda\in spec(\Lambda)$, then $D(\Lambda)=D(GR)$. (Here $f:X\to X$ is the map on the collared A-P complex, see below.)
\end{remark}

The cohomologies (and {\it essential} cohomologies - see Section \ref{finalsection}) in the following examples are easily computed by the methods of \cite{bd1}.

\begin{example}\label{example1} Let $\phi$ be the substitution on letters: $a\mapsto ab',\, b\mapsto a,\, a'\mapsto a'b,\, b'\mapsto a'$. The corresponding one-dimensional substitution $\Phi$ is of $(1,2)$-Pisot family type. $H^1_{hyp}=H^1_{\Lambda}\simeq \Z^2$, so $G'=G=g$. These maps are a.e. 2-to-1 and 
$H^1_{hyp}$ is proper in $H^1_{ess}(\OP;\Z)\simeq\Z^4$. (Here $H^1(\OP;\Z)\simeq\Z^7$.)
\end{example}

\begin{example}\label{example2} Let $\phi$ be the substitution on letters: $a\mapsto aba',\, b\mapsto ab,\,a'\mapsto a'b'a,\, b'\mapsto a'b'$.The corresponding one-dimensional substitution $\Phi$ is of $(1,2)$-Pisot family type so $G=g:\OP\to\T^2$ and these maps are a.e. 2-to-1. $H^1(\OP;\Z)\simeq\Z^5$, but $H^1_{hyp}=H^1_{ess}(\OP;\Z)\simeq\Z^4$ and $G':\OP\to\T^4$ is a.e. 1-to-1.
\end{example}

\section{Abelian covers of $\OP$ and global shadowing}
To simplify notation, let us fix a (primitive, FLC, non-periodic) $n$-dimensional substitution $\Phi$ and let $\Omega:=\OP$.
Let $X$ be the Anderson-Putnam complex (A-P complex) for $\Phi$ (see \cite{AP}):  $X$ is a cell complex with one n-cell $\rho_i\times\{i\}$ for each prototile $\rho_i$, and these n-cells are glued along faces according to the following scheme.
Suppose that $\rho_i$ and $\rho_j$ are prototiles and $u,v\in\R^n$, $T\in\Omega$, are such that
$\rho_i+u,\rho_j+v\in T$. Set $\rho_i\times\{i\}\ni (x,i)\sim (y,j)\in\rho_j\times\{j\}$ if $x+u=y+v$ and 
extend $\sim$ to an equivalence relation on $\cup_i\rho_i\times\{i\}$. The A-P complex is the quotient $X:=\cup_i\rho_i\times\{i\}/\sim$. There is a natural map $p:\Omega\to X$ given by $p(T)=
[(x,i)]_{\sim}$ provided $0=u+x\in u+\rho_i\in T$ and an induced map $f:X\to X$ with $p\circ \Phi=
f\circ p$. The map $\hat p:\Omega\to\il f$ given by $\hat p(T):=(p(T),p(\Phi^{-1}(T)),\ldots)$ and the shift homeomorphism $\hat f:\il f\to\il f$ satisfy:

\begin{itemize}
\item{$\hat p\circ\Phi=\hat{f}\circ p$; and}
\item{$\hat p$ is an a.e. (with respect to $\mu$ on $\Omega$) one-to-one surjection.}

\end{itemize}
Moreover, when 
$\Phi$ has the property that it ``forces the border", which can be arranged by replacing the tiles of tilings in $\Omega$ by their collared versions, the map $\hat p$ is a homeomorphism (\cite{AP}).

Let $\pi:\tilde{X}\to X$ be the universal cover of the A-P complex. The group of deck transformations of $\tX$ can be identified with the fundamental group of $X$. We form the {\bf abelian cover} $\pi_{ab}:\tX_{ab}\to X$ by quotienting out the action of the commutator subgroup, $C$, of the fundamental group of $X$: $\tX_{ab}:=\tX/\sim_{ab}$ with $\tx\sim_{ab}\ty$ if and only if there is $\gamma\in C$ with $\gamma(\tx)=\ty$. Since the map $f_{\sharp}$ induced by $f$ on the fundamental group of $X$ takes $C$ into $C$, $f$ lifts to $\tf_{ab}:\tX_{ab}\to\tX_{ab}$. The group of deck transformations of $\tX_{ab}$ can be identified with the first homology $H_1(X;\Z)$. If $h\in H_1(X;\Z)$ and $\tx\in\tX_{ab}$ we write $\tx+h$, or sometimes $h(\tx)$, for the image of $\tx$ under the deck transformation corresponding to $h$, and we have $\tf_{ab}(\tx+h)=\tf(\tx)+f_*(h)$ with $f_*$ the homomorphism induced on $H_1(X;\Z)$ by $f$.

Suppose that $K$ is any subgroup of $H_1(X;\Z)$. There is then a corresponding cover $\pi_K:\tX_K\to X$ where $\tX_K$ is the quotient of $\tX_{ab}$ by the action
of $K$. The group of deck transformations of $\tX_K$ is $H_1(X;\Z)/K$ and if $K$ is invariant under $f_*$, $f$ lifts to $\tf_K:\tX_K\to\tX_K$, and $\tf_K(\tx+[h])=\tf_K(\tx)+[f_*(h)]$, where $[h]$ denotes the coset $h+K$.

\begin{lemma}\label{covering map} Suppose that the subgroup $K$ of $H_1(X;\Z)$ is invariant under $f_*$ and that $\bar{f}_*$ defined by $\bar{f}_*([h]):= [f_*(h)]$ is an isomorphism of $H_1(X;\Z)/K$. Then the natural map $\hat{\pi}_K:\il \tf_K\to\il f$, given by $\hat{\pi}_K((\tx)_i):=(\pi_K(\tx_i))$, is a covering map with group of deck transformations equal to $H_1(X;\Z)/K$.
\end{lemma}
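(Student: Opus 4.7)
The plan is to promote $\pi_K: \tX_K \to X$ to a covering map of inverse limits by transporting the deck action across $\tf_K$, using invertibility of $\bar{f}_*$ to untwist this action level by level. First, I would define a free continuous action of $G := H_1(X;\Z)/K$ on $\il \tf_K$ by
\[
[h]\cdot(\tx_i)_{i\ge 0} \;:=\; \bigl(\tx_i + \bar{f}_*^{-i}([h])\bigr)_{i\ge 0}.
\]
The formula is well-defined because $\bar{f}_*$ is invertible, and the sequence on the right is $\tf_K$-compatible thanks to the identity $\tf_K(\tx + [g]) = \tf_K(\tx) + \bar{f}_*([g])$. Freeness descends from freeness of the deck action on $\tX_K$, and $\hat{\pi}_K$ is manifestly $G$-invariant.

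Next I would show $\hat{\pi}_K$ is surjective and its fibers are the $G$-orbits. Given $(x_i) \in \il f$ and any lift $\tx_0 \in \pi_K^{-1}(x_0)$, the extension to a sequence $(\tx_i) \in \il \tf_K$ projecting to $(x_i)$ is unique: inductively, having fixed $\tx_{i-1}$, pick any $\tilde{y}_i \in \pi_K^{-1}(x_i)$; then $\tf_K(\tilde{y}_i) \in \pi_K^{-1}(x_{i-1})$ equals $\tx_{i-1} + [g]$ for a unique $[g] \in G$, and $\tx_i := \tilde{y}_i - \bar{f}_*^{-1}([g])$ is the unique element of $\pi_K^{-1}(x_i)$ satisfying $\tf_K(\tx_i) = \tx_{i-1}$. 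Thus $\tx_0 \mapsto (\tx_i)$ is a $G$-equivariant bijection $\pi_K^{-1}(x_0) \to \hat{\pi}_K^{-1}((x_i))$.

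Finally I would verify local triviality. Take a neighborhood $U_0 \subset X$ of $x_0$ evenly covered by $\pi_K$, with sheet decomposition $\pi_K^{-1}(U_0) = \bigsqcup_{[h]} \tilde{U}_0^{[h]}$, and set $U := \hat{\pi}_0^{-1}(U_0) \subset \il f$, where $\hat{\pi}_0$ denotes the $0$th-coordinate projection. Then $\hat{\pi}_K^{-1}(U) = \bigsqcup_{[h]} V^{[h]}$ with $V^{[h]} := \{(\ty_i) \in \il\tf_K : \ty_0 \in \tilde{U}_0^{[h]}\}$ open in $\il \tf_K$, and the restriction $\hat{\pi}_K: V^{[h]} \to U$ is a continuous bijection by the previous step. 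Continuity of the inverse is checked coordinate by coordinate via the universal property of the inverse limit: the $0$th coordinate is the continuous local section of $\pi_K$ over $\tilde{U}_0^{[h]}$, and by induction on $j$ the unique lift constructed above must locally coincide with the continuous section of $\pi_K$ through the appropriate sheet at level $j$ (two lifts of the same base point lying in a common evenly covered neighborhood must agree). The main obstacle is handling the non-commutation $\tf_K \circ [h] = \bar{f}_*([h]) \circ \tf_K$: without $\bar{f}_*$ invertible one cannot define a compatible $G$-action on $\il \tf_K$ or uniquely untwist successive lifts, so this hypothesis is essential throughout; the most delicate part is the continuity of the trivializations, though it reduces to standard local-homeomorphism arguments once the sheet-selection combinatorics is in hand.
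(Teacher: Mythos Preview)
Your proof is correct and shares its core with the paper's: the inductive lifting step---pick any preimage $\tilde{y}_i$ of $x_i$, compare $\tf_K(\tilde{y}_i)$ with $\tx_{i-1}$, and correct by $\bar{f}_*^{-1}$ of the discrepancy---is exactly the paper's surjectivity argument. The paper, however, treats surjectivity as the only non-obvious point and dispatches it in three lines, leaving the deck-transformation structure and local triviality unstated; you supply these explicitly, in particular the formula $[h]\cdot(\tx_i)=(\tx_i+\bar{f}_*^{-i}([h]))$ for the $G$-action on the inverse limit and the sheet-by-sheet trivialization over a cylinder set. What you gain is a self-contained verification that $\hat{\pi}_K$ really is a covering map with the claimed deck group, rather than an appeal to the reader's familiarity with how covers of inverse limits behave; what the paper gains is brevity, relying on the fact that once surjectivity is known the rest follows from general nonsense about inverse limits of regular covers with equivariant bonding maps.
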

\begin{proof} The issue is surjectivity. Given $x,y\in X$ with $f(x)=y$, and $\ty\in\tX_K$ with $\pi_K(\ty)=y$, pick $\tx'\in\tX_K$ with $\pi_K(\tx')=x$. Then $\pi_K(\tf_K(\tx'))=y$ so there is $h\in H_1(X;\Z)$ with  $\tf_K(\tx')+[h]=\ty$. Let $\tx:=\tx'+\bar{f}_*^{-1}([h])$. Then 
$\tf_K(\tx)=\ty$ and surjectivity of $\hat{\pi}_K$ follows.  
\end{proof}

Let us give another description of the covering $\hat{\pi}_K:\il \tf_K\to\il f$. Let $\Omega=\OP$ and let 
$\tilde{\Omega}_K:=\{(T,\tx):p(T)=\pi_K(\tx)\}\subset \Omega\times\tX_K$, with the product topology.
Let $\pi_1:\tilde{\Omega}_K\to\Omega$ and $\pi_2:\tilde{\Omega}_K\to\tX_K$ be projections onto first and second factors.

\begin{lemma}\label{tO} Suppose, as in Lemma \ref{covering map}, that $K$ is invariant under $f_*$ and that $\bar{f}_*$ is an isomorphism. Suppose also that $X$ is the collared A-P complex of $\Phi$. Then $\pi_1:\tilde{\Omega}_K\to\Omega$ is isomorphic with $\hat{\pi}_K:\il\tf_K\to\il f$.
\end{lemma}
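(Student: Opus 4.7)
The plan is to construct an explicit homeomorphism $\Psi:\tilde{\Omega}_K\to\il\tf_K$ that intertwines the projections, i.e., satisfies $\hat{\pi}_K\circ\Psi=\hat{p}\circ\pi_1$. The crucial ingredient is that, because $X$ is the collared A-P complex, Anderson-Putnam gives that $\hat{p}:\Omega\to\il f$ is a homeomorphism, so the task reduces to identifying $\tilde{\Omega}_K$ with $\il\tf_K$ as coverings over the same base.

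To define $\Psi$, I would use coherent inductive lifting of the kind already implicit in the proof of Lemma \ref{covering map}. Given $(T,\tx)\in\tilde{\Omega}_K$, put $x_i:=p(\Phi^{-i}(T))$ and $\tx_0:=\tx$, so that $\pi_K(\tx_0)=p(T)=x_0$ by the definition of $\tilde{\Omega}_K$. Assuming $\tx_i\in\pi_K^{-1}(x_i)$ has been produced, I would show that there is a unique $\tx_{i+1}\in\pi_K^{-1}(x_{i+1})$ with $\tf_K(\tx_{i+1})=\tx_i$: the deck group $H_1(X;\Z)/K$ acts freely and transitively on each fiber of $\pi_K$, and the equivariance $\tf_K(\tx+[h])=\tf_K(\tx)+\bar{f}_*([h])$ together with the hypothesis that $\bar{f}_*$ is an isomorphism promote $\tf_K|_{\pi_K^{-1}(x_{i+1})}$ to a bijection onto $\pi_K^{-1}(x_i)$. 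Setting $\Psi((T,\tx)):=(\tx_i)_{i\ge 0}$ then yields a well-defined element of $\il\tf_K$.

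Continuity of $\Psi$ would follow coordinate-by-coordinate: inductively, if $(T,\tx)\mapsto\tx_i$ is continuous, an evenly covered neighborhood of $p(\Phi^{-i-1}(T_0))$ provides a local continuous section $s$ of $\pi_K$ passing through the chosen lift, and for $T$ near $T_0$ the discrete correction between $s(x_{i+1}(T))$ and the coherent $\tx_{i+1}(T)$ is locally constant hence trivial. The inverse is given explicitly by $(\tx_i)\mapsto(\hat{p}^{-1}((\pi_K(\tx_i))),\tx_0)$, whose continuity is immediate from that of $\hat{p}^{-1}$, and the intertwining $\hat{\pi}_K\circ\Psi=\hat{p}\circ\pi_1$ is a direct computation: $\hat{\pi}_K\circ\Psi((T,\tx))=(\pi_K(\tx_i))=(x_i)=\hat{p}(T)$. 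The only substantive step, and the one I would identify as the main technical obstacle, is the uniqueness-existence argument for $\tx_{i+1}$; the whole construction hinges on $\bar{f}_*$ being an isomorphism, exactly mirroring the role of that hypothesis in the surjectivity proof of Lemma \ref{covering map}.
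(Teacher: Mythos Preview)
Your proposal is correct and follows essentially the same route as the paper: the paper defines the map $\tilde{\hat{p}}:(T,\tx)\mapsto(\tx_i)$ with $\tx_0=\tx$ and $\pi_K(\tx_i)=p(\Phi^{-i}(T))$, gives the identical inverse formula $(\tx_i)\mapsto(\hat{p}^{-1}((\pi_K(\tx_i))),\tx_0)$, and records the intertwining $\hat{\pi}_K\circ\tilde{\hat{p}}=\hat{p}\circ\pi_1$. Your version simply spells out the existence/uniqueness of $\tx_{i+1}$ via the bijectivity of $\bar{f}_*$ and the coordinate-wise continuity argument, details the paper leaves implicit.
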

\begin{proof} Since $X$ is collared, the map $\hat{p}:\Omega\to \il f$, $\hat{p}(T):=(p(T),p(\Phi^{-1}(T),\ldots)$, is a homeomorphism. The continuous map $\tilde{\hat{p}}$ defined by $(T,\tx)\mapsto (\tx_i)$, where $\tx_i$ satisfies: $\tx_0=\tx$ and $\pi_K(\tx_i)=p(\Phi^{-i}(T))$ has continuous inverse $(\tx_i)\mapsto (\hat{p}^{-1}((\pi_K(\tx_i))),\tx_0)$. Moreover, $\hat{\pi}_K\circ \tilde{\hat{p}}=\hat{p}\circ\pi_1$.
\end{proof}

Under the hypotheses of Lemma \ref{tO}:
\begin{itemize}
\item We can lift the homeomorphism $\Phi$ on $\Omega$ to a homeomorphism $\tilde{\Phi}_K:\tilde{\Omega}_K\to\tilde{\Omega}_K$ defined by $\tilde{\Phi}_K((T,\tx)):=(\Phi(T),\tf_K(\tx))$. This homeomorphism is conjugated with $\hat{\tf}_K$, the shift homeomorphism on  $ \il \tf_K$, by $\tilde{\hat{p}}$.
\item We may also lift the $\R^n$-action from $\Omega$ to $\tilde{\Omega}_K$ as follows. Given $T\in\Omega$ let 
$p^T:\R^n\to X$ be defined by $p^T(v):=p(T-v)$. Given $\tx\in\tX_K$ with $\pi_K(\tx)=p(T)$, let $\tilde{p}^T_{\tx}:\R^n\to\tX_K$ be the unique lift of $p^T$ satisfying $\tilde{p}^T_{\tx}(0)=\tx$. For $\tT=(T,\tx)\in\tilde{\Omega}_K$ and $v\in\R^n$ define $\tT-v:=(T-v,\tilde{p}^T_{\tx}(v))$. Note that $\tilde{\Phi}_K(\tT-v)=\tilde{\Phi}_K(\tT)-\Lambda v$.
\item Finally we equip $\tilde{\Omega}_K$ with a metric $\bar d$ as follows.
First we select a metric $\td$ on  $\tX_K$ with two important properties. Suppose that
$\{[h_i]\}_{i=1}^N$ is a basis for $H_1(X;\Z)/K$. For $[h]=\sum_{i=1}^Nb_i[h_i]$, let $|[h]|:=\sum_{i=1}^N|b_i|$. We take $\td$ so that:
\begin{enumerate}
\item  $\td(\tx+[h],\ty+[h])=\td(\tx,\ty)$ for all $\tx,\ty\in\tX_K, [h]\in H_1(X;\Z)/K$; and
\item  $\td(\tx,\ty+[h])\to\infty$ as $|[h]|\to\infty$ for all $\tx,\ty\in\tX_K$.
\end{enumerate}
For the metric on $\tilde{\Omega}_K$ we set $\bar{d}((T,\tx),(S,\ty)):=d(T,S)+\td(\tx,\ty)$. \end{itemize}
For $T,S\in\Omega$, we say that
$T$ {\bf globally shadows} $S$ (with respect to K), and write $T\sim_{gsK}S$, if there are $\tT=(T,\tx)$ and $\tS=(S,\ty)$ in $\tO_K$ so that
$\{\bar{d}(\tilde{\Phi}_K^k(\tT),\tilde{\Phi}_K^k(\tS)\}_{k\in\Z}$, is bounded.

\section{Generalized return vectors}

A vector $v\in\R^n$ is called a {\bf return vector} for $\Phi$ if there is $T\in\OP$ so that $p(T-v) =p(T)$.
Let us call a vector $v\in\R^n$ a {\bf generalized return vector} for $\Phi$ if there are $v_i\in\R^n$ and $T_i\in\Omega$, $i=1,\ldots,k$, with $v=v_1+\cdots+v_k$, so that $p(T_i-v_i)=p(T_{i+1})$ for $i=1,\ldots,k-1$,
and $p(T_k-v_k)=p(T_1)$. The collection of generalized return vectors is a subgroup of $\R^n$ which we denote by  $GR(\Phi)$. If $\Phi$ is unimodular, one can show that $GR(\Phi)$ equals the subgroup of $\R^n$ generated by the return vectors.

Any path $\alpha:[0,1]\to X$ in the Anderson-Putnam complex $X$ for $\Phi$ can be ``lifted" to a curve $\tilde{\alpha}:[0,1]\to\R^n$ (think of unfolding the tiles that $\alpha$ runs through). Let
$l(\alpha):=\tilde{\alpha}(1)-\tilde{\alpha}(0)$.

\begin{lemma}\label{return vector} (\cite{BSW}) If $\alpha$ is a path in the A-P complex $X$ for $\Phi$, the vector $l(\alpha)$ is a well-defined (independent of lift) element of $GR(\Phi)$ and depends only on the homotopy class (rel. endpoints) represented by $\alpha$. Furthermore, if $\alpha$ is a loop, $l(\alpha)$ depends only on the homology class of $\alpha$. The resulting function $l:H_1(X;\Z)\to GR(\Phi)$ is a surjective group homomorphism and $l\circ f_*=\Lambda l$.
\end{lemma}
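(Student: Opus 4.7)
The plan is to construct the lift $\tilde\alpha$ tile by tile, using the canonical embeddings of the prototiles in $\R^n$, and then verify the claims in sequence. Since each $n$-cell of $X$ is a quotient of some prototile $\rho_i\subset\R^n$ and cells are glued along full faces coming from actual tile adjacencies in members of $\Omega$, I would subdivide $[0,1]$ into intervals $[t_{j-1},t_j]$ fine enough that each $\alpha([t_{j-1},t_j])$ lies in a single $n$-cell $\rho_{i_j}\times\{i_j\}$. Pick any $\tilde\alpha(0)\in\R^n$ in a translated copy of $\rho_{i_1}$ projecting onto $\alpha(0)$; on $[t_0,t_1]$ define $\tilde\alpha$ via the canonical identification with that translate of $\rho_{i_1}$. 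At each boundary $t_j$ the point $\alpha(t_j)$ lies on a shared face of cells $i_j,i_{j+1}$; the face-to-face gluing condition provides vectors $u,v\in\R^n$ and a tiling $T\in\Omega$ with $\rho_{i_j}+u,\rho_{i_{j+1}}+v\in T$ meeting at that face, and I continue the lift by placing $\rho_{i_{j+1}}$ in $\R^n$ so its face is flush with the face of $\rho_{i_j}$ already placed. A different choice of $\tilde\alpha(0)$ shifts the entire lift by the same constant, so $l(\alpha):=\tilde\alpha(1)-\tilde\alpha(0)$ is independent of the lift.

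Next, to see $l(\alpha)\in GR(\Phi)$ when $\alpha$ is a loop, I reuse the same subdivision. Pick tilings $T_j\in\Omega$ with $p(T_j)=\alpha(t_{j-1})$, so the origin in $T_j$ sits in a tile of type $i_j$ whose puncture coordinate is $\alpha(t_{j-1})$. Setting $v_j:=\tilde\alpha(t_j)-\tilde\alpha(t_{j-1})$, one checks directly from the definition of $p$ that $p(T_j-v_j)=\alpha(t_j)=p(T_{j+1})$. When $\alpha$ closes up, one may take $T_{k+1}=T_1$, yielding the cyclic condition with $v=l(\alpha)=\sum_j v_j\in GR(\Phi)$. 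Homotopy invariance rel endpoints is then standard: any homotopy of paths lifts to a homotopy of lifts with lifted endpoints fixed in $\R^n$, so $l$ depends only on the rel-endpoint homotopy class of $\alpha$. Since concatenation of lifts matches concatenation of paths, $l$ restricts to a homomorphism $\pi_1(X,x_0)\to GR(\Phi)$; the target is abelian, so the map factors through $H_1(X;\Z)$, and the usual change-of-basepoint argument makes it independent of $x_0$.

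For surjectivity, given $v=v_1+\cdots+v_k\in GR(\Phi)$ realized by data $(T_j,v_j)$ satisfying the cyclic condition, I form the loop in $X$ obtained by concatenating the paths $t\mapsto p(T_j-tv_j)$ for $t\in[0,1]$; each piece contributes displacement $v_j$ to the lift, so the total displacement is $v$. The conjugation relation $l\circ f_*=\Lambda l$ is geometric: on each prototile $f$ sends $y\in\rho_i$ to the point of the subtile of $\cS_i$ located at $\Lambda y\in\Lambda(\rho_i)$, so the lift of $f\circ\alpha$ satisfies $\widetilde{f\circ\alpha}(t)=\Lambda\tilde\alpha(t)+c$ for some constant $c\in\R^n$ depending on basepoint choice, giving $l(f_*[\alpha])=\Lambda l([\alpha])$.

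The main obstacle, I expect, is the bookkeeping at tile boundaries: one must check that the face-to-face gluings in $X$ really do correspond to honest Euclidean translations between the lifted cells, and that the displacements $v_j$ extracted from the geometric lift fit into a valid $GR$ decomposition regardless of all the choices (subdivision fineness, transition tilings $T_j$ at each face). Once this compatibility is established, the remaining properties --- homology invariance, surjectivity, and the intertwining with $\Lambda$ --- follow as formal consequences of the construction.
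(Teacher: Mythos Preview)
The paper does not supply its own proof of this lemma; it is stated with a citation to \cite{BSW} and the text moves on immediately. So there is no argument in the paper to compare yours against.

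Your sketch is the standard route and is correct. The tile-by-tile unfolding you describe is precisely the developing map from the universal (or abelian) cover of $X$ into $\R^n$; its well-definedness hinges exactly on the point you flag as the main obstacle, namely that the face identifications in $X$ come from genuine translational adjacencies in tilings of $\Omega$, so that crossing a face corresponds to a well-defined Euclidean translation. Once that is in place, homotopy invariance, the factoring through $H_1$, surjectivity via the paths $t\mapsto p(T_j-tv_j)$, and the intertwining $l\circ f_*=\Lambda l$ all follow formally, just as you outline.

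One small observation: the lemma as phrased asserts $l(\alpha)\in GR(\Phi)$ for \emph{arbitrary} paths, while you (correctly) establish this only for loops. In fact a short path lying inside a single $n$-cell can have displacement not in $GR(\Phi)$ (e.g.\ in the Fibonacci tiling, a path of length $0.1$ inside an $a$-tile has $l(\alpha)=0.1\notin\Z+\Z\phi$), so the lemma's wording is slightly loose on this point. Only the loop case is needed to define $l:H_1(X;\Z)\to GR(\Phi)$, which is all the paper uses, and your argument covers that case completely.
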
 

Let $K_{\Lambda}$ be the kernel of $l:H_1(X;\Z)\to GR(\Phi)$. By Lemma \ref{return vector}, $K_{\Lambda}$ is invariant under $f_*$; let $\bar{f}_*:H_1(X;\Z)/K_{\Lambda}\to H_1(X;\Z)/K_{\Lambda}$ be the homomorphism induced by $f_*$. The group
$H_1(X;\Z)/K_{\Lambda}\simeq GR(\Phi)$ is a finitely generated ($H_1(X;\Z)$ is finitely generated) free
abelian group ($GR(\Phi)$ is a subgroup of $\R^n$) so, with the choice of some basis, $\bar{f}_*$ is represented by an integral matrix $A$. The following lemma is an adaptation of a result in \cite{KS}.

\begin{lemma}\label{eigenvalues} Let $spec(\Lambda)=\cup_{i=1}^k\mathcal{F}_i$ be the partition of $spec(\Lambda)$ into families of algebraically conjugate eigenvalues and let $m_i$ be the maximum multiplicity of the elements of $\mathcal{F}_i$. Then $\lambda$ is an eigenvalue of $A$ of multiplicity $m$ if and only if $\lambda$ has a conjugate in $\mathcal{F}_i$ for some $i$. Moreover, $m\ge m_i$.
\end{lemma}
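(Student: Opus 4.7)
The plan is to relate $A$ to $\Lambda$ via the inclusion $GR(\Phi)\hookrightarrow\R^n$ after extending scalars. By Lemma \ref{return vector}, the surjection $l:H_1(X;\Z)\to GR(\Phi)$ descends to an isomorphism $H_1(X;\Z)/K_{\Lambda}\cong GR(\Phi)$ that intertwines $\bar{f}_*$ (represented by the integer matrix $A$) with the restriction of $\Lambda$ to the $\Lambda$-invariant subgroup $GR(\Phi)\subset\R^n$. My first step would be to note that, because $\Phi$ is primitive and FLC, the return vectors are relatively dense in $\R^n$, so the $\R$-span of $GR(\Phi)$ is all of $\R^n$; hence the natural map
\[
\phi:GR(\Phi)\otimes_{\Z}\R\twoheadrightarrow\R^n,\qquad v\otimes r\mapsto rv,
\]
is a surjective $\R$-linear map intertwining $A\otimes 1$ on the left with $\Lambda$ on the right.

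For the ``if'' direction, let $K:=\ker\phi$, which is $A$-invariant. The resulting short exact sequence of $A$-modules forces the factorization $\chi_A(x)=\chi_{A|_K}(x)\cdot\chi_{\Lambda|_{\R^n}}(x)$, so every $\mu\in spec(\Lambda)$ occurs in $spec(A)$ with multiplicity at least $m(\mu)$. Since $A$ has integer entries, $\chi_A\in\Z[x]$, so $spec(A)$ is closed under algebraic conjugation over $\Q$, and the multiplicity of an eigenvalue of $A$ depends only on its Galois orbit. Applied to an element of $\mathcal{F}_i$ that realizes the maximum multiplicity $m_i$, this shows that every algebraic conjugate of any element of $\mathcal{F}_i$ is an eigenvalue of $A$ of multiplicity at least $m_i$.

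For the ``only if'' direction, put $\bar{q}(x):=\prod_{i=1}^k p_i(x)^{s_i}$, where $p_i\in\Z[x]$ is the common minimal polynomial over $\Q$ of the elements of $\mathcal{F}_i$ (each eigenvalue of $\Lambda$ is an algebraic integer by \cite{KS}) and $s_i$ is the largest Jordan block size of $\Lambda$ on $\R^n$ for an eigenvalue in $\mathcal{F}_i$. A standard Jordan-form computation yields $\bar{q}(\Lambda)=0$ on $\R^n$, hence $\bar{q}(\Lambda)$ vanishes on $GR(\Phi)\subset\R^n$, so $\bar{q}(A)=0$; the minimal polynomial of $A$ therefore divides $\bar{q}$, and every eigenvalue of $A$ is a root of $\bar{q}$, i.e., an algebraic conjugate of some element of $\cup_i\mathcal{F}_i$. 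The main obstacle in the whole argument is really the surjectivity of $\phi$ in the first step; once this is in hand the remainder reduces to a comparison of characteristic polynomials together with the integrality of $A$.
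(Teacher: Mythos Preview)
Your proof is correct and is built on the same linear-algebraic backbone as the paper's: both use the surjective intertwiner $L:\R^D\to\R^n$ (your $\phi$) arising from the inclusion $GR(\Phi)\hookrightarrow\R^n$, with surjectivity coming from the fact that return vectors span $\R^n$. Where you diverge is in how you extract the spectral information. For the ``if'' direction the paper transports generalized eigenspaces of $\Lambda^t$ through the injective transpose $L^t$ to produce generalized eigenspaces for $A^t$ of the right dimension, whereas you read off the factorization $\chi_A=\chi_{A|_{\ker\phi}}\cdot\chi_{\Lambda}$ from the short exact sequence and then invoke Galois-invariance of multiplicities in $\chi_A\in\Z[x]$. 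For the ``only if'' direction the paper restricts to the $A$-invariant integral subspace $W=\ker(q(A))$ cut out by the conjugates of $\lambda$, checks $L(W)\ne0$, and reduces to the previous step; you instead observe that the annihilating polynomial $\bar q\in\Z[x]$ of $\Lambda$ kills $\Lambda|_{GR(\Phi)}$ and hence $A$, so $spec(A)$ is contained in the roots of $\bar q$. Your route is a bit more streamlined and avoids the reduction-to-the-restriction maneuver; the paper's argument, on the other hand, makes the role of the integral structure of $GR(\Phi)$ more visible (via the integral basis of $W$), which is thematically useful later in the paper.
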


\begin{proof} Let $\{[h_1],\ldots,[h_k]\}$ be a basis for the $\Z$-module $H_1(X;\Z)/K_{\Lambda}$ with respect to which $\bar{f}_*$ is represented by $A$ and let $v_i:=l(h_i)\in GR(\Phi)$, $i=1,\ldots,k$. Let $L:\R^k\to\R^n$ be the linear map that takes the standard basis vector $e_i$ to $v_i$ for each $i$. Then $LA=\Lambda L$ and,
since the return vectors span $\R^n$ (a consequence of minimality of the $\R^n$-action on $\Omega$),
$L$ is surjective. Suppose that $\lambda$ has a conjugate $\mu$  in $\mathcal{F}_i$ for some $i$ 
and that $\mu$ is a real eigenvalue of  $\Lambda$ with multiplicity $m_i$.
There is then a $\Lambda^t$-invariant $m_i$-dimensional subspace $V\subset\R^n$, restricted to which $(\Lambda^t-\mu I)^{m_i}$ is zero. Then $(A^t-\mu I)^{m_i}$ is zero on the $m_i$-dimensional subspace $L^tV$ of $\R^k$. Since $L^t$ is injective, this means that $\mu$ is an eigenvalue of $A$ of multiplicity at least $m_i$, and thus that $\lambda$ is an eigenvalue with multiplicity at least $m_i$ of $A$.  We leave it to the reader to reach the same conclusion when $\lambda$ is complex.  

Now suppose that $\lambda$ is an eigenvalue of $A$. Let $p(t)=r(t)q(t)$ be the characteristic polynomial of $A$ factored with $r(t),q(t)\in\Z[t]$ so that all roots of $q(t)$, and no roots of $r(t)$, are algebraic conjugates of $\lambda$. Then $W:=ker(q(A))$ is a non-trivial subspace of $\R^k$, invariant under $A$, restricted to which all eigenvalues of $A$ are conjugates of $\lambda$. Moreover, since $q$ and $A$ are integer, $W$ has a basis $\{w_1,\ldots,w_d\}$ with $w_i\in \Z^k$ for each $i$. Let $w_i=\sum_{j=1}^dw_{ij}e_j$ with $w_{ij}\in\Z$. Then the elements $[h'_i]\in H_1(X;\Z)/K_{\Lambda}$, $h'_i:=\sum_{j=1}^dw_{ij}h_i$, span a $\bar{f}_*$-invariant submodule of $H_1(X;\Z)/K_{\Lambda}$
of rank $d>0$. Since $h'_i\notin K_{\Lambda}$, $l(h'_i)\ne0$ and it follows that $Lw_i\ne0$. Thus $L(W)$ is a nontrivial subspace of $\R^n$, invariant under $\Lambda$. Applying the argument of the previous paragraph to $A|_W$, $L|_W$ and $\Lambda|_{L(W)}$, we conclude that $\Lambda$ has an eigenvalue that is also an eigenvalue of $A|_W$, and is hence a conjugate of $\lambda$.
\end{proof}

From Lemma \ref{eigenvalues} we have:

\begin{corollary}  $D(GR)\ge D(\Lambda)$.
\end{corollary}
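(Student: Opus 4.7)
The plan is to translate the desired inequality into a comparison of eigenvalue multiplicities for the integer matrix $A$ of Lemma \ref{eigenvalues}. Since $l:H_1(X;\Z)\to GR(\Phi)$ is surjective with kernel $K_\Lambda$, the quotient $H_1(X;\Z)/K_\Lambda$ is a free abelian group whose rank equals the rank of $GR(\Phi)$, namely $D(GR)$. The matrix $A$ representing $\bar{f}_*$ in some basis is therefore a $D(GR)\times D(GR)$ integer matrix, so $D(GR)$ equals the sum of the algebraic multiplicities (over $\bar{\Q}$) of the eigenvalues of $A$.

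I would then bound this sum below using Lemma \ref{eigenvalues}. For each family $\mathcal{F}_i$ and each $\lambda\in\mathcal{F}_i$, the lemma guarantees that $\lambda$ is an eigenvalue of $A$ with multiplicity at least $m_i$. Since the characteristic polynomial of $A$ lies in $\Z[t]$, its complex roots occur in complete Galois orbits over $\Q$ with equal algebraic multiplicities. Hence the entire conjugacy class of $\lambda$, which contains $d_i$ elements, consists of eigenvalues of $A$ each of multiplicity at least $m_i$, contributing at least $m_id_i$ to the total algebraic multiplicity count.

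Because the $\mathcal{F}_i$ partition $spec(\Lambda)$ into disjoint algebraic conjugacy classes, the full Galois orbits arising from different families are themselves pairwise disjoint, so no multiplicity is counted twice. Summing over $i=1,\dots,k$ yields
$$D(GR)\;\ge\;\sum_{i=1}^k m_id_i\;=\;D(\Lambda),$$
as claimed. The only subtle point—and the thing to be careful about—is this disjointness of the Galois orbits across the various $\mathcal{F}_i$; once it is established from the definition of the partition, the inequality is a direct bookkeeping consequence of Lemma \ref{eigenvalues} combined with the integrality of $A$.
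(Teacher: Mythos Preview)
Your argument is correct and is precisely the intended deduction: the paper offers no separate proof beyond the remark ``From Lemma \ref{eigenvalues} we have,'' and you have simply unpacked that implication --- counting the size of $A$ as the total algebraic multiplicity of its eigenvalues, invoking the lemma to get multiplicity $\ge m_i$ on each full Galois orbit, and using integrality of the characteristic polynomial together with the definition of the partition to ensure these orbits are disjoint. There is nothing to add.
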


\begin{question} It is a consequence of Theorem 3.1 of \cite{LS} that, if $\Lambda$ is diagonalizable over $\C$ and all of its eigenvalues are algebraic conjugates of the same multiplicity, then $D(GR)=D(\Lambda)$. Is it ever the case that $D(GR)>D(\Lambda)$?
\end{question}

We see from Lemma \ref{eigenvalues} that if $\Phi$ is unimodular and hyperbolic, then $\bar{f}_*$ is unimodular and hyperbolic on $H_1(X;\Z)/K_{\Lambda}$. It is sometimes possible to increase the size of the quotient module while retaining the unimodularity and hyperbolicity of the corresponding quotient isomorphism (example \ref{example2}). Let $T$ denote the torsion subgroup of $H_1(K;\Z)$, let $f_*'$ denote the induced homomorphism $f_*':H_1(X;\Z)/T\to H_1(X;\Z)/T$ of the finitely generated free abelian group $H_1(X;\Z)/T$, and let $A$ be the matrix representing $f_*'$ in some basis. We may then factor the characteristic polynomial $p(t)$ of $A$, over $\Z$, as $p(t)=q(t)r(t)$ so that all roots of $q(t)$ are algebraic units and none have modulus 1; and all roots of $r(t)$ are not units, or have a conjugate of modulus 1. Let $F_0:=Ker(r(f_*'))$ and let $K_{hyp}:=\pi^{-1}(F_0)\subset H_1(X;\Z)$, where $\pi:H_1(X;\Z)\to H_1(X;\Z)/T$ is the quotient homomorphism. Then
$K_{hyp}\subset K_{\Lambda}$ is invariant under $f_*$, is independent of the choice of basis, and is minimal with respect to the property that the induced isomorphism $\bar{f}_*:H_1(X;\Z)/K_{hyp}\to
H_1(X;\Z)/K_{hyp}$ is unimodular and hyperbolic.

The group $H^1(X;\Z)$ is (naturally) isomorphic with $Hom(H_1(X;\Z),\Z)$ so the \Cech cohomology $H^1(\il f;\Z)=\dl f^*:H^1(X;\Z)\to H^1(X;\Z)$ is isomorphic with the direct limit of the dual of $f_*$. Let $H^1_{hyp}:=\{c\in H^1(X;\Z): c(h)=0 \text{ for all }h\in K_{hyp}\}$. Then $f^*|_{H^1_{hyp}}:H^1_{hyp}\to H^1_{hyp}$ is an isomorphism so that $H^1_{hyp}\simeq \dl f^*|_{H^1_{hyp}}$.
By means of $\hat{p}^*:\dl f^*\to H^1(\OP;\Z)$, $H^1_{hyp}$ can be viewed as a subgroup of $H^1(\OP;\Z)$; if $X$ is collared (so that $\hat{p}^*$ is an isomorphism), $H^1_{hyp}\subset H^1(\OP;\Z)$ is the largest subgroup of $H^1(\OP;\Z)$ that is invariant under $\Phi^*$, contains $H^1_{\Lambda}$, and on which $\Phi^*$ is unimodular and hyperbolic.

\section{From substitutions to toral automorphisms}

The first cohomology group $H^1(X;\Z)$ is naturally isomorphic with the {\bf Bruschlinski group}
consisting of homotopy classes of maps from $X$ to the additive circle group $\T:=\R/\Z$ (that is, $\T$ is a $K(\Z,1)$). An explicit isomorphism is given by $\Theta([\gamma])= \gamma^*({\bf1})$, where $\bf{1}$ is the fundamental class of $H^1(\T;\Z)$ and $\gamma^*(\bf{1})$ is its pullback to $H^1(X;\Z)$. Let us fix a subgroup $K$ of $H_1(X;\Z)$, invariant under $f_*$, and define $H^1_K:=\{c\in Hom(H_1(X;\Z),\Z):c(h)=0$ for all $h\in K\}\subset Hom(H_1(X;\Z),\Z)\simeq H^1(X;\Z)$.  Let $\{c_1,\ldots,c_N\}$ be a basis for $H^1_K$ and, for each $i=1,\ldots,N$, choose a map $\gamma_i$ with $[\gamma_i]:=\Theta^{-1}(c_i)$.
Then, for each $i$, $\Theta([\gamma_i])=(\gamma_i^*)^{-1}({\bf1})= c_i$ annihilates $K$.

For the following proposition, let $A=(a_{i,j})$ denote the transpose of the matrix representing the homomorphism induced on 
$H^1_K$ by $f$ with respect to the basis $\{c_1,\ldots,c_N\}$. Thus $\gamma_j\circ f=
\sum_{i=1}^Na_{j,i}\gamma_i$, up to homotopy. We also assume that $X$ is collared so that 
$\il f\simeq \OP$ (although the proposition would still be true in general, replacing $\OP$ by $\il f$  
and making the appropriate adjustments). Let $F_A:\T^N\to\T^N$ be the linear torus map $F_A(x+\Z^N):=Ax+\Z^N$, let $\Gamma:X\to\T^N$ be given by $\Gamma(x):=(\gamma_1(x),\ldots,\gamma_N(x))^t$, and let $G_0:=\Gamma\circ p:\OP\to\T^N$.

\begin{Proposition}\label{main prop} Suppose that $\Phi$ is unimodular and hyperbolic, and that $K$ is a subgroup of $H_1(X;\Z)$ that is invariant under $f_*$ and lies between $K_{hyp}$ and $K_{\Lambda}$. Then the map $F_A$ is a hyperbolic toral automorphism and there exists a continuous map $G:\OP\to \T^N$ with the properties:

(i) $G^*=G_0^*:H^1(\T^N;\Z)\to H^1(\OP;\Z)$;

(ii) $G\circ \Phi=F_A\circ G$;

(iii) $G(T)=G(S)$ if and only if $T\sim_{gsK}S$.
\end{Proposition}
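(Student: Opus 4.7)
The plan is to construct $G$ as a bounded hyperbolic correction of $G_0$ on the cover $\tilde\Omega_K$, then descend to $\Omega$. First, $F_A$ is hyperbolic: since $K_{hyp}\subseteq K\subseteq K_{\Lambda}$, the quotient $H_1(X;\Z)/K$ is a further $\bar f_*$--equivariant quotient of $H_1(X;\Z)/K_{hyp}$, so the characteristic polynomial of $A$ (equivalently, of $\bar f_*$ on $H_1/K$ in the basis dual to $\{c_1,\ldots,c_N\}$) divides in $\Z[t]$ the polynomial of the unimodular hyperbolic action on $H_1/K_{hyp}$. Thus $A\in GL_N(\Z)$ has no eigenvalue on the unit circle, and $\R^N=E^s\oplus E^u$ splits with $A$ contracting $E^s$ and expanding $E^u$ exponentially.

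For the construction, each $c_i\in H^1_K$ annihilates the image of $H_1(\tX_K;\Z)\to H_1(X;\Z)$ (which is $K$), so $(\pi_K)^*c_i=0$ in $H^1(\tX_K;\Z)$ and $\gamma_i$ lifts to $\tilde\gamma_i:\tX_K\to\R$. Put $\tilde\Gamma:=(\tilde\gamma_1,\ldots,\tilde\gamma_N)$ and $\tilde G_0(\tT):=\tilde\Gamma(\pi_2(\tT))$. The deck action satisfies $\tilde G_0(\tT+[h])-\tilde G_0(\tT)=\phi([h])\in\Z^N$, where $\phi([h]):=(c_1([h]),\ldots,c_N([h]))^t$ is the isomorphism $H_1(X;\Z)/K\simeq\Z^N$. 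Combining $\tilde\Phi_K(\tT+[h])=\tilde\Phi_K(\tT)+\bar f_*[h]$ with the compatibility $\phi\circ\bar f_*=A\circ\phi$ (immediate from $A$ being the transpose of the matrix of $f^*$ on $\{c_i\}$), the defect
\[
\eta\;:=\;\tilde G_0\circ\tilde\Phi_K-A\circ\tilde G_0
\]
is deck-invariant, so it descends to the compact space $\Omega$ and is uniformly bounded. Splitting $\eta=\eta^s+\eta^u$ relative to $E^s\oplus E^u$, the telescoping series
\[
\xi(\tT)\;:=\;\sum_{k\ge 0}A^{-k-1}\eta^u(\tilde\Phi_K^k\tT)\;-\;\sum_{k\ge 1}A^{k-1}\eta^s(\tilde\Phi_K^{-k}\tT)
\]
converge uniformly to a bounded, continuous, deck-invariant $\xi$ satisfying $A\xi-\xi\circ\tilde\Phi_K=\eta$. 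Hence $\tilde G:=\tilde G_0+\xi$ has the same deck equivariance as $\tilde G_0$ and obeys $\tilde G\circ\tilde\Phi_K=A\tilde G$; it descends to a continuous $G:\Omega\to\T^N$ with $G\circ\Phi=F_A\circ G$, giving (ii).

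Property (i) then follows because $\xi=\tilde G-\tilde G_0$ is bounded and deck-invariant, so the straight-line homotopy $\tilde G_0+t\xi$ descends to a homotopy $G_0\simeq G$ in $\Omega\to\T^N$, forcing $G^*=G_0^*$ on cohomology. For (iii), if $G(T)=G(S)$ one lifts so that $\tilde G(\tT)=\tilde G(\tS)$; then $\tilde G$ agrees on the full $\tilde\Phi_K$-orbits of $\tT$ and $\tS$, and boundedness of $\xi$ yields bounded $\tilde G_0$-differences along these orbits, which by property (2) of $\td$ converts to boundedness of $\bar d(\tilde\Phi_K^k\tT,\tilde\Phi_K^k\tS)$ for all $k\in\Z$, i.e., $T\sim_{gsK}S$. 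Conversely, $T\sim_{gsK}S$ forces the deck-displacement of paired orbits to stay bounded (again by property (2) of $\td$), hence $A^k(\tilde G(\tT)-\tilde G(\tS))=\tilde G(\tilde\Phi_K^k\tT)-\tilde G(\tilde\Phi_K^k\tS)$ is bounded in $k\in\Z$; hyperbolicity of $A$ then forces $\tilde G(\tT)=\tilde G(\tS)$ and so $G(T)=G(S)$.

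The main obstacle I anticipate is establishing the identity $\phi\circ\bar f_*=A\circ\phi$ cleanly enough to conclude that the defect $\eta$ is genuinely deck-invariant — and therefore bounded on the compact quotient — since once this is arranged, convergence of the telescoping series, descent to $\Omega$, and the shadowing characterization are routine hyperbolic-dynamics steps; the other crucial non-automatic ingredient is property (2) of the metric $\td$, which is precisely what allows one to trade bounds on coordinate lifts for bounds in $\bar d$ in the proof of (iii).
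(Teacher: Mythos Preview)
Your argument is correct and follows essentially the same route as the paper's proof: lift $\Gamma$ to $\tX_K$, show the defect $\tilde G_0\circ\tilde\Phi_K-A\tilde G_0$ is deck-invariant (hence bounded), use the hyperbolic splitting to correct $\tilde G_0$ to an exact intertwiner, and descend. Your telescoping-series solution of the cohomological equation is just a repackaging of the paper's explicit limit formulas $z^u=\lim(A^{-k}y_k)^u$, $z^s=\lim(A^ky_{-k})^s$, and your straight-line homotopy for (i) is the content of the paper's Lemma~\ref{same in cohomology}; the arguments for (iii) are likewise the same, trading bounds on $\tilde G$ for bounds on deck-displacements via property~(2) of~$\td$ and the isomorphism $\phi$.
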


For the following lemma, let $\tilde{\Omega}\to\Omega$ be a covering map with group of deck transformations $H$.

\begin{lemma}\label{same in cohomology} Suppose that $G_i:\Omega\to\T^N,i=1,2$, are continuous maps with lifts $\tilde{G}_i
:\tilde{\Omega}\to\R^N$. If $|\tilde{G}_1-\tilde{G}_2|$ is bounded, then $G_1^*=G_2^*:H^1(\T^N;\Z)\to
H^1(\Omega;\Z)$.
\end{lemma}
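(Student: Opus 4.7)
The plan is to deduce the statement from the homotopy invariance of cohomology by producing an explicit homotopy $G_1\simeq G_2:\Omega\to\T^N$. Since $\T^N$ is a $K(\Z^N,1)$, homotopic maps induce the same map on $H^1$, so it suffices to build the homotopy.

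First I would record the equivariance of the lifts. Since $\tilde\Omega\to\Omega$ has deck group $H$ and $\R^N\to\T^N$ has deck group $\Z^N$, each lift $\tilde{G}_i$ satisfies $\tilde{G}_i(\gamma\cdot\tilde\omega)=\tilde{G}_i(\tilde\omega)+\alpha_i(\gamma)$ for some group homomorphism $\alpha_i:H\to\Z^N$ (determined by $G_i$ and the choice of lift). Subtracting these two relations gives
\[
(\tilde{G}_1-\tilde{G}_2)(\gamma\cdot\tilde\omega)=(\tilde{G}_1-\tilde{G}_2)(\tilde\omega)+\bigl(\alpha_1(\gamma)-\alpha_2(\gamma)\bigr).
\]
Now the hypothesis that $|\tilde{G}_1-\tilde{G}_2|$ is bounded forces $\alpha_1(\gamma)-\alpha_2(\gamma)$ to be a bounded function of $\gamma\in H$. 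But $\alpha_1-\alpha_2:H\to\Z^N$ is a homomorphism into a torsion-free group, and any such homomorphism with bounded image (or even a single nonzero value, since its integer multiples are also in the image) must vanish. Hence $\alpha_1=\alpha_2$.

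With this equality in hand, I define the straight-line homotopy
\[
\tilde H_t(\tilde\omega):=(1-t)\tilde{G}_1(\tilde\omega)+t\,\tilde{G}_2(\tilde\omega),\qquad t\in[0,1].
\]
Because $\alpha_1=\alpha_2$, the map $\tilde H_t$ is $H$-equivariant for every $t$: $\tilde H_t(\gamma\cdot\tilde\omega)=\tilde H_t(\tilde\omega)+\alpha_1(\gamma)$. Therefore, composing with the covering projection $\R^N\to\T^N$, the resulting map $\tilde\Omega\times[0,1]\to\T^N$ factors through the quotient $\Omega\times[0,1]$ and defines a continuous homotopy $H_t:\Omega\to\T^N$ with $H_0=G_1$ and $H_1=G_2$.

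The final step is routine: homotopic maps induce the same map on \v{C}ech cohomology, so $G_1^*=G_2^*:H^1(\T^N;\Z)\to H^1(\Omega;\Z)$. The only place one has to be careful is the passage from boundedness of $\tilde{G}_1-\tilde{G}_2$ to the equality $\alpha_1=\alpha_2$; once that is observed, the rest of the argument is purely formal. There is no serious obstacle — this lemma is essentially the assertion that the ``translation parts'' of $G_1$ and $G_2$ at infinity coincide, and those translation parts are exactly what determine the maps into the $K(\Z^N,1)$ up to homotopy.
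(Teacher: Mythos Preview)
Your proof is correct and follows essentially the same approach as the paper: both show that boundedness of $\tilde G_1-\tilde G_2$ forces the deck-transformation homomorphisms $\alpha_1,\alpha_2:H\to\Z^N$ to coincide, and then use a straight-line homotopy that descends to $\Omega$. The only cosmetic difference is that the paper interpolates the compositions $\tilde\gamma\circ\tilde G_i$ for each $\gamma:\T^N\to\T$ and invokes the Bruschlinski isomorphism, whereas you interpolate $\tilde G_1$ and $\tilde G_2$ directly in $\R^N$ and appeal to homotopy invariance of \v{C}ech cohomology; your version is slightly more streamlined.
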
 
\begin{proof} There are homomorphisms $\alpha_i:H\to\Z^N$ so that $\tilde{G}_i(\tilde{x}+h))=\tilde{G}_i(\tilde{x})+\alpha_i(h)$ for all $h\in H$ and $i=1,2$. Clearly, $|\tilde{G}_1-\tilde{G}_2|$ bounded implies that $\alpha_1=\alpha_2=:\alpha$. Suppose that $\gamma:\T^N\to\T$ is continuous. Let $\tilde{\gamma}:\R^N\to\R$ be a lift. Define $\tilde{H}(\tilde{x},t):=t(\tilde{\gamma}\circ\tilde{G}_1(\tilde{x}))+(1-t)(\tilde{\gamma}\circ\tilde{G}_2(\tilde{x}))$. Then $\tilde{H}(\tilde{x}+h,t)=\cdots=\tilde{H}(\tilde{x},t)+\alpha(h)$, so
that $\tilde{H}$ descends to a homotopy from $\gamma\circ G_2$ to $\gamma\circ G_1$. Thus
$G_1^*(\gamma^*(\bf{1}))=G_2^*(\gamma^*(\bf{1}))$. That is, $G_1^*\circ\Theta=G_2^*\circ\Theta$,
so $G_1^*=G_2^*$, as $\Theta:[\T^N,\T]\to H^1(\T^N;\Z)$ is an isomorphism.
\end{proof}

\begin{proof}(of Proposition \ref{main prop}) 
Suppose that $\alpha$ is a loop in $\tX_K$. Then $\pi_K\circ\alpha$ is a loop that represents a homology class $[\pi_K\circ\alpha]$ in $K$ and $c_i([\pi_K\circ\alpha])=0$. That is, $(\gamma_i^*)^{-1}({\bf1})(([\pi_K\circ\alpha]))={\bf1}((\gamma_i)_*([\pi_K\circ\alpha])={\bf 1}([\gamma_i\circ\pi_K\circ\alpha])=0$. This means that the loop $\gamma_i\circ\pi_K\circ\alpha$ in $\T$ is null homotopic and it follows that the map $\gamma_i:X\to\T$ lifts
to $\tilde{\gamma}_i:\tX_K\to\R$. Recall that $\Gamma:X\to\T^N$ is given by $\Gamma(x):=(\gamma_1(x),\ldots,\gamma_N(x))^t$; then $\tilde{\Gamma}:=(\tilde{\gamma}_1,\ldots,\tilde{\gamma}_N)^t:\tX_K\to\R^N$ is a lift of $\Gamma$ and $\tilde{\Gamma}_0:=
\tilde{\Gamma}\circ\pi_0:\il \tf_K\to\R^N$ is a lift of $G_0=\Gamma\circ p$ (here $\pi_0$ is projection onto the zeroth coordinate and we have identified $\tO_K$ with $\il \tf_K$ via Lemma \ref{tO}).

 \noindent We know from Lemma \ref{eigenvalues}  that the integer matrix $A$ is unimodular and  hyperbolic and thus the linear torus map $F_A$ is a hyperbolic toral automorphism.
The homomorphism $\Gamma_*$ takes $K$ to 0 and the induced homomorphism $\bar{\Gamma}_*:H_1(X;\Z)/K\to H_1(\T^N;\Z)$ is an isomorphism. Moreover, the choice of $A$ guarantees that $\bar{\Gamma}_*\circ \bar{f}_*=(F_A)_*\circ\bar{\Gamma}_*$. 
Since $\tilde{\Gamma}$ is a lift of $\Gamma$, we have $\tilde{\Gamma}(\tx+[h])=\tilde{\Gamma}(\tx)+\bar{\Gamma}_*([h])$ for $\tx\in\tX_K$ and $[h]\in H_1(X;\Z)/K$. Thus $A\bar{\Gamma}_*=\bar{\Gamma}_*\circ\bar{ f}_*$ (we have identified $\Z^N\subset\R^N$ with $H_1(\T^N;\Z)$). We see then that 
$A\tilde{\Gamma}_0((\tx_i)+[h])=A(\tilde{\Gamma}_0((\tx_i))+\bar{\Gamma}_*([h]))=A\tilde{\Gamma}_0((\tx_i))+\bar{\Gamma}_*\circ \bar{f}_*([h])$, while $\tilde{\Gamma}_0\circ\hat{\tf}_K((\tx_i)+[h])=\tilde{\Gamma}_0(\hat{\tf}_K((\tx_i)+\bar{f}_*([h]))=\tilde{\Gamma}_0\circ\hat{\tf}_K((\tx_i))+\bar{\Gamma}_*\circ \bar{f}_*([h])$. It follows that $|A\tilde{\Gamma}_0-\tilde{\Gamma}_0\circ \hat{\tf}_K|$ is uniformly bounded on $\il \tf_K$ (by its bound on a single fundamental domain).

\noindent Let $E^s$ and $E^u$ denote the stable and unstable linear subspaces of $\R^N$ under application of $A$. Then $\R^N=E^s\oplus E^u$, and $E^s$ and $E^u$ are invariant under $A$. For each $z\in\R^N$,
let $z^s\in E^s$ and $z^u\in E^u$ be so that $z=z^s+z^u$. There are $C$ and $\eta$, $0<\eta<1$, with $||A^kz^s||\le C\eta^k||z^s||$ and  $||A^{-k}z^u||\le C\eta^{-k}||z^u||$ for all $z\in\R^N$ and $k\in\N$.
Now, given $(\tx_i)\in\il\tf_K$ and $k\in\Z$, let $y_k:=\tilde{\Gamma}_0(\tf_K^k((\tx_i)))$. By the above, $b_k:=y_{k+1}-Ay_k$ is bounded. For $k\in\N$ we have $A^{-k}y_k=y_0+\sum_{i=1}^kA^{-i}b_{i-1}$ and 
$A^ky_{-k}=y_0-\sum_{i=0}^{k-1}A^ib_i$. Define $z=z((\tx_i))$ by $z=z^u+z^s$ where
$$z^u:=\lim_{k\to\infty}(A^{-k}y_k)^u=y_0^u+\sum_{i=1}^{\infty}A^{-k}b_{k-1}^u$$ and 
$$z^s:=\lim_{k\to\infty}(A^ky_{-k})^s=y_0^s-\sum_{i=0}^{\infty}A^kb_{-k}^s.$$
It is clear that $z$ depends continuously on $(\tx_i)$, that $z((\tx_i)+[h])=z((\tx_i))+\bar{\Gamma}_*([h])$, and that $|\tilde{\Gamma}_0((\tx_i))-z((\tx_i))|$ is bounded. Thus the map $\tilde{\Gamma}':\il\tf_K\to\R^N$ given by
$\tilde{\Gamma}'((\tx_i)):=z((\tx_i))$ is the lift of a continuous map $\Gamma':\il f\to\T^N$.
Let $\tilde{\hat{p}}:\tO_K\to\il\tf_K$ be the isomorphism of Lemma \ref{tO}, let $\tG:=\tilde{\Gamma}'\circ\tilde{\hat{p}}:\tO_K\to\R^N$ and let $G:=\Gamma'\circ\hat{p}:\Omega\to\T^N$.
By Lemma \ref{same in cohomology}, $G^*=G_0^*$. Furthermore, $(\tilde{\Gamma}'(\hat{\tf}_K((\tx)_i)))^u=(\tilde{\Gamma}'((\tx_{i+1})))^u=\lim_{k\to\infty}(A^{-k}y_{k+1})^u=A\lim_{k\to\infty}(A^{-(k+1)}y_{k+1})^u=A(\tilde{\Gamma}'((\tx_i)))^u$. Similarly, $(\tilde{\Gamma}'(\hat{\tf}_K((\tx_i))))^s=A(\tilde{\Gamma}'((\tx)_i)))^s$, so that $\tilde{\Gamma}'\circ\hat{\tf}_K=A\tilde{\Gamma}'$, whence $\Gamma'\circ \hat{f}=F_A\circ \Gamma'$. Since $\hat{f}\circ\hat{p}=\hat{p}\circ\Phi$, we have 
$G\circ\Phi=F_A\circ G$.

It remains to prove (iii). First suppose that $T\sim_{gsK}S$. Let $\tT$ and $\tS$, lying over $T$ and $S$, have the property that $\bar{d}(\tilde{\Phi}_K^k(\tT),\tilde{\Phi}_K^k(\tS))$ is bounded for $k\in\Z$. As $\tilde{G}$ is a lift of a continuous function on a compact space, and the metric $\bar{d}$ is equivariant, $\tilde{G}$ is uniformly continuous and hence $|\tilde{G}(\tilde{\Phi}_K^k(\tT))-\tilde{G}(\tilde{\Phi}_K^k(\tS))|=|A^k(\tilde{G}(\tT)-\tilde{G}(\tS))|$ is also bounded. Since $A$ is hyperbolic, this can only happen if $\tilde{G}(\tT)=\tilde{G}(\tS)$. Thus $G(T)=G(S)$.

\noindent Conversely, if $G(T)=G(S)$, let $\tT$ and $\tS$ lie over $T$ and $S$. There is then $h\in\Z^N=H_1(\T^N;\Z)$ so that $\tilde{G}(\tT)+h=\tilde{G}(\tS)$ and there is $h'\in H_1(X;\Z)/K$ so that $\bar{\Gamma}_*(h')=h$. Let  $\tT':=\tT+h'$. Then $\tT'$ also lies over $T$ and $\tilde{G}(\tT')=\tilde{G}(\tS)$. From $\tilde{G}\circ\tilde{\Phi}_K=A\tilde{G}$ it follows that 
$\tilde{G}(\tilde{\Phi}_K^k(\tT'))=\tilde{G}(\tilde{\Phi}_K^k(\tS))$ for all $k\in\Z$. Were $\bar{d}(\tilde{\Phi}_K^k(\tT'),\tilde{\Phi}_K^k(\tS))$ not bounded, there would be $k_j\in\Z$ and $h_j\in H_1(X;\Z)/K$ with $\bar{d}(\tilde{\Phi}_K^{k_j}(\tT')+h_j,\tilde{\Phi}_K^{k_j}(\tS))$ bounded and $|h_j|\to\infty$ (see the second of the assumptions on the nature of $\td$), hence $|\bar{\Gamma}_*(h_j)|\to\infty$. But (by uniform continuity of $\tilde{G}$ and equivariance of $\bar{d}$), $|\tilde{G}(\tilde{\Phi}_K^{k_j}(\tT')+h_j)-\tilde{G}(\tilde{\Phi}_K^{k_j}(\tS)|=|\bar{\Gamma}_*(h_j)|$ is bounded. Thus it must be the case that $T\sim_{gsK}S$.
\end{proof}

\section{Geometric realization}\label{geometric realization}

\begin{proof}(of Theorems \ref{main theorem} and \ref{main theorem'}) 
Let $K=K_{\Lambda}$ for Theorem \ref{main theorem} or $K=K_{hyp}$ for Theorem \ref{main theorem'} and let $A$, $G$, $N$ be as in Proposition \ref{main prop}, with $N=D$ or $N=D'$, depending on whether $K=K_{\Lambda}$ or $K=K_{hyp}$ . Both $(\T^N,F_A)$ and $(\Omega,\Phi)$ are Smale spaces - the actions are hyperbolic with local product structure (see \cite{AP}). A lemma of Putnam (\cite{P}) asserts that a factor map (such as $G$) between Smale spaces
that is injective on unstable manifolds is globally finite-to-one. 

Thus we are reduced to proving that $G$ is injective on unstable manifolds.
Given $T\in\Omega$, the unstable manifold of $T$ under $\Phi$ is the set $W^u(T):=\{T'\in\Omega:d(\Phi^k(T),\Phi^k(T'))\to0$ as $k\to-\infty\}$. It is easy to see that $W^u(T)=\{T-v:v\in\R^n\}$. 

Suppose that $G(T-v)=G(T)$ for some $T\in\Omega$ and $0\ne v\in\R^n$. According to Proposition
\ref{main prop}, $T\sim_{gsK}T-v$. Let us first show that the lifts $\tT=(T,\tx)$ and $\tT-v=(T-v,\tilde{p}^T_{\tx}(v))$ of $T$ and $T-v$ to $\tO_K$ are such that $\bar{d}(\tilde{\Phi}_K^k(\tT),\tilde{\Phi}_K^k(\tT-v))\to\infty$ 
as $k\to\infty$. Note that  $\pi_2(\tilde{\Phi}_K^k(\tT))=\tilde{p}^{\Phi^k(T)}_{\tf_K^k(\tx)}(0)$ and $\pi_2(\tilde{\Phi}_K^k(\tT-v))=\tilde{p}^{\Phi^k(T)}_{\tf_K^k(\tx)}(\Lambda^k v)$.

We may choose return vectors $v_k$ so that $|v_k-\Lambda^kv|$ is bounded and $B_r[\Phi^k(T)]=B_r[\Phi^k(T)-v_k]$ with $r$ twice the maximum diameter of all tiles (recall that  $B_r[T]$ denotes the collection of all tiles in $T$ that meet the closed ball centered at $0$ with radius $r$). Then $p(\Phi^k(T))=p(\Phi^k(T)+v_k)$ in the collared Anderson-Putnam complex $X$. There is $[h_k]\in H_1(X;\Z)/K$ with $l(h_k)=v_k$. Note that there are infinitely many distinct such $v_k$, and hence infinitely many distinct $[h_k]$ for $k\in\N$. It follows that
$$\td(p^{\Phi^k(T)}_{\tf_K^k(\tx)}(0),p^{\Phi^k(T)}_{\tf_K^k(\tx)}(0)+[h_k])=\td(p^{\Phi^k(T)}_{\tf_K^k(\tx)}(0), p^{\Phi^k(T)}_{\tf_K^k(\tx)}(v_k))$$ is unbounded for $k\in\N$. Hence $$\td(p^{\Phi^k(T)}_{\tf_K^k(\tx)}(0),p^{\Phi^k(T)}_{\tf_K^k(\tx)}(\Lambda^kv))\ge \td(p^{\Phi^k(T)}_{\tf_K^k(\tx)}(0),p^{\Phi^k(T)}_{\tf_K^k(\tx)}(v_k))-\td(p^{\Phi^k(T)}_{\tf_K^k(\tx)}(v_k),p^{\Phi^k(T)}_{\tf_K^k(\tx)}(\Lambda^kv))$$$$\ge \td(p^{\Phi^k(T)}_{\tf_K^k(\tx)}(0),p^{\Phi^k(T)}_{\tf_K^k(\tx)}(v_k))-B,$$ for some finite $B$ independent of $k$, is also unbounded for $k\in\N$.

\noindent Suppose now $\tT$ and $\tT'$ are lifts of $T$ and $T-v$ so that $\bar{d}(\tilde{\Phi}_K^k(\tT),\tilde{\Phi}_K^k(\tT'))$ is bounded for $k\in\Z$ and $\tT'\ne \tT-v$. Since $\bar{d}(\tilde{\Phi}_K^k(\tT),\tilde{\Phi}_K^k(\tT-v))\to0$ as $k\to-\infty$, $\bar{d}(\tilde{\Phi}_K^k(\tT'),\tilde{\Phi}_K^k(\tT-v))$ must be bounded as $k\to-\infty$. But $\tT'$ and $\tT-v$ are not equal and both lie over $T-v$ so there is a nonzero $[h]\in H_1(X;\Z)/K$ with $\tT'+[h]=\tT-v$. Then the distance between $\tilde{\Phi}_K^k(\tT-v)=\tilde{\Phi}_K^k(\tT')+\bar{f}_*^k([h])$ and $\tilde{\Phi}_K^k(\tT')$ is certainly unbounded as $k\to-\infty$ since $\bar{f}_*^k([h])$, by hyperbolicity, takes on infinitly many different values in $H_1(X;\Z)/K$.
This establishes that $G$ is one-to-one on unstable manifolds and hence $G$ is globally finite-to-one.

That $G$ is $\mu$-a.e. $r$-to-1 (or $r'$-to-1) is a consequence of the ergodicity of $\Phi$ with respect to the unique invariant probability measure $\mu$ of the $\R^n$-action and the measurability of the function $f:\Omega\to\R$ given by $f(T):=\sharp G^{-1}(G(T))$.
\end{proof}

\section{Non-triviality of the global shadowing relation} 

To get some idea of what tilings are $\sim_{gsK}$-related, consider $K=K_{\Lambda}$:

\begin{prop}\label{K-equiv} Suppose that $\Phi$ is unimodular and hyperbolic and that $T,T'\in\Omega$ are $\Phi$-periodic and share a tile. Then $T\sim_{gsK_{\Lambda}}T'$.
\end{prop}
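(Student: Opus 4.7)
The strategy is to build explicit lifts $\tT, \tT' \in \tO_{K_{\Lambda}}$ witnessing $T \sim_{gsK_{\Lambda}} T'$ by using the shared tile to give them matching second coordinates after normalization. Let $u_\tau$ be the puncture of the shared tile $\tau$, and set $T_0 := T - u_\tau$ and $T_0' := T' - u_\tau$, so that $\tau - u_\tau$ is the tile at the origin of both and $p(T_0) = p(T_0')$. Choose any $\tilde z \in \tX_{K_{\Lambda}}$ over this common point, put $\tT_0 := (T_0, \tilde z)$ and $\tT_0' := (T_0', \tilde z)$, and take the candidate lifts to be the translates $\tT := \tT_0 - (-u_\tau)$ (a lift of $T$) and $\tT' := \tT_0' - (-u_\tau)$ (a lift of $T'$).

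Let $N$ be a common $\Phi$-period of $T$ and $T'$, and write $k = qN + r$ with $0 \le r < N$. The first-coordinate contribution to $\bar d(\tilde\Phi_{K_{\Lambda}}^k \tT, \tilde\Phi_{K_{\Lambda}}^k \tT')$ equals $d(\Phi^r T, \Phi^r T')$ and takes only finitely many values. For the second-coordinate contribution, I would apply the translation--substitution identity $\tilde\Phi_{K_{\Lambda}}^k(\tS - (-v)) = \tilde\Phi_{K_{\Lambda}}^k(\tS) - (-\Lambda^k v)$. Since $\tilde\Phi_{K_{\Lambda}}^k \tT_0$ and $\tilde\Phi_{K_{\Lambda}}^k \tT_0'$ share the second coordinate $\tf_{K_{\Lambda}}^k(\tilde z)$, the second coordinates of $\tilde\Phi_{K_{\Lambda}}^k \tT$ and $\tilde\Phi_{K_{\Lambda}}^k \tT'$ are the endpoints at $\tf_{K_{\Lambda}}^k(\tilde z)$ of lifts of two paths $\alpha_T, \alpha_{T'}$ in $X$ that walk, respectively, through $\Phi^k T_0$ and $\Phi^k T_0'$ from position $0$ to position $-\Lambda^k u_\tau$. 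Each terminates over $p(\Phi^k T)$ or $p(\Phi^k T')$, and each has $l$-value $-\Lambda^k u_\tau$.

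For each residue $r$, I would construct a fixed path $\gamma_r$ from $p(\Phi^r T)$ to $p(\Phi^r T')$ by concatenating a walk in $\Phi^r T$ from the origin to the puncture of a tile of the patch $\Phi^r(\tau) \subset \Phi^r T \cap \Phi^r T'$ with the reverse walk in $\Phi^r T'$. The two legs unfold to opposite linear segments, so $l(\gamma_r) = 0$. The loop $\gamma_r^{-1} \ast \alpha_T^{-1} \ast \alpha_{T'}$ has $l$-value $0 + \Lambda^k u_\tau - \Lambda^k u_\tau = 0$; its homology class lies in $\ker l = K_\Lambda$ and it therefore lifts to a loop in $\tX_{K_{\Lambda}}$. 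Unique path lifting then identifies the second coordinate of $\tilde\Phi_{K_{\Lambda}}^k \tT'$ with the endpoint of the lift of $\gamma_r$ starting at the second coordinate of $\tilde\Phi_{K_{\Lambda}}^k \tT$; by deck-invariance of $\td$ (property (1) of the metric), this distance equals a constant depending only on $r$. With only finitely many residues, the entire $\bar d$-distance stays bounded, establishing $T \sim_{gsK_{\Lambda}} T'$. The main obstacle I anticipate is the sign bookkeeping in the $l$-value computation---ensuring that the two dynamic legs $\alpha_T, \alpha_{T'}$ contribute the same $l$-value regardless of the ambient tiling (which holds because both walks have the same unfolded endpoints $0$ and $-\Lambda^k u_\tau$) and that $\gamma_r$ is balanced so its two halves cancel exactly.
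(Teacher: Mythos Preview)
Your proposal and the paper's proof both pivot on the same key observation: a loop built from walks in $T$ and $T'$ through the shared tile has $l$-value zero, hence represents a class in $K_\Lambda$ and lifts to a loop in $\tX_{K_\Lambda}$. The paper exploits this more efficiently. Working (for simplicity) with $T,T'$ fixed by $\Phi$, it first picks a lift $\tT=(T,\tx)$ that is \emph{fixed} by $\tilde\Phi_{K_\Lambda}$, then uses a single $l=0$ loop (walking in $T$ from $v\in\text{int}(\tau)$ to $\Lambda v$, then back in $T'$) to deduce that the companion lift $\tT'=(T',\tx')$ is also fixed. Once both lifts are fixed, $\bar d(\tilde\Phi_{K_\Lambda}^k\tT,\tilde\Phi_{K_\Lambda}^k\tT')$ is literally constant in $k$, and boundedness is immediate.

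Your execution has a gap for negative $k$. The claim that $\tilde\Phi_{K_\Lambda}^k\tT_0$ and $\tilde\Phi_{K_\Lambda}^k\tT_0'$ share second coordinate $\tf_{K_\Lambda}^k(\tilde z)$ is only justified for $k\ge 0$. For $k<0$ the map $\tf_{K_\Lambda}$ is not invertible; the second coordinate of $\tilde\Phi_{K_\Lambda}^{-1}(S,\ty)$ is the unique $\ty'$ with $\tf_{K_\Lambda}(\ty')=\ty$ and $\pi_{K_\Lambda}(\ty')=p(\Phi^{-1}(S))$, so it depends on $p(\Phi^{-1}(S))$ as well as on $\ty$. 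There is no reason that $p(\Phi^{-1}(T_0))=p(\Phi^{-1}(T_0'))$: the translates $T_0=T-u_\tau$ and $T_0'=T'-u_\tau$ are not themselves $\Phi$-periodic (the point $\Lambda^{-1}u_\tau$ need not lie in the shared supertile $\Phi^{N-1}(\tau)$), so the equality $p(T_0)=p(T_0')$ does not propagate backward. The gap is repairable within your framework: your $k\ge 0$ analysis, applied at $k=0$ and $k=N$ with the same $\gamma_0$, shows $\tilde\Phi_{K_\Lambda}^N\tT=\tT+[h]$ and $\tilde\Phi_{K_\Lambda}^N\tT'=\tT'+[h']$ with $[h]=[h']$, and then deck-invariance of $\bar d$ forces exact $N$-periodicity of the distance for all $k\in\Z$. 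But this step is absent from the proposal as written, and the ``sign bookkeeping'' you flag is not where the real difficulty lies.
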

\begin{proof} For simplicity, assume that $T$ and $T'$ are fixed by $\Phi$. Let $K=K_{\Lambda}$ and suppose that $\tau\in T\cap T'$. Let $v\in int(\tau)$, so that $B_0[T-v]=B_0[T'-v]$. Choose $\tT=(T,\tx)\in\tO_{K}$ to be fixed by $\tilde{\Phi}_K$. There is then $\tx'\in\tX_{K}$ so that $\pi_2(\tT'-v)=\pi_2(\tT-v)$, where $\tT'=(T',\tx')$. Consider the loop $\alpha$ in the A-P complex X:
\[\alpha(t)=\left\{ \begin{array}{ll}
   p^T(2t\Lambda v+(1-2t)v) &  \mbox{if $0\le t\le1/2$},\\
   p^{T'}(2t\Lambda v+(1-2t) v) & \mbox{if $1/2\le t\le1.$}
 \end{array} \right.\]
 Then $l(\alpha)=0$ so the homology class of $\alpha$ is in $K$ and $\alpha$ lifts to a loop
 $\tilde{\alpha}$ in $\tX_K$ with $\tilde{\alpha}(0)=\pi_2(\tT-v)=\tilde{\alpha}(1)$. The lift $\tilde{\alpha}$ is of the form:
 \[\tilde{\alpha}(t)=\left\{ \begin{array}{ll}
   \tp_{\tx}^T(2t\Lambda v+(1-2t)v) &  \mbox{if $0\le t\le1/2$},\\
   \tp_{\ty}^{T'}(2t\Lambda v+(1-2t) v) & \mbox{if $1/2\le t\le1;$}
 \end{array} \right.\]
 where $\ty$ is determined by continuity. Since $\tilde{\alpha}(1)=\tp_{\ty}^{T'}(v)=\tilde{\alpha}(0)=
 \tp_{\tx}^T(v)=\tp_{\tx'}^{T'}(v)$, it must be the case that $\ty=\tx'$. Now $\pi_2(\tT-v)=\pi_2(\tT'-v)\Rightarrow \pi_2(\tT-\Lambda v)=\pi_2(\tilde{\Phi}_K(\tT-v))=\pi_2(\tilde{\Phi}_K(\tT'-v))=\pi_2(\tilde{\Phi}_K(\tT')-\Lambda v)$. We have $\pi_2(\tT'-\Lambda v)=\tp_{\tx'}^{T'}(\Lambda v)=\tilde{\alpha}(1/2)=\pi_2(\tT-\Lambda v)=\pi_2(\tilde{\Phi}_K(\tT')-\Lambda v)$ and hence $\tilde{\Phi}_K(\tT')=\tT'$. Since $\tT$ and $\tT'$ are both fixed by $\tilde{\Phi}_K$, $T\sim_{gsK}T'$.
\end{proof}

It is proved in \cite{BO} that, if $\Phi$ is an $n$-dimensional self-similar substitution (that is, $\Lambda=\lambda I$), there are $\Phi$-periodic $T\ne T'\in\Omega$ that agree in a half-space: there is 
$0\ne u\in\R^n$ and $R$ so that $B_0[T-v]=B_0[T'-v]$ for all $v\in\R^n$ with $\langle u,v\rangle\ge R$. For such $T,T'$, $\{T,T'\}$ is called an {\em asymptotic pair}.

\begin{cor}\label{asymptotic pairs} Suppose that $\Phi$ is self-similar. There are then $\Phi$-periodic $T\ne T'\in\Omega$ and $0\ne u\in\R^n$ so that $T\sim_{gsK_{\Lambda}} T'$ and $T-v\sim_{gsK_{\Lambda}} T'-v$ for all $v\in\R^n$ with $\langle u,v\rangle>0$.
\end{cor}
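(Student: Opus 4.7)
The plan is to combine Proposition~\ref{K-equiv} with the asymptotic-pair result of \cite{BO} and then transport the resulting global-shadowing relation by translations in the direction of agreement. Concretely, invoke \cite{BO} to obtain distinct $\Phi$-periodic $T,T'\in\OP$ together with $0\ne u\in\R^n$ and $R>0$ such that $B_0[T-w]=B_0[T'-w]$ whenever $\langle u,w\rangle\ge R$. Replacing $\Phi$ by a suitable iterate (which preserves $\OP$, $X$, $K_\Lambda$, and the relation $\sim_{gsK_\Lambda}$, since a standard compact-fiber argument shows that bounded $\tilde{\Phi}_{K_\Lambda}^p$-orbits and bounded $\tilde{\Phi}_{K_\Lambda}$-orbits coincide), one may assume $\Phi(T)=T$ and $\Phi(T')=T'$. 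Hyperbolicity of $\bar{f}_*$ on $H_1(X;\Z)/K_\Lambda$ forces the $\tilde{\Phi}_{K_\Lambda}$-fixed lift over each of $T,T'$ to be unique; call them $\tT=(T,\tx)$ and $\tT'=(T',\tx')$. Applying the construction in the proof of Proposition~\ref{K-equiv} to a shared tile $\tau\subset T\cap T'$ lying in $H:=\{w:\langle u,w\rangle\ge R\}$ gives $T\sim_{gsK_\Lambda}T'$, and also produces a point $v_0\in\mathrm{int}(\tau)\subset H$ at which $\tp^T_\tx(v_0)=\tp^{T'}_{\tx'}(v_0)$.

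The key observation is that this pointwise agreement extends throughout $H$: on $H$ the tiles of $T$ and $T'$ coincide, so $p^T|_H=p^{T'}|_H$, and therefore $\tp^T_\tx|_H, \tp^{T'}_{\tx'}|_H$ are two continuous lifts of the same map $H\to X$ to the cover $\tX_{K_\Lambda}$. Since $H$ is path-connected and the two lifts agree at $v_0$, uniqueness of lifts forces $\tp^T_\tx(w)=\tp^{T'}_{\tx'}(w)$ for every $w\in H$.

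Given this, fix $v\in\R^n$ with $\langle u,v\rangle>0$ and take $\tT-v, \tT'-v\in\tO_{K_\Lambda}$ as lifts of $T-v$ and $T'-v$. Self-similarity ($\Lambda=\lambda I$) combined with $\tilde{\Phi}_{K_\Lambda}$-fixedness yields $\tilde{\Phi}_{K_\Lambda}^k(\tT-v)=\tT-\lambda^k v$ and $\tilde{\Phi}_{K_\Lambda}^k(\tT'-v)=\tT'-\lambda^k v$ for all $k\in\Z$. The first-component contribution to $\bar{d}$ is at most $1$, since $d\le 1$ on $\Omega$. For the second component, choose $k_0$ with $\lambda^{k_0}\langle u,v\rangle\ge R$: when $k\ge k_0$ we have $\lambda^k v\in H$, so by the previous paragraph $\tp^T_\tx(\lambda^k v)=\tp^{T'}_{\tx'}(\lambda^k v)$ and the $\td$-separation vanishes; when $k\le k_0$, the points $\lambda^k v$ all lie in the compact segment $\{sv:0\le s\le\lambda^{k_0}\}$, on which the continuous maps $\tp^T_\tx,\tp^{T'}_{\tx'}$ have compact images and hence uniformly bounded $\td$-separation. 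Together these yield a uniform bound on $\bar{d}(\tilde{\Phi}_{K_\Lambda}^k(\tT-v),\tilde{\Phi}_{K_\Lambda}^k(\tT'-v))$ for $k\in\Z$, establishing $T-v\sim_{gsK_\Lambda}T'-v$.

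The main obstacle is the covering-space extension in the middle paragraph: verifying that lift-agreement at a single interior point of a shared tile propagates to the entire half-space $H$. This rests on recognizing that $\tp^T_\tx$ and $\tp^{T'}_{\tx'}$, though different on $\R^n\setminus H$, restrict on $H$ to lifts of one and the same continuous map, and then on path-connectedness of $H$. The remainder of the argument is routine: the trivial bound $d\le1$ on $\Omega$ handles the first-component distance, and the exponential sweep $k\mapsto\lambda^k v$ of $v$ into $H$ takes care of the second-component distance.
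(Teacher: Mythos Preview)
Your argument is correct and follows the same overall architecture as the paper's proof: invoke \cite{BO} for an asymptotic pair, use Proposition~\ref{K-equiv} to obtain fixed lifts $\tT,\tT'$ and the relation $T\sim_{gsK_\Lambda}T'$, and then bound $\bar d(\tilde\Phi_K^k(\tT-v),\tilde\Phi_K^k(\tT'-v))$ by exploiting that $\lambda^k v$ eventually enters the half-space of agreement. The one substantive difference is in how you establish $\pi_2(\tT-w)=\pi_2(\tT'-w)$ for $w\in H$. The paper does this pointwise via a dynamical argument: given $B_0[T-w]=B_0[T'-w]$, one has $p^T(w)=p^{T'}(w)$, and then hyperbolicity of $\bar f_*$ together with the fixedness of $\tT,\tT'$ forces the two lifts to coincide at $w$ (essentially, the deck transformation separating them would have to be $\bar f_*$-fixed, hence zero). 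You instead observe once and for all that $p^T|_H=p^{T'}|_H$ as maps $H\to X$, so $\tp^T_{\tx}|_H$ and $\tp^{T'}_{\tx'}|_H$ are two lifts of the same map agreeing at $v_0$, and invoke uniqueness of lifts over the connected set $H$. Your route is cleaner and more elementary (it uses only the covering-space axiom and connectedness, not hyperbolicity at this step), while the paper's route has the advantage of working pointwise without needing to verify agreement of $p^T$ and $p^{T'}$ on a whole connected region. One small technical point: since $X$ is the collared complex, $p^T(w)=p^{T'}(w)$ requires agreement of collared tiles at $w$, so strictly you should shrink $H$ by a tile diameter; this does not affect the argument.
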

\begin{proof}
Let $T,T'$ be an asymptotic pair with direction vector $u$, as above. Then $T\sim_{gsK_{\Lambda}} T'$ by Proposition \ref{K-equiv}. There are then lifts $\tT,\tT'$ that are both fixed by $\tilde{\Phi}_K^m$ for some $m>0$. Then $\tT-v$ and $\tT'-v$ are lifts of $T-v$ and $T'-v$ with the properties: $\tilde{\Phi}_K^{km}(\tT-v)\to \tT$ and $\tilde{\Phi}_K^{km}(\tT'-v)\to \tT'$ as $k\to-\infty$. Also, if $\langle u,v\rangle>0$, there is $k'$ so that $\langle u,\lambda^{k'}v\rangle\ge R$, and it follows that $d(\Phi^{mk}(T-v),\Phi^{mk}(T'-v))\to 0$ as $k\to\infty$. We saw in the proof of Proposition \ref{K-equiv} that if 
$\tT$ is fixed by $\tilde{\Phi}_K^{m}$, $T'$ is fixed by $\Phi^m$, and $\pi_2(\tT-w)=\pi_2(\tT'-w)$, then $\tT'$ is also fixed by $\tilde{\Phi}_K^m$. It follows from hyperbolicity of $\tilde{\Phi}_K$ on deck transformations that, conversely, if $\tT$ and $\tT'$ are both fixed by $\tilde{\Phi}_K$ and $B_0[T-w]=B_0[T'-w]$ (so that $p^T(w)=p^{T'}(w)$) then $\pi_2(\tT-w)=\pi_2(\tT'-w)$. If $\langle u,v\rangle>0$, then $B_0[T-w]=B_0[T'-w]$ for $w=\lambda^k v$ and $k>k'$. Consequently, for such $v$, $\bar{d}(\tilde{\Phi}_K^{mk}(\tT-v),\tilde{\Phi}_K^{mk}(\tT'-v))=d(\Phi^{mk}(T-v),\Phi^{mk}(T'-v))+
\td(\pi_2(\tT-\lambda^{mk}v),\pi_2(\tT'-\lambda^{mk}v))\to 0$ as $k\to\infty$, so $T-v\sim_{gsK_{\Lambda}}T'-v$.
\end{proof}

\section{Global shadowing and regional proximality for Pisot family substitutions}\label{global shadowing}

A fundamental result of Auslander (\cite{Aus}) asserts that the structure relation of the maximal equicontinuous factor for an abelian group action on a compact metrizable space is given by regional proximality. In the context of the $\R^n$-action on a tiling space $\Omega$, two tilings
$T,T'\in\Omega$ are {\bf regionally proximal} provided, for every $\epsilon>0$, there are $S,S'\in\Omega$
and $v\in\R^n$ so that: (i) $d(T,S)<\epsilon$; (ii) $d(T',S')<\epsilon$; and (iii) $d(S-v,S'-v)<\epsilon$.
If $T$ and $T'$ are regionally proximal, we will write $T\sim_{rp}T'$. Thus $T\sim_{rp}T'$ if and only if $g(T)=g(T')$, where $g$ is the factor map onto the maximal equicontinuous factor of the translation action on $\Omega$.

We thus have two closed equivalence relations on a substitution tiling space $\Omega_{\Phi}$: regional proximality and global shadowing. The aim of this section is to compare these two relations.
The following Proposition \ref{gs implies rp} shows that for unimodular hyperbolic substitutions, global shadowing is stronger than regional proximality. Proposition \ref{rp implies gs} establishes that if the substitution is Pisot family, and $D=D(\Lambda)=D(GR)$, then the regional proximal and global shadowing relations coincide. From this we will easily deduce Theorem \ref{gs=rp}, and Corollaries \ref{g finite-to-one} and \ref{hypcon implies Pcon}.

First, a lemma about regional proximality.

\begin{lemma}\label{rp} Suppose that $T,S\in\OP$, $k_i\to\infty$, and $v_i\in\R^n$ are such that
$\Phi^{k_i}(T)\to\bar{T}\in\OP$, $\Phi^{k_i}(S)\to\bar{S}\in\OP$, and $d(\Phi^{k_i}(T-v_i),\Phi^{k_i}(S-v_i))\to0$. Then $\bar{T}\sim_{rp}\bar{S}$.
\end{lemma}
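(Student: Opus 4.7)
The plan is to prove this by essentially unpacking the hypothesis and using the fundamental intertwining relation $\Phi(T-v) = \Phi(T) - \Lambda v$, which iterates to $\Phi^{k}(T-v) = \Phi^{k}(T) - \Lambda^{k} v$. Given the convergences in the hypothesis, the lemma should follow directly from the definition of regional proximality by choosing appropriate approximating tilings along the sequence $k_i$.

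Concretely, set $w_i := \Lambda^{k_i} v_i$. Then the intertwining relation gives
\[
\Phi^{k_i}(T - v_i) = \Phi^{k_i}(T) - w_i \quad \text{and} \quad \Phi^{k_i}(S - v_i) = \Phi^{k_i}(S) - w_i,
\]
so the third hypothesis becomes $d(\Phi^{k_i}(T) - w_i, \Phi^{k_i}(S) - w_i) \to 0$. Now fix $\epsilon > 0$. Using $\Phi^{k_i}(T) \to \bar{T}$, $\Phi^{k_i}(S) \to \bar{S}$, and the displayed convergence, pick $i$ large enough that each of $d(\Phi^{k_i}(T), \bar{T})$, $d(\Phi^{k_i}(S), \bar{S})$, and $d(\Phi^{k_i}(T) - w_i, \Phi^{k_i}(S) - w_i)$ is less than $\epsilon$. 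Then $\tilde{T} := \Phi^{k_i}(T)$, $\tilde{S} := \Phi^{k_i}(S)$, and $v := w_i$ witness regional proximality of $\bar{T}$ and $\bar{S}$ at scale $\epsilon$, since all three required inequalities are satisfied. Letting $\epsilon \to 0$ gives $\bar{T} \sim_{rp} \bar{S}$.

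There is essentially no technical obstacle here; the lemma is a direct bookkeeping exercise once one notes that substitution iteration converts the translation by $v_i$ before substitution into a translation by $\Lambda^{k_i} v_i$ after substitution. The role of the lemma in the paper is apparently to provide a convenient sufficient condition for regional proximality that one can feed a global-shadowing pair into, so the main content is really just setting up the correct translation vector $w_i$ and noting that the limits $\bar{T}, \bar{S}$ inherit the proximality condition from the sequence along which they arise.
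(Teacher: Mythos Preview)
Your proof is correct and, in fact, more elementary than the paper's. You verify the definition of regional proximality directly: for each $\epsilon>0$ you exhibit the witnesses $S=\Phi^{k_i}(T)$, $S'=\Phi^{k_i}(S)$, $v=\Lambda^{k_i}v_i$ required by conditions (i)--(iii). The paper instead routes through the maximal equicontinuous factor map $g$: from $d(\Phi^{k_i}(T)-\Lambda^{k_i}v_i,\Phi^{k_i}(S)-\Lambda^{k_i}v_i)\to0$ it deduces, by uniform continuity of $g$, that the $g$-images of the translated tilings get close, and then uses equicontinuity of the $\R^n$-action on the factor to cancel the translation and conclude $g(\bar T)=g(\bar S)$, invoking Auslander's characterization $T\sim_{rp}T'\iff g(T)=g(T')$. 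Your argument avoids both the factor map and Auslander's theorem; the paper's argument is perhaps more conceptual (it shows \emph{why} the translation is irrelevant, via equicontinuity on the factor), but yours is self-contained and shorter.
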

\begin{proof}
Let $g$ be the map of $\OP$ onto the maximal equicontinuous factor $\OP/\sim_{rp}$. If $d(\Phi^{k_i}(T-v_i),\Phi^{k_i}(S-v_i))=d(\Phi^{k_i}(T)-\Lambda^{k_i}v_i,\Phi^{k_i}(S)-\Lambda^{k_i}v_i)\to0$, then
$d(g(\Phi^{k_i}(T))-\Lambda^{k_i}v_i,g(\Phi^{k_i}(S))-\Lambda^{k_i}v_i)\to0$ by uniform continuity of $g$. Then, by equicontinuity of the $\R^n$-action on $\OP/\sim_{rp}$, $d(g(\Phi^{k_i}(T)),g(\Phi^{k_i}(S)))\to0$. Thus $d(g(\bar{T}),g(\bar{S}))=0$ and $\bar{T}\sim_{rp}\bar{S}$.
\end{proof}

Given $\tx,\tx'\in\tX=\tX_{K_{\Lambda}}$ and a path $\tilde{\gamma}$ in $\tX$ from $\tx$ to $\tx'$, let 
$l(\tilde{\gamma}):=l(\gamma)$, where $\gamma$ is the path $\gamma=\pi\circ\tilde{\gamma}$ in $X$ (see Lemma \ref{return vector}). If $\tilde{\gamma}'$ is any other such path, then the concatenation of $\tilde{\gamma}'$ with the reverse of $\tilde{\gamma}$ is a loop in $\tX$ and hence must project to an element of $K_{\Lambda}$, that is, to a loop of displacement 0. Thus the displacement $l(\tilde{\gamma})$ depends
only on $\tx$ and $\tx'$, and not on $\tilde{\gamma}$.

\begin{Proposition}\label{gs implies rp} Suppose that $\Phi$ is unimodular and hyperbolic. Then the
global shadowing relation is contained in the regional proximality relation: $T\sim_{gsK_{\Lambda}}T'\implies T\sim_{rp}T'$.
\end{Proposition}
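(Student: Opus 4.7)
The plan is to establish, for lifts $\tilde T=(T,\tilde x),\tilde T'=(T',\tilde x')\in\tOP_{K_\Lambda}$ witnessing global shadowing (so $\bar d(\tilde\Phi_{K_\Lambda}^k\tilde T,\tilde\Phi_{K_\Lambda}^k\tilde T')\le C$ for all $k\in\Z$), a rigidity consequence that then lets us construct translations $v_k\in\R^n$ with $d(T-v_k,T'-v_k)\to 0$, directly verifying regional proximality by taking $S=T,S'=T',v=v_k$ in the definition of $\sim_{rp}$.

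First, I would introduce the path-displacement $\delta(\tilde y,\tilde y')\in\R^n$ for $\tilde y,\tilde y'\in\tilde X_{K_\Lambda}$, defined as $l(\pi_{K_\Lambda}\circ\tilde\gamma)$ for any path $\tilde\gamma$ in $\tilde X_{K_\Lambda}$ from $\tilde y'$ to $\tilde y$; this is well-defined precisely because $K_\Lambda=\ker l$. Lemma \ref{return vector} gives the transformation rule $\delta(\tilde f_{K_\Lambda}\tilde y,\tilde f_{K_\Lambda}\tilde y')=\Lambda\,\delta(\tilde y,\tilde y')$, and the defining property (2) of $\td$ implies that bounded $\td$-distance forces bounded $|\delta|$. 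Combined with $\delta(\tilde x_k,\tilde x'_k)=\Lambda^k\delta(\tilde x,\tilde x')$ being uniformly bounded for $k\in\Z$ and the expansion of $\Lambda$ on $\R^n$, we obtain $\delta(\tilde x,\tilde x')=0$.

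Next, I would use this rigidity to promote the constant-size patch agreement coming from $d(\Phi^{-k}T,\Phi^{-k}T')\le C$ into a growing one. For each $k\in\N$, there exist shifts $w_{-k},w'_{-k}$ with $|w_{-k}|,|w'_{-k}|\le C/2$ and $B_{1/C}[\Phi^{-k}T-w_{-k}]=B_{1/C}[\Phi^{-k}T'-w'_{-k}]$. A monodromy computation shows that $w_{-k}-w'_{-k}=l([h_0])$ for the deck element $[h_0]$ identifying the two lifts $\tilde p^{\Phi^{-k}T}_{\tilde x_{-k}}(w_{-k})$ and $\tilde p^{\Phi^{-k}T'}_{\tilde x'_{-k}}(w'_{-k})$ of the common base point; the vanishing of $\delta$ together with torsion-freeness of $H_1(X;\Z)/K_\Lambda\simeq GR(\Phi)$ allows the matching to be chosen with $[h_0]=0$, i.e., with $w_{-k}=w'_{-k}$. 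With this common shift, setting $v_k:=\Lambda^k w_{-k}$ and applying $\Phi^k$ to the agreement at level $-k$ inflates it to agreement of $T-v_k$ and $T'-v_k$ on a ball of radius $\sim|\Lambda|^k/C$, so $d(T-v_k,T'-v_k)=O(|\Lambda|^{-k})\to 0$, which gives $T\sim_{rp}T'$.

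The hardest step I expect is the construction of common shifts in the second part: converting the abstract vanishing of $\delta$ into the concrete geometric statement that a matching $(w_{-k},w'_{-k})$ with $w_{-k}=w'_{-k}$ exists. This requires a careful analysis of how the curves $v\mapsto\tilde p^{\Phi^{-k}T}_{\tilde x_{-k}}(v)$ and $v\mapsto\tilde p^{\Phi^{-k}T'}_{\tilde x'_{-k}}(v)$ sit in $\tilde X_{K_\Lambda}$, combined with the finite local complexity of the substitution; if this direct route proves too subtle, I would fall back on Lemma \ref{rp} applied to a subsequence $k_i\to-\infty$, constructing $v_i$ so that $d(\Phi^{k_i}(T-v_i),\Phi^{k_i}(T'-v_i))\to 0$ and then transferring $\sim_{rp}$ from the limits to $T,T'$ using closedness and $\Phi$-invariance of regional proximality.
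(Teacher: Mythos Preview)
Your displacement argument showing $\delta(\tilde x,\tilde x')=0$ is correct and matches what the paper does. The gap is in your second step: the claim that ``vanishing of $\delta$ together with torsion-freeness allows the matching to be chosen with $[h_0]=0$'' is circular. Your own monodromy identity says $l([h_0])=w_{-k}-w'_{-k}$; since $l$ is injective on $H_1(X;\Z)/K_\Lambda$, the element $[h_0]$ is \emph{determined} by $w_{-k}-w'_{-k}$, and the latter is fixed by the local geometry of the two tilings (you may translate both $w_{-k},w'_{-k}$ by the same vector within the agreement patch, but that does not change their difference). So you cannot simply choose $[h_0]=0$. More conceptually, what you are trying to prove with $S=T$, $S'=T'$ is \emph{proximality} of $T$ and $T'$, which is strictly stronger than regional proximality; there is no reason global shadowing should give that.

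Your fallback via Lemma~\ref{rp} is the right instinct, but as stated it has the same problem: you still need $v_i$ with $d(\Phi^{k_i}(T-v_i),\Phi^{k_i}(T'-v_i))\to 0$, and ``transferring $\sim_{rp}$ from the limits to $T,T'$'' does not work since the limits of $\Phi^{k_i}(T),\Phi^{k_i}(T')$ are generally not $T,T'$. The paper's missing idea is a \emph{chain}: from boundedness of $\bar d$ one extracts $k_i\to-\infty$ along which $\tilde x_{k_i},\tilde x'_{k_i}$ lie in a fixed pair of cells (up to a common deck translation), so there are paths $\tilde\gamma^i$ from $\tilde x_{k_i}$ to $\tilde x'_{k_i}$ decomposable into a \emph{fixed} number $l$ of pieces $\gamma^i_j(t)=p(S^i_j-tv^i_j)$ with $S^i_1=\Phi^{k_i}(T)$ and $S^i_l=\Phi^{k_i}(T')$. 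Passing to a subsequence, $\Phi^{|k_i|}(S^i_j+v^i_1+\cdots+v^i_{j-1})\to\bar S_j$; since $p(S^i_j-v^i_j)=p(S^i_{j+1})$ in the collared complex, Lemma~\ref{rp} gives $\bar S_j\sim_{rp}\bar S_{j+1}$. The endpoints are anchored because $\Phi^{|k_i|}(S^i_1)=T$, and your displacement computation $\delta=0$ is exactly what forces $\bar S_l=T'$ rather than a translate of it.
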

\begin{proof}
We suppose that the A-P complex $X$ is collared. In this situation, if $S,S'\in\OP$ are such that $p(S)=p(S')$, then $d(\Phi^k(S),\Phi^k(S'))\to0$ as $k\to\infty$.

Suppose that $\tT=(T,\tx),\tT'=(T',\tx')\in\tilde{\Omega}$ are such that $\bar{d}(\tilde{\Phi}^{k}(\tT),\tilde{\Phi}^{k}(\tT'))$, $k\in\Z$, is bounded. There are then: a sequence $k_i\to-\infty$; $[h_i]$ in the group
$H_1(X;\Z)/K_{\Lambda}$ of deck transformations of $\tX$; and $n$-cells $\tilde{\tau},\tilde{\tau}'$ of $\tX$ so that $\tx_i\in[ h_i](\tilde{\tau})$ and $\tx'_i\in [h_i](\tilde{\tau}')$ for all $i$, where $(\tT_i,\tx_i):=
\tilde{\Phi}^{k_i}(\tT)$ and $(\tT'_i,\tx'_i):=
\tilde{\Phi}^{k_i}(\tT')$. Let $\tilde{\gamma}^i$ be a path in $\tX$ from $\tx_i$ to $\tx'_i$. We may take $\tilde{\gamma}^i=\tilde{\gamma}^i_1*\cdots*\tilde{\gamma}^i_l$ with each $\tilde{\gamma}^i_j$ a lift of a path $\gamma^i_j$ in $X$ of the form
$\gamma^i_j(t)=p(S^i_j-tv^i_j)$, $t\in[0,1]$, for some $S^i_j\in\OP$ and $v^i_j\in\R^n$ with $S^i_1=T_i$ and $S^i_l=T'_i $. (It is key to this argument that $l$ is constant, independent of $i$.) 
 We have $p(S^i_j-v^i_j)=p(S^i_{j+1})$ for $j=1,\ldots,l-1$ and all $i$. Passing to a subsequence, we may assume that $\Phi^{|k_i|}(S^i_j+v^i_1+\cdots+v^i_{j-1})\to\bar{S}_j\in\OP$ as $i\to\infty$ for $j=2,\ldots,l$. Let $\bar{S_1}=T$. We conclude from Lemma \ref{rp} that $\bar{S}_j\sim_{rp}\bar{S}_{j+1}$ for $j=1\ldots,l-1$. 
 
 Now, since $\tilde{\Phi}^{|k_i|}\circ\tilde{\gamma}_i$ is a path from $\tx$ to $\tx'$, the vector $\Lambda^{|k_i|}(v^i_1+\cdots+v^i_{l-1})=:v$ is constant (this is the displacement vector $l(\tilde{\Phi}^{|k_i|}\circ\tilde{\gamma}_i)$ of the path $\tilde{\Phi}^{|k_i|}\circ\tilde{\gamma}_i$ in $\tX$). Thus $\bar{S}_l=T'+v$. We have a path in $\tX$, call it $\tilde{\gamma}$, from $\tx$ to $\tx'$ with displacement $v$. Then  any path in $\tX$ from $\tf^k(\tx)$ to $\tf^k(\tx')$ has displacement $\Lambda^kv$: since $\bar{d}(\tilde{\Phi}^k(\tT),\tilde{\Phi}^k(\tT'))$ is bounded, and $\tilde{\Phi}^k(\tT)=(\Phi^k(T),\tf^k(\tx))$ and  $\tilde{\Phi}^k(\tT')=(\Phi^k(T'),\tf^k(\tx'))$, $v$ must be zero - that is, $\bar{S}_l=T'$. Since $\sim_{rp}$ is a translation invariant equivalence relation, $T\sim_{rp}T'$.
\end{proof}

\begin{Proposition}\label{rp implies gs}  Suppose that $\Phi$ is a unimodular Pisot family $n$-dimensional substitution with linear expansion $\Lambda$. If $D(\Lambda)=D(GR)=D$, then the global shadowing relation contains the regional proximal relation: $T\sim_{rp}T'\implies T\sim_{gsK_{\Lambda}}T'$.
\end{Proposition}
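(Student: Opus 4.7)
The plan is to upgrade the $F_A$-semiconjugacy $G:\OP\to\T^D$ of Proposition \ref{main prop} (taken with $K=K_{\Lambda}$, so that $N=D$) to a Kronecker $\R^n$-semiconjugacy. Concretely, I will show that the lift $\tG:\tO_{K_{\Lambda}}\to\R^D$ satisfies the exact identity $\tG(\tT-v)=\tG(\tT)+\alpha(v)$ for every $(\tT,v)$, where $\alpha:\R^n\to E^u\subset\R^D$ is an explicit $\R$-linear map. This makes the induced $\R^n$-action on $\T^D$ (translation by $\alpha(v)$) equicontinuous, and by maximality of the equicontinuous factor $g$, $G$ then factors as $G=h\circ g$ for some continuous $h$. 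Proposition \ref{main prop}(iii) closes the loop: $T\sim_{rp}T' \Leftrightarrow g(T)=g(T') \Rightarrow G(T)=G(T') \Leftrightarrow T\sim_{gsK_{\Lambda}}T'$.

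The setup for $\alpha$ uses the Pisot family hypothesis with $D(\Lambda)=D(GR)=D$. Split $\R^D=E^u\oplus E^s$ with respect to $A$. By Lemma \ref{eigenvalues}, the eigenvalues of $A|_{E^u}$ are exactly $spec(\Lambda)$ (with matching multiplicities), while those on $E^s$ are algebraic conjugates of modulus $<1$ and hence absent from $spec(\Lambda)$. The linear intertwiner $L_0:\R^D\to\R^n$ of Lemma \ref{eigenvalues} therefore kills every generalized $A$-eigenspace for an $E^s$-eigenvalue; by the dimension count $\dim E^u=n$ (forced by $D(\Lambda)=D=D(GR)$ and the Pisot family condition) one has $\ker L_0=E^s$ and $L_0|_{E^u}:E^u\to\R^n$ is a linear isomorphism. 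Define $\alpha:=(L_0|_{E^u})^{-1}:\R^n\to E^u$, a linear map with $A\alpha=\alpha\Lambda$. For $v\in GR(\Phi)$, the integer vector $\bar\Gamma_*(l^{-1}(v))\in\Z^D$ projects onto $E^u$ to give exactly $\alpha(v)$, since $L_0(\bar\Gamma_*(l^{-1}(v)))=v$ and $\ker L_0=E^s$.

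The heart of the proof is the telescoping. With $y_k:=\tilde\Gamma_0(\hat{\tf}_K^k\tT)$ and $y_k^v:=\tilde\Gamma_0(\hat{\tf}_K^k(\tT-v))$ as in the proof of Proposition \ref{main prop}, I analyze $y_k^v-y_k$ for $k\to+\infty$ and $k\to-\infty$ separately. For $k\to+\infty$, the vector $\Lambda^k v$ is large; choose a return vector $v'_k\in GR(\Phi)$ of $\Phi^kT$ with $|v'_k-\Lambda^k v|\le C_0$, using the relative denseness of return vectors in the minimal FLC tiling (with $C_0$ uniform over $\OP$). Because $v'_k$ is a return vector of $\Phi^kT$, the translation lift ends at $\tf_K^k(\tx)+l^{-1}(v'_k)$, giving
\[
y_k^v-y_k=\bar\Gamma_*(l^{-1}(v'_k))+O(1)=\alpha(v'_k)+a^s(v'_k)+O(1),
\]
where $a^s(v'_k)\in E^s$ and the $O(1)$ error comes from uniform continuity of $\tilde\Gamma$ along the short correcting path of length $\le C_0$ (uniform in $(k,T)$ because $\tilde\Gamma$-differences are deck-invariant). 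Projecting to $E^u$ kills $a^s(v'_k)$, and $\alpha(v'_k)=A^k\alpha(v)+O(1)^u$ by linearity of $\alpha$ combined with $|v'_k-\Lambda^kv|\le C_0$. Applying $A^{-k}$ contracts the $E^u$ error to $0$, yielding $\tG^u(\tT-v)-\tG^u(\tT)=\alpha(v)$. For $k\to-\infty$, $|\Lambda^{-k}v|\to 0$, so a direct Lipschitz estimate on the short translation path gives $|y^v_{-k}-y_{-k}|\le L|\Lambda|^{-k}|v|$; multiplying by $A^k$ on $E^s$, which contracts at rate $\eta^k$ with $\eta<1$, the product vanishes and $\tG^s(\tT-v)-\tG^s(\tT)=0=\alpha(v)^s$. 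Combining the two pieces, $\tG(\tT-v)=\tG(\tT)+\alpha(v)$.

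Passing to $\T^D$, $G(T-v)=G(T)+\alpha(v)\pmod{\Z^D}$ is Kronecker, and $G$ is thus an equicontinuous factor of the $\R^n$-translation action on $\OP$; the factoring $G=h\circ g$ combined with Proposition \ref{main prop}(iii) completes the argument as outlined at the start. The main technical hurdle is the uniform $O(1)$ control in the $E^u$-telescoping, which rests on two points: (i) the return-vector approximation $|v'_k-\Lambda^kv|\le C_0$ afforded by relative denseness of $GR(\Phi)$-return-vectors in each tiling, uniformly in $(k,T)$; and (ii) the Pisot family structure, used to recognize $\alpha(v'_k)$ as the $E^u$-projection of $\bar\Gamma_*(l^{-1}(v'_k))$ via $\ker L_0=E^s$. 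Both ingredients are precisely where $D(\Lambda)=D(GR)=D$ and the Pisot family condition enter decisively.
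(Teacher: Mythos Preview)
Your argument is correct, and it takes a genuinely different route from the paper. The paper works entirely inside the cover $\tX_{K_\Lambda}$: it introduces \emph{sheets} $\mathcal{S}(T,\tx)$, proves a geometric bound (if a deck transformation $[h]$ sends a sheet to an overlapping sheet then $|[h]^s|\le B$), recognizes the resulting set $L(\{|a^s|\le B\})$ as a model set with the Meyer property, and uses this to pin down lifts $\tT,\tT'$ whose $\tilde\Phi_K$-orbits stay a bounded distance apart. In other words, the paper establishes $\mathrm{rp}\Rightarrow\mathrm{gs}$ by a direct combinatorial/metric argument in the cover, and only afterwards (in the proof of Theorem~\ref{gs=rp}) combines the two propositions to conclude $G=g$.

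You instead go straight for the identity $\tG(\tT-v)=\tG(\tT)+\alpha(v)$, which immediately exhibits $G$ as an equicontinuous (Kronecker) factor of the $\R^n$-action; the factoring $G=h\circ g$ then gives $\mathrm{rp}\Rightarrow G(T)=G(T')\Rightarrow\mathrm{gs}$ via Proposition~\ref{main prop}(iii). Your use of the hypotheses is clean: the Pisot family condition plus $D(\Lambda)=D(GR)$ is exactly what forces $\ker L_0=E^s$ and $\dim E^u=n$, making $\alpha$ well-defined and giving the crucial identification of $(\bar\Gamma_*(l^{-1}(v'_k)))^u$ with $\alpha(v'_k)$. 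The $O(1)$ controls are legitimate: the short correcting path has $\R^n$-parameter length at most $C_0$, and $\tilde\Gamma_0$ is the lift of the continuous map $G_0=\Gamma\circ p$ on the compact space $\Omega$, so increments along translation paths of bounded length are uniformly bounded. (Your Lipschitz claim for the $E^s$-part is stronger than needed; boundedness of $|y^v_{-k}-y_{-k}|$ already suffices since $A^k$ contracts on $E^s$.) One small remark: your argument actually proves more than the proposition---it yields the Kronecker intertwining that the paper obtains only after assembling Propositions~\ref{gs implies rp} and~\ref{rp implies gs} into Theorem~\ref{gs=rp}. The paper's sheet/model-set machinery, on the other hand, gives structural information about the cover (e.g., Claim~\ref{sheets}) that may be of independent use.
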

\begin{proof}
Let $f:X\to X$ be the substitution induced map on the collared A-P complex for $\Phi$. Let $A$ represent $\bar{f}_*:H_1(X;\Z)/K_{\Lambda}\to H_1(X;\Z)/K_{\Lambda}$
in some basis, say $\{[h_1],\ldots,[h_D]\}$, which we fix for the remainder of this proof. $A$ is then hyperbolic, and, as an isomorphism of $\R^D$, has invariant stable and unstable spaces $E^s$ and $E^u$ with $E^s\oplus E^u=\R^D$. There are $\eta\in (0,1)$ and $C$ so that $|A^kx|\le C\eta^k|x|$ for $x\in E^s$, $k\in\N$ and $|A^{-k}x|\le C\eta^k|x|$ for $x\in E^u$, $k\in\N$. Let $x=x^s+x^u$ be the decomposition of $x\in\R^D$ into stable and unstable parts. Thus, for each $[h]\in H_1(X;\Z)/K_{\Lambda}$, there are corresponding $[h]^s\in E^s,[h]^u\in E^u$. 

Given $T\in\OP$ and $\tx\in\tX:=\tX_{K_{\Lambda}}$ with $\pi(\tx)=p(T)$, we will call the subset $\mathcal{S}(T,\tx):=\{\tilde{p}^T_{\tx}(v):v\in\R^n\}$ of $\tX$ a {\em sheet}.

\begin{claim}\label{sheets} There is $B\in\R$ so that if $\mathcal{S}$ is any sheet in $\tX$ and $[h]\in H_1(X;\Z)/K_{\Lambda}$ is such that $[h](\mathcal{S})\cap\mathcal{S}$ contains an $n$-cell of $\tX$, then $|[h]^s|\le B$.
\end{claim}
To prove the claim, let $m\in\N$ and $\alpha<1$ be so that $|A^{mk}x|\le \alpha^k|x|$ for all $k\in\N$ and $x\in E^s$. We may assume that $m$ is large enough so that, for each tile $\tau$, $\Phi^m(\tau)$ contains, in its interior, a tile of every type. Let $B_1$ be large enough so that if $\tilde{\tau}$ is any $n$-cell in $\tX$, $\tilde{\tau}_1,\tilde{\tau}_2$ are two $n$-cells in $\tf^{2m}(\tilde{\tau})$ of the same type,  and $[h](\tilde{\tau}_1)=\tilde{\tau}_2$, then
$|[h]^s|\le B_1$. Now let $T\in\OP$ be fixed by $\Phi^m$ and let $\tx\in\tX$ be fixed by $\tf^m$, with
$\pi(\tx)=p(T)$. The sheet $\mathcal{S}:=\{\tilde{p}^T_{\tx}(v):v\in\R^n\}$ is invariant under $\tf^m$.  We may assume that $\tx$ is in the interior of an $n$-cell $\tilde{\tau}$ of $\tX$ so that $\mathcal{S}=\cup_{k\in\N}\tilde{\Phi}^{km}(\tilde{\tau})$. Let $B_2$ be large enough so that $\alpha B_2+B_1\le B_2$. 

{\bf Subclaim:} If $[h](\tx)\in\mathcal{S}$ then $|[h]^s|\le B_2$.

To see that this is the case, let $\mathcal{S}_k:=\cup_{j=0,\ldots,k}\tf^{mj}(\tilde{\tau})$ for $k=0,1,\ldots$. We prove the claim, with $\mathcal{S}$ replaced by $\mathcal{S}_k$, by induction on $k$.
If $[h](\tx)\in\mathcal{S}_0$, then $[h]^s=0$ and if $[h](\tx)\in\mathcal{S}_1$, then $|[h]^s|\le B_1<B_2$. Suppose the claim is true with $\mathcal{S}$ replaced with $\mathcal{S}_k$, $k\ge 1$, and suppose that $[h](\tx)\in\mathcal{S}_{k+1}$. Let $\tilde{\tau}'$ be the $n$-cell of $\mathcal{S}_k$ with $[h](\tx)\in\tf^{2m}(\tilde{\tau}')$. There is then an $n$-cell of the same type as $\tilde{\tau}$ in $\tf^m(\tilde{\tau}')\subset\mathcal{S}_k$. Thus there is $[h_1]$ with $[h_1](\tx)\in\tf^m(\tilde{\tau}')$: by hypothesis, $|[h_1]^s|\le B_2$. Then $\tf^m([h_1](\tx))=[f^m_*(h_1)](\tx)\in\tf^{2m}(\tilde{\tau}')$, so that $|[h]^s-[f^m_*(h_1)]^s|=|[h-f^m_*(h_1)]^s|\le B_1$. Also, $|[f^m_*(h_1)]^s|\le\alpha|[h_1]^s|\le\alpha B_2$, so that $|[h]^s|\le \alpha B_2+B_1\le B_2$, completing the inductive argument and establishing the subclaim.

For each of the finitely many different types of $n$-cell in $\tX$, represented, say, by $\tilde{\tau}_1,\ldots,\tilde{\tau}_r$, there is thus a sheet $\mathcal{S}_i=\mathcal{S}(T_i,\tx_i)$ so that if the $n$-cell $\tilde{\tau}\subset
\mathcal{S}_i$ has the same type as $\tilde{\tau}_i$, and $[h](\tilde{\tau})\subset\mathcal{S}_i$, then $|[h]^s|\le 2B_2:=B$. Now if $\mathcal{S}=\mathcal{S}(T,\tx)$ is any sheet with $\tilde{\tau}\subset \mathcal{S}\cap[h](\mathcal{S})$, let $\tilde{\tau}_i$ be of the same type as $\tilde{\tau}$. 
Let $Q$ be a connected finite patch of $T$ containing the tiles $\tau$ and $\tau-l([h])$, with
$\tilde{p}^T_{\tx}(\tau)=\tilde{\tau}$, and hence also $\tilde{p}^T_{\tx}(\tau-l([h]))=[-h](\tilde{\tau})$.
There is then $v$ with $Q-v\subset T_i$. Then $[h](\tilde{p}^T_{\tx_i}(\tau-l[h]-v)=\tilde{p}^T_{\tx_i}(\tau-v)$, so $|[h]^s|\le B$. This establishes Claim \ref{sheets}.

Suppose that $T,T'\in\OP$ are such that $T\sim_{rp}T'$. For each $r=1,2,\ldots$ there are $S_r,S'_r\in\OP$ and $v_r,w_r\in\R^n$ so that: $B_r[T]=B_r[S_r]$; $B_r[S_r-v_r]=B_r[S'_r-v_r]$; $B_r[T'-w_r]=B_r[S'_r]$; and $w_r\to0$ as $r\to\infty$. Pick a lift $\tT=(T,\tx)\in\tilde{\Omega}$, then choose lifts
$\tilde{S}_r=(S_r,\tx),\tilde{S}'_r=(S'_r,\tx'_r)\in\tilde{\Omega}$ so that $\tilde{p}^{S_r}_{\tx}(v_r)=\tilde{p}^{S'_r}_{\tx'_r}(v_r)$. 

\begin{claim}\label{large r} For sufficiently large $r$, $\tx'_r=:\tx'$ is constant and $w_r=0$.
\end{claim}
To see this, let $R$ be large enough so that every $R$-patch contains a tile of every type. Let $\tilde{\tau}$ be an $n$-cell of $\tX$ with $\tx\in\tilde{\tau}\subset \mathcal{S}(T,\tx)$ and for each 
$r\ge R$, let $\tilde{\tau}_r$ be an $n$-cell of $\tX$ with $\tilde{\tau}_r\subset \mathcal{S}(S_r,\tx)\cap\mathcal{S}(S'_r,\tx'_r)$ of the same type as $\tilde{\tau}$. Pick also $n$-cells $\tilde{\tau}_r'\subset \mathcal{S}(S'_r,\tx')$, of the same type as $\tilde{\tau}$, in $\tilde{p}^{S'_r}_{\tx'_r}(B_R(0))$. As $B_r[S'_r]=B_r[T'-w_r]$, we may take $\tilde{\tau}'_r=\tilde{p}^{S'_r}_{\tx'_r}(\tau'-w_r)$ with $\tau'\in T'$. 
Let $L:\R^D\to\R^n$ be the linear map given by $L((a_1,\ldots,a_D)):=\sum_{i=1}^Da_il([h_i])$. Then $LA=\Lambda L$ and, since $D=D(\Lambda)$, $L|_{E^u}:E^u\to\R^n$ is an isomorphism. 
Also, $L$ is one-to-one on $\Z^D$ (recall that $l:H_1(X;\Z)\to GR$ has kernel $K_{\Lambda}$).
Thus $\Gamma:=L(\{a\in\R^D:|a^s|\le B\})$, $B$ as in Claim \ref{sheets}, is a regular model set. In particular, $\Gamma$ is uniformly discrete, relatively dense, and has the Meyer property (see \cite{M}). Let $[h^1_r](\tilde{\tau})=\tilde{\tau}_r$ and $[h^2_r](\tilde{\tau}'_r)=\tilde{\tau}_r$. Then
$|[h^1_r]^s|\le B$ and $|[h^2_r]^s|\le B$ by Claim \ref{sheets}, so $l([h^1_r]),l([h^2_r])\in\Gamma$ and $\{l([h^1_r])-l([h^2_r]): r\ge R\}$ is uniformly discrete. But $(l([h^1_{r'}])-l([h^2_{r'}]))-(l([h^1_r])-l(h^2_r]))=w_r-w_{r'}$, and since $w_r\to 0$, it must be that $w_r=0$ for all sufficiently large $r$.
Thus, $l([h^1_r-h^2_r])$ is constant for large $r$, and since $l$ is injective on $H_1(X;\Z)$, $[h^1_r-h^2_r]:=[h]$ is constant for large $r$. Thus the $n$-cells $\tilde{\tau}'_r=[h](\tilde{\tau})$ and the patches $B_R[S'_r]$ are constant for all large $r$ and it follows that $\tx'_r$ is also constant for large $r$, establishing Claim \ref{large r}.

We can deduce more from the above. For large $r$ let $\tilde{\gamma}_1(t):=\tilde{p}^{S_r}_{\tx}(tv_r)$, $\tilde{\gamma}_2(t):=\tilde{p}^{S'_r}_{\tx'}((1-t)v_r)$, and $\tilde{\gamma}_3(t):=\tilde{p}^{T'}_{\tx'}(tw)$, $0\le t\le1$, where $w$ is such that $\tilde{p}^{T'}_{\tx'}(w)=[h](\tx)$. Then $|l([h])|=|l(\tilde{\gamma}_1*\tilde{\gamma}_2*\tilde{\gamma}_3)|=|v_r-v_r+w|\le R$. Let $\bar{R}_1:=sup\{\td(\ty,[h'](\ty)):\ty\in\tX,[h']\in\Gamma, |[h']|\le R\}$. Then $\bar{R}_1<\infty$ since $\Gamma$ is a discrete subset of $\R^n$ and the metric $\td$ is equivariant with respect to the deck transformations. Let $\bar{R}_2:=sup\{\td(\tilde{p}^S_{\ty}(w),\ty):|w|\le R, (S,\ty)\in\tilde{\Omega}\}$. $\bar{R}_2$ is also finite by equivariance. We have: $\td(\tx,\tx')\le \bar{R}_1+\bar{R}_2$.

Now let $\tT'=(T',\tx')$. For $k\in\N$, it is clear that the sheets $\tf^k(\mathcal{S}(T,\tx))=\mathcal{S}(\Phi^k(T),\tf^k(\tx))$, $\tf^k(\mathcal{S}(S_r,\tx))=\mathcal{S}(\Phi^k(S_r),\tf^k(\tx))$, $\tf^k(\mathcal{S}(S'_r,\tx'))=\mathcal{S}(\Phi^k(S'_r),\tf^k(\tx'))$, and $\tf^k(\mathcal{S}(T',\tx'))=\mathcal{S}(\Phi^k(T'),\tf^k(\tx'))$ overlap in the same manner as those sheets with $k=0$, and the above argument 
yields $\td(\tf^k(\tx),\tf^k(\tx'))\le \bar{R}_1+\bar{R}_2$. Now fix $k<0$.  Given $r$, there is $r'=r'(r)$ big enough
so that if $S,S'\in\OP$ satisfy $B_{r'}[S]=B_{r'}[S']$, then $B_r[\Phi^k(S)]=B_r[\Phi^k(S')]$. (This consequence of the invertibility of $\Phi$ is referred to as ``recognizability", see, for example, \cite{sol}.) For $r>0$ and $r'=r'(r)$, consider $\tilde{\Phi}^k((T,\tx))=(\Phi^k(T),\ty)$ and $\tilde{\Phi}^k((S_{r'},\tx))=(\Phi^k(S_{r'}),\ty')$. Since $B_0[\Phi^k(S_{r'})]=B_0[\Phi^k(T)]$, $\pi(\ty)=\pi(\ty')$ and there is $[h]\in H_1(X;\Z)/K_{\Lambda}$ so that $[h](\ty')=\ty$. Then $\tx=\tf^k(\ty')=\tf^k([h](\ty))=
\bar{f}_*^k([h])(\tf^k(\ty))=\bar{f}_*^k([h])(\tx)$, so $[h]=0$ ($\bar{f}_*$ is invertible on $H_1(X;\Z)/K_{\Lambda}$).
Thus $\ty'=\ty$. In this way, we see that the sheets determined by $\tilde{\Phi}^k(\tT)$, $\tilde{\Phi}^k(\tilde{S}_{r'})$, $\tilde{\Phi}^k(\tilde{S}'_{r'})$, and $\tilde{\Phi}^k(\tT')$, for $r'$ large, overlap as above so that $\bar{d}(\tilde{\Phi}^k(\tT)'\tilde{\Phi}^k(\tT'))\le d(\Phi^k(T),\Phi^k(T'))+\bar{R}_1+\bar{R}_2$. We conclude that $\bar{d}(\tilde{\Phi}^k(\tT),\tilde{\Phi}^k(\tT'))$, $k\in\Z$, is bounded; that is, $T\sim_{gsK_{\Lambda}}T'$.

\end{proof}

\begin{proof} (Of Theorem \ref{gs=rp}.)
By Proposition \ref{main prop}, $G(T)=G(T')$ if and only if $T\sim_{gsK_{\Lambda}}T'$, and, by the fundamental result of Auslander (\cite{Aus}), $g(T)=g(T')$ if and only if $T\sim_{rp}T'$. By Propositions \ref{gs implies rp} and \ref{rp implies gs}, $T\sim_{gsK_{\Lambda}}T'$ if and only if $T\sim_{rp}T'$. Thus we may identify $G$ with $g$.
\end{proof}

\begin{proof} (Of Corollary \ref{g finite-to-one}.)
The first statement follows immediately from Theorems \ref{main theorem} and \ref{gs=rp} and the characterization of $cr$ in \cite{BK}. For the second statement: the eigenvalues form subgroup of $\R^n$ so are relatively dense if and only if their linear span is all of $\R^n$. Suppose there is a vector $v\ne0$ perpendicular to the linear span of the eigenvalues $E$. If $f:\OP\to\T$ is a continuous eigenfunction with eigenvalue $\beta$ and $T\in\OP$, then $f(T-tv)=exp(2\pi i\langle \beta,tv\rangle)f(T)=f(T)$ for all $t\in\R$. This means that $g(T-tv)=g(T)$ for all $t\in\R$ (the Halmos - von Neumann theory asserts that the map $g$ is determined by the continuous eigenfunctions - see \cite{R} or \cite{BK}). But then $g$ is not finite-to-one. 

It is shown in \cite{LS2} that, in this context, the third statement is equivalent to the second. 

\end{proof}

\begin{proof} (Of Corollary \ref{hypcon implies Pcon}.)
The spectrum of the $\R^n$-action on $\OP$ is pure discrete if and only if $g$ is a.e. one-to-one (see, for example, \cite{BK}).
\end{proof}

\section{Pisot family substitutions}\label{Pisot family substitutions}

\begin{proof}(of Theorem \ref{Pisot}) Suppose that $\Phi$ is unimodular of $(m,d)$-Pisot family type. Let $X$ be the collared A-P complex for $\Phi$ so that $\il f$ is identified with $\Omega$ via $\hat{p}$. We prove that, for $T,T'\in \Omega$, $g(T)=g(T')$ if and only if $G(T)=G(T')$. 

Suppose first that $G(T)=G(T')$. 
Let $\tX=\tX_{K_{\Lambda}}$ and $\tf=\tf_{K_{\Lambda}}:\tX\to\tX$. We will argue that $g:\Omega=\il f\to\T^{md}$ lifts to $\tilde{g}:\tO_{K_\Lambda}=\il\tf\to\R^{md}$. We may suppose that the origin is in the interior of each prototile. For each $k\in\N$, let $X_k$ denote the A-P complex made of $k$-th order supertiles. That is, the $n$-faces of $X_k$ are the patches $\Phi^k(\rho_i)$, with face $\Phi^k(\rho_i)$ glued to face $\Phi^k(\rho_j)$ along $(n-1)$-face $\Lambda^ke$ in $X_k$ if and only if the prototile $\rho_i$ is glued to the prototile $\rho_j$ along the $(n-1)$-face $e$ in $X$. As each tiling in $\Omega$ is uniquely tiled by 
$k$-th order super tiles, there is a natural map $p^k:\Omega\to X_k$ ($p^k(T)=[v]$ in the face $\Phi^k(\rho_j)$ of $X_k$ if the $k$-th order super tile of $T$ containing the origin is $\Phi^k(\rho_j)-v$). The decomposition of $k$-th order 
supertiles into $(k-1)$-st order supertiles induces maps $f_k:X_k\to X_{k-1}$ so that $\Omega\simeq\il f_k$. Furthermore, there are substitution induced maps $f^k:X_k\to X_k$ with $f_k\circ f^k=f^{k-1}\circ f_k$ so that $\Phi$ on $\Omega$ is conjugated with the homeomorphism induced by $(f^k)$ on $\il f_k$ by $(p^k)$. In order to lift $g$ we approximate $g$ by maps $g_k\circ p^k$, where the 
$g_k:X_k\to\T^{md}$ are constructed as follows. For each $k$-th order super tile $\rho^k:=\Lambda^k(\rho)$, $\rho$ a prototile for $\Phi$, and $\epsilon_k>0$, let $\mathcal{N}(\rho^k,\epsilon_k)$ be the $\epsilon_k$-neighborhood of the boundary of $\rho^k$. Choose (arbitrarily ) $T_{\rho^k}\in\Omega$ so that the supertile $\rho^k$ occurs in the decomposition of $T_{\rho^k}$ into $k$-th order supertiles (we mean this supertile occurs exactly, with 0 translation). Define $g_k$ on $\rho^k\setminus\mathcal{N}(\rho^k,\epsilon_k)$ by $g_k(p^k(T_{\rho^k}-v)):=g(T_{\rho^k}-v)$ for $v\in \rho^k\setminus \mathcal{N}(\rho^k,\epsilon_k)$. Since we have collared $\Phi$, there is $\delta=\delta(k,\epsilon_k)$ so that if $\rho^k_i$ and $\rho^k_j$ are adjacent faces in $X_k$, glued along an $l$-face $e$, and $e\ni x=p^k(T_{\rho^k_i}-v)=p^k(T_{\rho^k_j}-w)$, with $v\in\rho^k_i$ and $w\in \rho^k_j$, then $d(T_{\rho^k_i}-v,T_{\rho^k_j}-w)<\delta$, and $\delta\to0$ as $k\to\infty$. Choosing $\epsilon_k$ sufficiently small, and using the local convexity of $\T^{md}$, we extend $g_k$ continuously to all of $X_k$ so that, for each $i$, if $v,w\in \rho^k_i$ are such that $|v-w|<\epsilon_k$, then $d(g_k(p^k(T_{\rho^k_i}-v)),g_k(p^k(T_{\rho^k_i}-w)))<\delta$. 
Now for $T\in\Omega$, let $T_{\rho^k_i}$ and $v\in\rho^k_i$ be such that $p^k(T)=p^k(T_{\rho^k_i}-v)$. Since we have collared, and $0\in int(\rho_i)$, $T$ and $T_{\rho^k_i}-v$ agree in an $r_k$-ball about the origin, with $r_k\to\infty$ as $k\to\infty$. Thus $d(g(T),g_k\circ p^k(T))=
d(g(T),g_k\circ p^k(T_{\rho^k_i}-v))\le d(g(T),g(T_{\rho^k_i}-v))+d(g(T_{\rho^k_i}-v,g_k\circ p^k(T_{\rho^k_i}-v))\to0$ as $k\to\infty$. That is, $g_k\circ p^k\to g$ uniformly as $k\to\infty$.

With the goal still of lifting $g$, we show now that $g_k:X_k\to\T^{md}$ lifts to $\tilde{g}_k:\tX_k\to\R^{md}$. Let $E_k$ be a dual 1-skeleton in $X_k$. That is, we choose a vertex in the interior of each $n$-face, a vertex in the (relative) interior of each $(n-1)$-face of $X_k$, and we make a straight line edge from the vertex interior to every $n$-face to each of the vertices on the $(n-1)$ -subfaces. Let $\tilde{\gamma}:[0,1]\to\tX_k$ be a loop in $\tX_k$. Then the loop $\pi_k\circ\tilde{\gamma}$ in $X_k$ is homotopic to a piecewise affine loop $\gamma$ lying on $E_k$. We can write $\gamma$ as a concatenation $\gamma=c_1*c_2*\cdots* c_l$ with each $c_i$ of the form
$c_i(t)=p^k(T_{\rho^k_j}-(v_i+tw_i))$, $t_i\le t\le t_{i+1}$, for some $j=j(i),v_i,w_i$. Let $\eta:=g_k\circ\gamma:[0,1]\to\T^{md}$ and let $\tilde{\eta}:[0,1]\to\R^{md}$ be a lift of $\eta$. Let $\iota:\R^n\to\R^{md}$ be the linear embedding so that $g(T-v)=g(T)-\iota(v)$. We have $$\tilde{\eta}(1)-\tilde{\eta}(0)=\sum_{i=0}^{l-1}(\tilde{\eta}(t_{i+1})-\tilde{\eta}(t_i))\approx\sum_{i=0}^{l-1}(\widetilde{\iota(v_{j(i)})}+t_{i+1}\iota(w_i))-(\widetilde{\iota(v_{j(i)})}+t_{i}\iota(w_i))$$ $$=\sum_{i=0}^{l-1}(t_{i+1}-t_i)\iota(w_i),$$ with the approximation improving as $k$ gets larger. The covering space $\tX_k$ is the quotient of the abelian cover $(\tX_{k})_{ab}$ by the action of those deck transformations lying in the kernel $K_k$ of the homomorphism 
$l_k:H_1(X_k;\Z)\to\R^n$. That $\tilde{\gamma}$ is a loop in $\tX_k$ means that the homology class
$[\gamma]$ lies in $K_k$. That is, $l_k([\gamma])=\sum_{i=0}^{l-1}(t_{i+1}-t_i)w_i=0$. Thus $\sum_{i=0}^{l-1}(t_{i+1}-t_i)\iota(w_i)=0$ also. As $\tilde{\eta}(1)-\tilde{\eta}(0)\in\Z^{md}$, it must be that, for sufficiently large $k$, $\tilde{\eta}(1)-\tilde{\eta}(0)=0$. The lifting criterion is satisfied: $g_k\circ\tilde{\pi}\circ\tilde{\gamma}$ is homotopic to a loop pushed down from $\R^{md}$ to$\T^{md}$. 

Let us record a property of $\tilde{g}_k$ for later use. From the displayed approximation of 
$\tilde{\eta}(1)-\tilde{\eta}(0)$ above, we see that, for sufficiently large k and any loop $\gamma$ in
$X_k$, and $\eta:=g_k\circ\gamma$, $\tilde{\eta}(1)-\tilde{\eta}(0)=\iota\circ l_k([\gamma])$. It follows that for $\tx\in \tX_k$, $[h]\in H_1(X_k;\Z)/K_k$, and $k$ sufficiently large:
\begin{equation}\label{g} 
\tilde{g}_k(\tx+[h])=\tilde{g}_k(\tx)+\iota\circ l_k(h).
\end{equation}

In the constructions above we have identified $\Omega$ with $\il f$ (by means of $\hat{p}:T\mapsto
(p(\Phi^{-i}(T)))$) and $\il f$ with $\il (f_k)$ by means of a rescaling: since $\rho^{k+1}_i=\Lambda \rho^k_i$ and $X=X_0$ there are natural maps $\Lambda^k:X\to X_k$ with 
$\Lambda^k\circ f=f_k\circ\Lambda^{k+1}$. Then $\il f$ is identified with $\il (f_k)$ by $(x_i)\mapsto
(\Lambda^i(x_i))$. The spaces $\il\tf$ and $\il (\tf_k)$ are then identified by first choosing a lift of $f$ to $\tf$, then choosing lifts $(\tilde{\Lambda^k}),(\tilde{f}_k)$ so that $\tilde{\Lambda^k}\circ\tf=\tf_k\circ\widetilde{\Lambda^{k+1}}$. Now choose lifts $\tilde{g}_k:\tX_k\to\R^{md}$ so that 
$|\tilde{g}_k\circ\tf_{k+1}(\tx_{k})-\tilde{g}_{k+1}(\tx_k)|\to0$ for some (and hence any) $(\tx_k)\in\il(\tf_k)$ and let $\tilde{g}:\il\tf\to\R^{md}$ be defined by $\tilde{g}((\tx_i)):=\lim_{k\to\infty}\tilde{g}_k\circ\tilde{\Lambda^k}(\tx_k)$. Convergence of this limit to a lift of $g$ is assured by the convergence of $g_k\circ p^k$ to $g$.

It is proved in \cite{BK} that there is a hyperbolic and unimodular $md\times md$ integer matrix $B$
so that $g\circ\Phi=F_B\circ g$, $F_B(x+\Z^{md}):=Bx+\Z^{md}$ being the corresponding hyperbolic 
automorphism of $\T^{md}$. It follows that $\tilde{g}\circ\hat{\tf}=B\tilde{g}$. We have assumed that $G(T)=G(T')$ with the objective of showing that then $g(T)=g(T')$. Let $(x_i),(x'_i)\in\il f$ be such that $\hat{p}(T)=(x_i),\hat{p}(T')=(x'_i)$. By Proposition \ref{main prop}, $(x_i)\sim_{gsK_{\Lambda}}(x'_i)$, so there are $(\tx_i),(\tx'_i)\in\il\tf$, living over $(x_i),(x'_i)$, so that $\bar{d}(\hat{\tf}^k((\tx_i)),\hat{\tf}^k((\tx'_i)))$, $k\in\Z$, is bounded. Then $|\tilde{g}((\hat{\tf}^k((\tx_i)))-\tilde{g}(\hat{\tf}^k((\tx'_i)))|=|B^k\tilde{g}((\tx_i))-B^k\tilde{g}((\tx'_i))|$, $k\in\Z$, is also bounded, and hyperbolicity of $B$ implies that $\tilde{g}((\tx_i))=\tilde{g}((\tx'_i))$. Thus $g(T)=g(T')$.

Now suppose that $g(T)=g(T')$. We wish to show that $T\sim_{gsK_{\Lambda}} T'$. From Theorem 3.1 of \cite{LS} we have (for $\Phi$ of $(m,d)$-Pisot family type) that there are $v_1,\ldots,v_m\in\R^n$ with $GR(\Phi)\subset \Z[\Lambda]v_1+\cdots+\Z[\Lambda]v_m$. Thus $D(GR)\le D(\Lambda)$, and hence $D(GR)=D(\Lambda)$ since the opposite inequality is always satisfied (Lemma \ref{eigenvalues}). Let  $K$ be the kernel of $l:H_1(X;\Z)\to GR(\Phi)$, let $\pi_K:\tX_K\to X$ be the 
corresponding cover of the collared A-P complex $X$, and let $\tf_K:\tX_K\to\tX_K$ be a lift of $f$. Let $\pi_1:\tilde{\Omega}\to\Omega$, with $\tilde{\Omega}:=\{(T,\tx):p(T)=\pi_K(\tx)\}$ the 
cover isomorphic with $\hat{\pi}_K:\il \hat{\tf}_K\to\il f$ as in Lemma \ref{tO}. 
Let $\bar{d}$ be the metric in $\tilde{\Omega}_K$ given by $\bar{d}((T,\tx),(T',\ty))=d(T,T')+\td(\tx,\ty)$. We will show that there are $\tT,\tT'\in\tilde{\Omega}_K$ lying over $T,T'$ so that $\bar{d}(\tilde{\Phi}^k(\tT),\tilde{\Phi}^k(\tT'))$, $k\in\Z$, is bounded. 

It is proved in \cite{BK} that $g(T)=g(T')$ if and only if $T$ and $T'$ are {\em strongly regionally proximal}, i.e., for each $k\in\N$ there are $S_k,S'_k\in\Omega$ and $v_k\in\R^n$ so that 
$B_k[T]=B_k[S_k]$, $B_k[T']=B_k[S'_k]$, and $B_k[S_k-v_k]=B_k[S'_k-v_k]$. Pick $\tT=(T,\tx)\in\tilde{\Omega}_K$. Let $\tS_k:=(S_k,\tx)\in\tO_K$. There is then $\tS'_k=(S'_k,\ty_k)\in\tO_K$ such that $\pi_2(\tS_k-v_k)=\pi_2(\tS'_k-v_k)$. Let $\tT'_k:=(T',\ty_k)\in\tO_K$. 

\begin{claim}\label{finitely many} There are only finitely many distinct $\ty_k$, $k\in\N$.
\end{claim}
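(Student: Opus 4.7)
The plan is to adapt the argument of Claim \ref{large r} from the proof of Proposition \ref{rp implies gs}. The only substantive difference is that we no longer have a shrinking translation $w_r \to 0$: instead of concluding that the lift of $T'$ is eventually constant, I will conclude that it takes only finitely many values, which is exactly what uniform discreteness of the model set $\Gamma$ delivers when we have a bounded (rather than Cauchy) sequence.

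First I would fix $R$ large enough that every $R$-patch contains a tile of every type, pick a tile $\tau' \in T'$ of the same type as the tile at the origin of $T$ with $\tau' \in B_R(0)$, and let $\tilde\tau \subset \mathcal{S}(T,\tx)$ be the $n$-cell at the origin. For each $k \ge R$ the agreement $B_k[S_k-v_k]=B_k[S'_k-v_k]$ together with $\tilde p^{S_k}_{\tx}(v_k)=\tilde p^{S'_k}_{\ty_k}(v_k)$ forces the sheets $\mathcal{S}(S_k,\tx)$ and $\mathcal{S}(S'_k,\ty_k)$ to share a large patch near $v_k$; choose in this overlap an $n$-cell $\tilde\tau_k$ of the same type as $\tilde\tau$. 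Also set $\tilde\tau'_k := \tilde p^{T'}_{\ty_k}(\tau') = \tilde p^{S'_k}_{\ty_k}(\tau')$, which lies in $\mathcal{S}(S'_k,\ty_k)$ (the two expressions agree because $\tau'\in B_R[T']=B_R[S'_k]$).

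Next I apply Claim \ref{sheets} in two incarnations. Let $[h^1_k]$ be the deck transformation with $[h^1_k](\tilde\tau)=\tilde\tau_k$; since $\tilde\tau,\tilde\tau_k \subset \mathcal{S}(S_k,\tx)$, Claim \ref{sheets} gives $|[h^1_k]^s|\le B$. Let $[h^2_k]$ satisfy $[h^2_k](\tilde\tau'_k)=\tilde\tau_k$; since $\tilde\tau'_k,\tilde\tau_k \subset \mathcal{S}(S'_k,\ty_k)$, we have $|[h^2_k]^s|\le B$, and therefore $|[h^1_k-h^2_k]^s|\le 2B$ and $l([h^1_k-h^2_k])\in \Gamma = L(\{a\in\R^D:|a^s|\le 2B\})$. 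A direct bookkeeping via paths in the sheets (analogous to the displacement computation just after Claim \ref{large r}) gives $l([h^1_k])=v_k+\varepsilon^1_k$ and $l([h^2_k])=v_k-\tau'+\varepsilon^2_k$ with $|\varepsilon^1_k|,|\varepsilon^2_k|$ bounded by a constant depending only on the maximum tile diameter. Consequently $l([h^1_k-h^2_k])=\tau'+(\varepsilon^1_k-\varepsilon^2_k)$ is a uniformly bounded element of the uniformly discrete set $\Gamma$, so $\{l([h^1_k-h^2_k]):k \ge R\}$ is finite. Injectivity of $l$ on $H_1(X;\Z)/K_\Lambda$ then yields finiteness of $\{[h^1_k-h^2_k]:k \ge R\}$.

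To finish I would translate this into finiteness of $\{\ty_k\}$. Fix any $\ty_0 \in \pi_K^{-1}(p(T'))$, write $\ty_k=\ty_0+[g_k]$, and let $\tilde\tau'_0:=\tilde p^{T'}_{\ty_0}(\tau')=[h_0](\tilde\tau)$ for a fixed $[h_0]\in H_1(X;\Z)/K_\Lambda$. Then $\tilde\tau'_k=[g_k](\tilde\tau'_0)=[g_k+h_0](\tilde\tau)$ on one hand and $\tilde\tau'_k=[h^1_k-h^2_k](\tilde\tau)$ on the other, so $[g_k]=[h^1_k-h^2_k]-[h_0]$ takes only finitely many values; the $k<R$ contribute at most $R$ further possibilities. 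The main obstacle is a careful justification of the displacement formulas $l([h^i_k])=v_k+O(1)$: the deck-transformation-to-return-vector identification of Lemma \ref{return vector} has to be made concrete by choosing explicit paths in the respective sheets from $\tilde\tau$ (resp.\ $\tilde\tau'_k$) to $\tilde\tau_k$, and one must check that the resulting displacement differs from $v_k$ (resp.\ $v_k-\tau'$) only by a bounded ``in-tile'' error. Once this is done, uniform discreteness of $\Gamma$ finishes the proof.
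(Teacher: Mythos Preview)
Your approach is correct, but it is genuinely different from the paper's. The paper does not revisit the sheet/model-set machinery of Proposition~\ref{rp implies gs}; instead it first proves Claim~\ref{flat} (using the lift $\tilde g:\tO_K\to\R^{md}$ of the maximal equicontinuous factor map constructed earlier in the proof of Theorem~\ref{Pisot}) and then deduces Claim~\ref{finitely many} in two lines: since $\pi_2(\tS_k-v_k)=\pi_2(\tS'_k-v_k)$ one has $\bar d(\tS_k-v_k,\tS'_k-v_k)<2\,\mathrm{diam}(\Omega)$, so Claim~\ref{flat} gives $\bar d(\tS_k,\tS'_k)<R$, hence $\td(\tx,\ty_k)$ is bounded, and boundedly many deck translates of a point lie in any bounded set.

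Your argument, by contrast, transplants Claim~\ref{sheets} and the model-set $\Gamma$ from Proposition~\ref{rp implies gs} (legitimately, since $D(GR)=D(\Lambda)$ has already been established here via \cite{LS}). In fact your computation gives more than you claim: with $w_r\equiv 0$ the identity from Claim~\ref{large r} becomes $l([h^1_k-h^2_k])=\text{const}$, so $\ty_k$ is eventually \emph{constant}, not merely finite-valued. The trade-off is that the paper's route is essentially free in context, because Claim~\ref{flat} and the lift $\tilde g$ are needed anyway for the subsequent bounds on $\bar d(\tilde\Phi^j(\tT),\tilde\Phi^j(\tT'))$; your route is more elementary for this single claim but does not supply the tool the rest of the proof relies on.
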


The proof of Claim \ref{finitely many} will follow from the 

\begin{claim}\label{flat} There is $R$ so that if $\tT,\tS\in\tO_K$ and $\bar{d}(\tT,\tS)< 2diam(\Omega)$, then 
$\bar{d}(\tT-v,\tS-v)<R$ for every $v\in\R^n$.
\end{claim}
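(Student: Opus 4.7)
The plan is to argue by contradiction using compactness of an appropriate quotient together with the Meyer-set rigidity from Claim \ref{sheets}. Observe first that $\bar{d}$ is invariant under the diagonal action of $H_1(X;\Z)/K_\Lambda$ on $\tO_K\times\tO_K$, and that by property (2) in the definition of $\td$, for each $\tT$ only finitely many lifts $\tS$ over a given $S\in\Omega$ satisfy $\bar{d}(\tT,\tS)<2diam(\Omega)$. Hence the sublevel set $F:=\bar{d}^{-1}\bigl([0,2diam(\Omega))\bigr)$ descends to a finite-to-one cover of a closed subset of the compact $\Omega\times\Omega$, so the quotient $F/(H_1(X;\Z)/K_\Lambda)$ is compact.

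Suppose the claim fails. Then there exist sequences $(\tT_i,\tS_i)\in F$ and $v_i\in\R^n$ with $\bar{d}(\tT_i-v_i,\tS_i-v_i)\to\infty$. Since $d(T_i-v_i,S_i-v_i)\le diam(\Omega)$, the divergence must occur in the $\td$-coordinate. By compactness of $\Omega$, pass to subsequences with $T_i-v_i\to T^*$ and $S_i-v_i\to S^*$. Using deck equivariance of $\bar{d}$, choose diagonal deck translations $[h_i]\in H_1(X;\Z)/K_\Lambda$ so that $[h_i]\cdot\tilde{p}^{T_i}_{\tx_i}(v_i)$ lies in a fixed compact fundamental domain of $\tX_K$ and, after a further subsequence, converges to some $\tilde{z}^T$ with $\pi_K(\tilde{z}^T)=p(T^*)$. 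The companion point $[h_i]\cdot\tilde{p}^{S_i}_{\ty_i}(v_i)$ projects to $p(S_i-v_i)\to p(S^*)$ in the compact base $X$ while remaining at $\td$-distance from $\tilde{z}^T$ tending to infinity. Property (2) thus forces $[h_i]\cdot\tilde{p}^{S_i}_{\ty_i}(v_i)=[\bar{h}_i]\tilde{w}_i$, where $\tilde{w}_i$ converges to some $\tilde{w}^*\in\pi_K^{-1}(p(S^*))$ and $|[\bar{h}_i]|\to\infty$.

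The Pisot family hypothesis produces the contradiction. Since $(\tT_i,\tS_i)\in F$ constrains the relative displacement of the sheets $\mathcal{S}(T_i,\tx_i)$ and $\mathcal{S}(S_i,\ty_i)$ in $\tX_K$ to be bounded, and $\R^n$-translation simply moves each lifted path within its own unchanging sheet, for large $i$ the translated sheets $\mathcal{S}(T_i-v_i,[h_i]\tilde{p}^{T_i}_{\tx_i}(v_i))$ and $\mathcal{S}(S_i-v_i,[h_i]\tilde{p}^{S_i}_{\ty_i}(v_i))$ share an $n$-cell in $\tX_K$. Claim \ref{sheets} then gives a uniform bound $|[\bar{h}_i]^s|\le B$, so $l([\bar{h}_i])\in\Gamma=L\bigl(\{a\in\R^D:|a^s|\le B\}\bigr)$, which is uniformly discrete by the Meyer property (as in the proof of Claim \ref{large r}); combined with the convergence of $\pi_K([\bar{h}_i]\tilde{w}_i)$ in $X$, this forces $l([\bar{h}_i])$ to be eventually constant, contradicting $|[\bar{h}_i]|\to\infty$. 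The main obstacle is justifying the sheet-sharing step: showing that the $\bar{d}$-bound at time $0$ forces the translated sheets to meet in an $n$-cell is where the Pisot family hypothesis and the equality $D(\Lambda)=D(GR)$ do essential work, supplying uniform discreteness of $\Gamma$ and surjectivity of $L\colon E^u\to\R^n$.
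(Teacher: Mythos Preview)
Your argument has a genuine gap at the step you yourself flag. Claim \ref{sheets} concerns a \emph{single} sheet $\mathcal{S}$ and its deck translate $[h]\mathcal{S}$: if $[h]\mathcal{S}\cap\mathcal{S}$ contains an $n$-cell, then $|[h]^s|\le B$. In your setup you have two \emph{different} sheets $\mathcal{S}(T_i,\tx_i)$ and $\mathcal{S}(S_i,\ty_i)$ coming from distinct tilings; there is no reason these should share an $n$-cell (the bound on $\bar d(\tT_i,\tS_i)$ only says the basepoints $\tx_i,\ty_i$ are close in $\tX_K$, not that the ambient sheets coincide anywhere), and even if they did, Claim \ref{sheets} would not bound $|[\bar h_i]^s|$, since $[\bar h_i]$ is not a deck translation taking one of these sheets to itself. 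There is a second gap at the end: even granting $l([\bar h_i])\in\Gamma$, uniform discreteness of $\Gamma$ does not force $l([\bar h_i])$ to be eventually constant; the sequence could march off to infinity inside $\Gamma$. In the proof of Claim \ref{large r} this is ruled out by an additional relation ($l([h^1_r])-l([h^2_r])=w_{r'}-w_r\to0$), but you supply no analogous constraint here.

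The paper's proof uses an entirely different mechanism and does not touch Claim \ref{sheets}. It constructs a lift $\tilde g:\tO_K\to\R^{md}$ of the maximal equicontinuous factor map $g$ and establishes the two exact equivariances $\tilde g(\tT+[h])=\tilde g(\tT)+\iota\circ l(h)$ and $\tilde g(\tT-v)=\tilde g(\tT)-\iota(v)$. The second identity makes $|\tilde g(\tT)-\tilde g(\tS)|$ \emph{invariant} under simultaneous $\R^n$-translation, so a bound at $v=0$ persists for all $v$; the first identity (together with injectivity of $\iota\circ l$) yields properness, i.e.\ large $\bar d$ forces large $|\tilde g(\tT)-\tilde g(\tS)|$. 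Chaining these gives the claim. The essential content your approach is missing is precisely an $\R^n$-equivariant numerical invariant on $\tO_K$; sheet geometry alone does not supply one.
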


To prove Claim \ref{flat}, let $\tilde{g}:\tO_K\to\R^{md}$ be a lift of $g$ as constructed above.  Under the identification of $\tO_K$ with $\il (\tf_k)$ used above in the construction of $\tilde{g}$, the point $\tT=(T,\tx)\in\tO_K$ corresponds to $(\tx_0,\tx_1,\ldots)\in\il (\tf_k)$ with: $\tx_0=\tx$; $\tx_1$ such that $\tf_1(\tx_1)=\tx_0$ and $\pi_1(\tx_1)=p^1(\Phi^{-1}(T)); \ldots$ (here $\pi_1:\tX_1\to X_1$ is the covering projection). Then, for $[h]$ in the group $H_1(X;\Z)/K$ of deck translations of $\tO_K$, $\tT+[h]$ (by this we  mean the deck translation $[h]$ applied to $\tT$) corresponds to $(\tx+[h],\ldots,\tx_k+[((f_1)_*\circ\cdots\circ(f_k)_*)^{-1}(h)],\ldots)$ (we have used unimodularity to invert $(f_i)_*$ on $H_1(X;\Z)/K$) . Thus, using equation \ref{g} for $k$ large, we have $$\tilde{g}(T+[h])\approx \tilde{g}_k(\tx_k+[((f_1)_*\circ\cdots\circ(f_k)_*)^{-1}(h)])$$ $$=\tilde{g}_k(\tx_k)+\iota\circ l_k(((f_1)_*\circ\cdots\circ(f_k)_*)^{-1}(h)).$$ Using the easily checked fact that $l_k=l_{k-1}\circ (f_k)_*$, we have $l_k(((f_1)_*\circ\cdots\circ(f_k)_*)^{-1}(h))=l(h)$. Thus $\tilde{g}(\tT+[h])\approx \tilde{g}(\tT)+\iota\circ l(h)$, with an improving approximation as $k\to\infty$. That is, $$\tilde{g}(\tT+[h])= \tilde{g}(\tT)+\iota\circ l(h).$$ Now $[h]\to l(h)$ is an isomorphism of $H_1(X;\Z)/K$ with $GR(\Phi)\subset\R^n$ and $\iota:\R^n\to\R^{md}$ is a linear embedding, so $\iota\circ l: H_1(X;\Z)/K\to\Z^{md}$ is injective. It follows from this and equivariance of $\bar{d}$ that, given $C$, there is an $R$ so that if $\bar{d}(\tT,\tS)>R$ then $|\tilde{g}(\tT)-\tilde{g}(\tS)|>C$.
 
From $g(T-v)=g(T)-\iota(v)$ we deduce $\tilde{g}(\tT-v)=\tilde{g}(\tT)-\iota(v)$. Since $\tilde{g}$ is a lift and $\bar{d}$ is equivariant, there is $C$ so that $\bar{d}(\tT,\tS)<2diam(\Omega)$ implies $|\tilde{g}(\tT)-\tilde{g}(\tS)|<C$. We then have $\bar{d}(\tT,\tS)<2diam(\Omega)\implies |\tilde{g}(\tT)-\tilde{g}(\tS)|<C \implies |(\tilde{g}(\tT)-\iota(v))-(\tilde{g}(\tS)-\iota(v))|<C \implies |\tilde{g}(\tT-v)-\tilde{g}(\tS-v)|<C \implies \bar{d}(\tT-v,\tS-v)<R$, proving Claim \ref{flat}.

Now for the proof of Claim \ref{finitely many}. We have $\bar{d}(\tS_k,\tS'_k)=d(S_k,S'_k)+\td(\tx,\ty)$
and $\bar{d}(\tS_k-v_k,\tS'_k-v_k)=d(S_k-v_k,S'_k-v_k)+0<2diam(\Omega)$. By Claim \ref{flat}, $\td(\tx,\ty)<d(S_k,S'_k)+R$ is bounded. The $\ty_k\in\tX$ all satisfy $\pi_K(\ty_k)=p(T')$ and hence there are $[h_k]\in H_1(X;\Z)$ so that $\ty_k=\ty_0+[h_k]$. As $\td(\ty_k,\ty_0)$ is bounded, there can be only finitely many distinct $[h_k]$ and Claim \ref{finitely many} is established.

Now pick $k_i\to\infty$ with $\ty_{k_i}=:\tx'$ constant and let $\tT':=\tT'_{k_i}$. To ease notation, we assume $k_i=i$, that is, all $\ty_k$ are the same. To conclude the proof of Theorem \ref{Pisot}, we show that $\bar{d}(\tilde{\Phi}^j(\tT),\tilde{\Phi}^j(\tT'))$, $j\in\Z$, is bounded.

First consider $j\ge0$. We have $\pi_2(\tilde{\Phi}^j(\tT))=\pi_2((\Phi^j(T),\tf_K^j(\tx)))=\tf_K^j(\tx)=\pi_2(\tilde{\Phi}^j(\tS_1))$ and similarly, $\pi_2(\tilde{\Phi}^j(\tT'))=\pi_2(\tilde{\Phi}^j(\tS'_1))$.
Also, $\pi_2(\tS_1-v_1)=\pi_2(\tS'_1-v_1)\implies \pi_2(\tilde{\Phi}^j(\tS_1-v_1))=\pi_2(\tilde{\Phi}^j(\tS'_1-v_1))$. That is, $\pi_2(\tilde{\Phi}^j(\tS_1)-\Lambda^jv_1)=\pi_2(\tilde{\Phi}^j(\tS'_1)-\Lambda^jv_1)$, and by Claim \ref{flat} we have $\bar{d}(\tilde{\Phi}^j(\tS_1),\tilde{\Phi}^j(\tS'_1))<R$. Thus $\td(\tf_K^j(\tx),\tf_K^j(\tx'))<R$, so $\bar{d}(\tilde{\Phi}^j(\tT),\tilde{\Phi}^j(\tT'))<R+diam(\Omega)$ for all $j\ge 0$.

For $j<0$ we will need the following:

\begin{claim}\label{balls} Given $R_1$ there is $R_2$ so that if $\tT,\tS\in\tO_K$ satisfy
$\pi_2(\tilde{\Phi}(\tT)-v)=\pi_2(\tilde{\Phi}(\tS)-v)$ for all $v\in B_{R_2}(0)$, then 
$\pi_2(\tT-v)=\pi_2(\tS-v)$ for all $v\in B_{R_1}(0)$.
\end{claim}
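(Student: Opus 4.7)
The plan is to reduce the claim to the standard recognizability of $\Phi$ on $\Omega$, together with two cover-level rigidity observations.

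First I would unpack the hypothesis. Using the equivariance $\tilde{\Phi}(\tT-v)=\tilde{\Phi}(\tT)-\Lambda v$ of the lifted translation action, one computes $\pi_2(\tilde{\Phi}(\tT)-v)=\tilde{p}^{\Phi(T)}_{\tf_K(\tx)}(v)$, so the hypothesis is precisely the equality $\tilde{p}^{\Phi(T)}_{\tf_K(\tx)}=\tilde{p}^{\Phi(S)}_{\tf_K(\ty)}$ on $B_{R_2}(0)$. Projecting through $\pi_K$ yields $p(\Phi(T)-v)=p(\Phi(S)-v)$ for all $v\in B_{R_2}(0)$. Since $\Phi$ is a homeomorphism of the compact space $\Omega$ (recognizability, \cite{sol}) and $p$ is continuous, one can choose $R_2=R_2(R_1)$ large enough that this downstairs agreement on $B_{R_2}(0)$ forces $p(T-v)=p(S-v)$ throughout $B_{R_1}(0)$.

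Next I would promote this base-level equality to the cover. Setting $v=0$ gives $p(T)=p(S)$, so $\tx$ and $\ty$ lie in the same fiber of $\pi_K$ and differ by a deck transformation, $\ty=\tx+[h]$ for some $[h]\in H_1(X;\Z)/K$. Plugging $v=0$ into the original hypothesis yields $\tf_K(\tx)=\tf_K(\ty)=\tf_K(\tx)+\bar{f}_*([h])$, and since $\bar{f}_*$ is an automorphism of the deck group (Lemma \ref{covering map}), we conclude $[h]=0$ and $\tx=\ty$. Now the two continuous maps $v\mapsto\tilde{p}^T_{\tx}(v)$ and $v\mapsto\tilde{p}^S_{\ty}(v)$ on the path-connected ball $B_{R_1}(0)$ both lift the common map $v\mapsto p(T-v)=p(S-v)$ through the covering $\pi_K$, and they agree at the single point $v=0$; uniqueness of path lifts then forces them to coincide on all of $B_{R_1}(0)$, which is precisely $\pi_2(\tT-v)=\pi_2(\tS-v)$.

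The main obstacle I anticipate is making step one quantitatively explicit: one must phrase recognizability not simply as the invertibility of $\Phi$ on tilings, but as a statement that $p$-agreement on a large ball downstairs implies $p$-agreement on a prescribed smaller ball, which uses FLC and the collaring of $X$ in an essential way. Once this quantitative form of recognizability is in hand, the cover-level steps are essentially formal: the injectivity of $\bar{f}_*$ pins down $\tx=\ty$, and the path-lifting property of $\pi_K$ closes the argument.
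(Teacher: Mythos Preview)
Your proposal is correct and follows essentially the same route as the paper's proof: unpack the hypothesis as $\tilde{p}^{\Phi(T)}_{\tf_K(\tx)}=\tilde{p}^{\Phi(S)}_{\tf_K(\ty)}$ on $B_{R_2}(0)$, project to $X$ and invoke quantitative recognizability to get $p(T-v)=p(S-v)$ on $B_{R_1}(0)$, use $\tf_K(\tx)=\tf_K(\ty)$ together with injectivity of $\bar f_*$ to force $\tx=\ty$, and then conclude by uniqueness of lifts. The only cosmetic difference is that the paper phrases recognizability via $B_{R_2}[\Phi(T)]=B_{R_2}[\Phi(S)]\Rightarrow B_{R_1}[T]=B_{R_1}[S]$ rather than $p$-agreement, and asserts the final equality of lifts directly rather than naming it as path-lifting; your formulation is equivalent (with collaring, $p$-agreement on a ball and patch agreement on a ball differ by at most a tile diameter in the radius).
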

 
 Proof of Claim \ref{balls}: Let $R_1$ be given. It is a consequence of invertibility of $\Phi$ that there is an $R_2$ so that if $T,S\in\Omega$ satisfy $B_{R_2}[\Phi(T)]=B_{R_2}[\Phi(S)]$, then 
 $B_{R_1}[T]=B_{R_1}[S]$. (This is ``recognizability".)
 Now let $\tT=(T,\tx)$ and $\tS=(S,\ty)$. Then $\tT-v=(T-v,\tilde{p}^T_{\tx}(v))$ and 
$\tS-v=(S-v,\tilde{p}^S_{\ty}(v))$. Now suppose that $\pi_2(\tilde{\Phi}(\tT)-v)=\pi_2(\tilde{\Phi}(\tS)-v)$ for all $|v|<R_2$. This means that $\tf_K(\tilde{p}^T_{\tx}(\Lambda^{-1}v))=\tf_K(\tilde{p}^S_{\ty}(\Lambda^{-1}v))$, which is to say $\tilde{p}^{\Phi(T)}_{\tf_K(\tx)}(v)=\tilde{p}^{\Phi(S)}_{\tf_K(\ty)}(v)$, for all such $v$. From this we have $\tf_K(\tx)=\tf_K(\ty)$ and $p(\Phi(T)-v)=p(\Phi(S)-v)$ for $|v|<R_2$. So $B_{R_2}[\Phi(T)]=B_{R_2}[\Phi(S)]$ so that $B_{R_1}[T]=B_{R_1}[S]$. In particular,
$p(T)=p(S)$ and $\ty=\tx+[h]$ for some $[h]\in H_1(X;\Z)/K$. Then $\tf_K(\tx)=\tf_K(\ty)=\tf_K(\tx+[h])=\tf_K(\tx)+[f_*(h)] \implies [f_*(h)]=0$. Since $f_*$ is invertible on $H_1(X;\Z)/K$ (unimodularity),
$[h]=0$ and $\tx=\ty$. Together with $B_{R_1}[T]=B_{R_1}[S]$, this last implies that $\tilde{p}^T_{\tx}(v)=\tilde{p}^S_{\ty}(v)$, that is, $\pi_2(\tT-v)=\pi_2(\tS-v)$, for all $v\in B_{R_1}(0)$.

To finish the proof of the theorem, let $R_2=R_2(R_1)$ be as in Claim \ref{balls} and for $m\in\N$ let
$R_2^m:=R_2(R_2(\cdots(R_2(1))\cdots))$, iterated $m$ times. Pick $j<0$ and take $k>R_2^{|j|}$.
Then $\pi_2(\tT-v)=\pi_2(\tS_k)-v$ for $|v|<k$ implies that $\pi_2(\tilde{\Phi}^{-1}(\tT)-v)=\pi_2(\tilde{\Phi}^{-1}(\tS_k)-v)$ for $|v|<R_2^{|j|-1}$, which implies that ..., which gives $ \pi_2(\tilde{\Phi}^j(\tT)-v)=\pi_2(\tilde{\Phi}^j(\tS_k)-v)$ for $|v|<1$. Similarly, for this $k$, we have $\pi_2(\Phi^j(\tS_k-v_k)-v)=\pi_2(\Phi^j(\tS'_k-v_k)-v)$ and $\pi_2(\Phi^j(\tS'_k-v)=\pi_2(\Phi^j(\tT')-v)$ for $|v|<1$. Using Claim \ref{flat} (as in the $j>0$ case above) we conclude that $\bar{d}(\tilde{\Phi}^j(\tT),\tilde{\Phi}^j(\tT'))<diam(\Omega)+R$. Thus $G(T) = G(T'). $

\end{proof}

\section{Connections with the traditional Pisot Substitution Conjecture}\label{finalsection}

The traditional Pisot Substitution Conjecture is as follows:

\begin{conjecture}\label{Pisotconj} If $\Phi$ is a one-dimensional substitution with unimodular
and irreducible incidence matrix $M$ (that is, the characteristic polynomial of $M$ is irreducible over $\Q$) and with inflation $\lambda$ a Pisot number, then the $\R$-action on $\OP$ has pure discrete spectrum.
\end{conjecture}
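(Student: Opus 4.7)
The plan is to situate Conjecture \ref{Pisotconj} inside this paper's framework and reduce it to the essential-cohomology variant of Conjecture \ref{hypconjecture}. A one-dimensional substitution $\Phi$ with unimodular incidence matrix $M$ and Pisot inflation $\lambda$ of algebraic degree $d$ is automatically unimodular in this paper's sense (unimodularity of $M$ plus $\lambda$ being an algebraic integer forces $\lambda$ to be a unit), hyperbolic (no Galois conjugate of a Pisot number lies on the unit circle), and of $(1,d)$-Pisot family type, since $spec(\Lambda)=\{\lambda\}$ is a single family of multiplicity one and degree $d$. Theorem \ref{Pisot} then yields $D(\Lambda)=D(GR)=d$ and identifies the geometric realization $G:\OP\to\T^d$ of Theorem \ref{main theorem} with the maximal equicontinuous factor map $g$. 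By Halmos--von Neumann, the conjecture is equivalent to the assertion that $G$ is $\mu$-a.e.\ one-to-one, i.e.\ that the constant $r$ of Theorem \ref{main theorem} equals $1$.

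Next, I would reformulate this as an instance of the essential-cohomology version of Conjecture \ref{hypconjecture}. Because in the irreducible unimodular Pisot setting the full cohomology $H^1(\OP;\Z)$ may contain non-essential classes (coming from vertex combinatorics of the collared Anderson--Putnam complex) on which $\Phi^*$ acts non-hyperbolically, one cannot apply Conjecture \ref{hypconjecture} directly. Using the methods of \cite{bd1} I would show that irreducibility of $M$ forces $H^1_{ess}(\OP;\Z)$ to have rank exactly $d$, with $\Phi^*|_{H^1_{ess}}$ having characteristic polynomial equal to the minimal polynomial of $\lambda$; in particular $H^1_{hyp}=H^1_{ess}$. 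Theorem \ref{main theorem'} then returns $G'=G=g:\OP\to\T^d$, and the conjecture becomes $r'=1$, i.e.\ $\mu$-a.e.\ triviality of the global shadowing relation $\sim_{gsK_\Lambda}$. One would then argue by contradiction: suppose $T\sim_{gsK_\Lambda}T'$ for $T'\neq T$ on a positive-measure set, and choose lifts $\tT,\tT'\in\tO_{K_\Lambda}$ with $\bar{d}$-bounded $\tilde{\Phi}_{K_\Lambda}$-orbit separation. Using the splitting of the deck group $H_1(X;\Z)/K_\Lambda\simeq\Z^d$ into the unstable direction (the embedding of $\Z[\lambda]$ along $\lambda$) and the $(d-1)$-dimensional stable direction (the remaining Galois conjugates, all strict contractions by the Pisot property), the goal is to strengthen Claim \ref{sheets} from the proof of Proposition \ref{rp implies gs} and show that the stable overlap displacement between neighboring sheets vanishes on a conull set, forcing $\tT'=\tT+[h]$ with $[h]$ of zero unstable displacement, hence $[h]=0$.

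The main obstacle is precisely this strengthening. Boundedness of stable overlaps comes for free from the Pisot property, but extracting almost-everywhere vanishing is the homological incarnation of Dekking's strong coincidence condition and of the tiling-with-multiplicity-one property of the Rauzy fractal --- the open core of the Pisot Substitution Conjecture itself. The abelian-cover framework of Section 3 translates rather than resolves this question; progress would likely require a quantitative equidistribution result for $\tilde{\Phi}_{K_\Lambda}$-orbits through the sheet structure of $\tX_{K_\Lambda}$, or a transfer of existing balanced-pair or overlap-algorithm coincidence arguments into the cohomological language of the paper, which I would attempt by pulling back the balanced-pair graph of $\Phi$ to the deck group and studying its limiting behavior in the contracting $L$-image of the stable subspace $E^s\subset\R^d$.
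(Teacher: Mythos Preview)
The statement you are attempting is not a theorem of the paper; it is the traditional Pisot Substitution Conjecture, stated as Conjecture \ref{Pisotconj} and left open. The paper gives no proof of it, so there is nothing in the paper to compare your argument against at the level of an actual proof.

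What the paper does prove in this direction is Proposition \ref{2impliesPisotconj}: under the hypotheses of Conjecture \ref{Pisotconj} one has $rank(H^1_{ess}(\OP;\Z))=d=D(\Lambda)$, so the traditional conjecture is subsumed by Conjecture \ref{homologicalPisot2}. Your first two paragraphs rediscover exactly this reduction (unimodular, hyperbolic, $(1,d)$-Pisot family, $G=g$ via Theorem \ref{Pisot}, and then $H^1_{ess}$ of rank $d$), so on that part you are aligned with the paper. One small caution: you invoke Conjecture \ref{hypconjecture2} and Theorem \ref{main theorem'} with $H^1_{hyp}=H^1_{ess}$, but in the paper $H^1_{hyp}$ is defined inside the full $H^1(\OP;\Z)$, not inside $H^1_{ess}$; the paper does not assert that $H^1_{hyp}$ coincides with $H^1_{ess}$ in the irreducible Pisot case, only that $H^1_{ess}$ has rank $d$. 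Your identification $G'=G$ therefore needs more care.

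Your third paragraph is an honest and accurate assessment: the step from bounded stable overlap to almost-everywhere vanishing is precisely the unresolved heart of the Pisot conjecture (strong coincidence, multiplicity-one Rauzy tiling), and the abelian-cover machinery of this paper reformulates it without resolving it. So your proposal is not a proof, and you correctly recognize why; the paper does not claim one either.
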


We will see below that this is a special case of a slight strengthening of Conjecture \ref{homologicalPisot1}.

Given a substitution $\Psi$ with A-P complex $X$ (not necessarily collared), substitution induced map $f:X\to X$, and natural semiconjugacy $p:\Omega_{\Psi}\to\Lim f$, as defined previously, let
$$H^*_f(\Omega_{\Psi}):=p^*(H^*(\Lim f;\Z)).$$
Let us say that a substitution $\Psi$ is {\bf compatible} with a substitution $\Phi$ if there is a homeomorphism $h_{\Psi}:\OP\to\Omega_{\Psi}$ that conjugates some positive powers of the substitution homeomorphisms $\Phi$ and $\Psi$. We define the {\bf essential cohomology} of
$\OP$ to be $$H^*_{ess}(\OP;\Z):=\cap_{\Psi}h_{\Psi}^*(H^*_f(\Omega_{\Psi})),$$
the intersection being over all $\Psi$ compatible with $\Phi$.

\begin{conjecture}\label{hypconjecture2} If $\Phi$ is unimodular and hyperbolic, and $H^1_{hyp}(\Omega_{\Phi};\Z)=H^1_{ess}(\OP;\Z)$, then $G':\OP\to\T^{D'}$ is a.e. 1-to-1.
\end{conjecture}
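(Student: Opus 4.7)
The plan is to proceed by contrapositive. Suppose $G'$ is a.e.\ $r'$-to-one with $r'>1$; the goal is to produce a compatible substitution $\Psi$ on $\OP$ whose A-P cohomology $h_\Psi^*(H^*_f(\Omega_\Psi))$ fails to contain all of $H^1_{hyp}$, which would force $H^1_{ess}\subsetneq H^1_{hyp}$ and contradict the hypothesis.

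I would first study the geometry of the global shadowing classes for $K=K_{hyp}$. By the argument in the proof of Theorem \ref{main theorem}, $G'$ is injective on unstable manifolds, so each generic $r'$-element fiber consists of tilings lying on a common stable manifold whose lifts to $\tO_{K_{hyp}}$ differ by deck transformations in the kernel of the coordinatization by $\Z^{D'}$. In the spirit of Proposition \ref{K-equiv} and Corollary \ref{asymptotic pairs} (stated for $K_{\Lambda}$ but adaptable to $K_{hyp}$), these shadowing classes should admit a combinatorial description as finite collections of tilings obtained from one another by local substitutions along asymptotic half-spaces. The first concrete step is to produce, for each class, a canonical set of ``branch'' tilings that generate it in a manner compatible with the $\Phi$-dynamics.

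Next I would attempt to realize this shadowing structure through a compatible substitution. The idea is to coarsen the collared A-P complex of a large power $\Phi^k$ by gluing together faces whose associated tilings lie in the same global shadowing class; if the resulting cell complex is the A-P complex of a well-defined substitution $\Psi$ on $\OP$, then $h_\Psi$ conjugates $\Phi^k$ with a power of $\Psi$ and $\Psi$ is compatible with $\Phi$. Under such a coarsening the homology classes in $H_1(X;\Z)/K_{hyp}$ responsible for the $r'$-fold redundancy are killed, so the dual cohomology $H^*_f(\Omega_\Psi)$ omits the corresponding piece of $H^1_{hyp}$, yielding the desired contradiction.

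The main obstacle is precisely the realizability of the coarsening as a genuine substitution. The global shadowing relation is defined asymptotically in both time directions, while substitution A-P complexes impose only a local, bounded-radius identification; bridging this gap requires showing that in the $H^1_{hyp}=H^1_{ess}$ regime the shadowing equivalence can always be detected at a fixed finite scale after passing to sufficiently high powers of $\Phi$. This is the higher-dimensional homological analog of the principal difficulty in the classical Pisot substitution conjecture, and seems to require either a Markov-partition-type construction for the Smale space factor $G':\OP\to\T^{D'}$ or a more delicate analysis of how the Meyer-set structure of return vectors constrains the family of compatible substitutions.
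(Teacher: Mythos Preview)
The statement you are attempting to prove is labeled a \emph{conjecture} in the paper, and the paper provides no proof of it. It is presented in Section~\ref{finalsection} alongside Conjecture~\ref{homologicalPisot2} as a strengthening of Conjectures~\ref{hypconjecture} and~\ref{homologicalPisot1}, which in turn (via Corollary~\ref{hypcon implies Pcon} and Proposition~\ref{2impliesPisotconj}) generalize the traditional Pisot Substitution Conjecture. That conjecture is open, so there is nothing in the paper to compare your proposal against.

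Your proposal is not a proof but a heuristic strategy, and you yourself identify the gap accurately in the final paragraph: the step of realizing the global shadowing relation by a compatible substitution $\Psi$ is exactly the unresolved core of the problem. A couple of specific points in the earlier steps are also not right as stated. First, the assertion that a generic $r'$-element fiber of $G'$ consists of tilings on a common stable manifold does not follow from injectivity on unstable manifolds; Proposition~\ref{main prop}(iii) says that $G'(T)=G'(S)$ is equivalent to $T\sim_{gsK_{hyp}}S$, i.e., bounded separation of lifted orbits for all $k\in\Z$, which is strictly weaker than stable equivalence. Second, the proposed coarsening would kill classes in $H_1(X;\Z)/K_{hyp}$, but there is no reason to expect the shadowing redundancy to be encoded by a subgroup of $H_1(X;\Z)/K_{hyp}$ at all: the deck group acts freely on $\tO_{K_{hyp}}$, so distinct lifts of globally shadowed tilings are separated by nonzero deck transformations whose images under $\bar\Gamma_*$ are nonzero in $\Z^{D'}$, not zero. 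The redundancy is a feature of the map $G'$ (equivalently, of the $\sim_{gsK_{hyp}}$ classes), not of the cover, and there is no mechanism offered for why collapsing it should produce a genuine A-P complex for some substitution. In short, the proposal correctly locates the difficulty but does not surmount it, and neither does the paper.
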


\begin{conjecture}\label{homologicalPisot2} If $\Phi$ is a unimodular Pisot family substitution and $rank(H^1_{ess}(\Omega_{\Phi};\Z))=D(\Lambda)$ then the $\R^n$-action on $\Omega_{\Phi}$ has pure discrete spectrum.
\end{conjecture}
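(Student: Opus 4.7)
The plan is to derive Conjecture \ref{homologicalPisot2} from Conjecture \ref{hypconjecture2}, paralleling the strategy of Corollary \ref{hypcon implies Pcon} but with $H^1_{ess}$ in place of $H^1(\OP;\Z)$. Assume the hypotheses: $\Phi$ is unimodular Pisot family with $\mathrm{rank}(H^1_{ess}(\OP;\Z)) = D(\Lambda)$. Since Pisot family implies hyperbolicity, Theorem \ref{main theorem'} applies. My goal is to show that the three groups $H^1_\Lambda \subseteq H^1_{hyp} \subseteq H^1_{ess}$ all coincide under these hypotheses, after which Conjecture \ref{hypconjecture2} gives $G'$ a.e. one-to-one, and Theorem \ref{gs=rp} identifies $G = G' = g$, forcing $g$ to be a.e. one-to-one and the $\R^n$-action to have pure discrete spectrum.

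First I would set up the chain $H^1_\Lambda \subseteq H^1_{hyp} \subseteq H^1_{ess}$. The first containment is by definition. For the second, I would prove an auxiliary lemma: $H^1_{hyp}$ is intrinsic under compatible substitutions, so for any compatible $\Psi$ one has $h_\Psi^*(H^1_{hyp}(\Omega_\Psi)) = H^1_{hyp}(\OP)$, with $H^1_{hyp}(\Omega_\Psi) \subseteq H^*_f(\Omega_\Psi)$ by its construction on the A-P complex of $\Psi$. Intersecting over $\Psi$ gives $H^1_{hyp} \subseteq H^1_{ess}$. Combined with the corollary to Lemma \ref{eigenvalues} ($D(GR) \geq D(\Lambda)$), the chain forces $D = D(GR) = D(\Lambda)$ and all three groups to have rank $D(\Lambda)$. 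In particular Theorem \ref{gs=rp} yields $G = g$.

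Next I would upgrade equal-rank to equality of groups. For $H^1_\Lambda = H^1_{hyp}$: dually, $K_{hyp} \subseteq K_\Lambda$ with both quotients $H_1(X;\Z)/K_\Lambda$ and $H_1(X;\Z)/K_{hyp}$ of rank $D$. The latter is free abelian, since $F_0 = \ker r(f_*')$ is saturated in the torsion-free group $H_1(X;\Z)/T$ (if $r(f_*')(nx)=0$ then $r(f_*')(x)=0$). Hence $K_\Lambda/K_{hyp}$, a rank-$0$ subgroup of a free abelian group, is trivial, so $K_\Lambda = K_{hyp}$ and $H^1_\Lambda = H^1_{hyp}$; in particular $G = G'$ since the Proposition \ref{main prop} construction depends only on the annihilator lattice. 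For $H^1_{hyp} = H^1_{ess}$: equal rank combined with $H^1_\Lambda \subseteq H^1_{ess}$ forces $\Phi^*|_{H^1_{ess}}$ to have the same characteristic polynomial over $\Q$ as $\Phi^*|_{H^1_\Lambda}$, whose roots are algebraic conjugates of $\mathrm{spec}(\Lambda)$ by Lemma \ref{eigenvalues}, hence algebraic units off the unit circle by the unimodular Pisot family hypothesis. The integer matrix of $\Phi^*|_{H^1_{ess}}$ then has determinant $\pm 1$ (product of units), so $\Phi^*$ is unimodular and hyperbolic on $H^1_{ess}$. Maximality of $H^1_{hyp}$ then forces $H^1_{ess} \subseteq H^1_{hyp}$.

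With $H^1_{hyp} = H^1_{ess}$ verified, Conjecture \ref{hypconjecture2} produces $G':\OP \to \T^{D'}$ a.e. one-to-one. Since $G = G' = g$, the maximal equicontinuous factor map $g$ is a.e. one-to-one, so the $\R^n$-action on $\OP$ has pure discrete spectrum by the Halmos--von Neumann characterization invoked in Corollary \ref{hypcon implies Pcon}. The hardest step I anticipate is the auxiliary lemma $H^1_{hyp} \subseteq H^1_{ess}$: one must track how the maximal unimodular hyperbolic $\Phi^*$-invariant subgroup behaves under arbitrary compatible-substitution conjugacies $h_\Psi$, and verify that it always lands inside the cohomology pulled back from the A-P complex of $\Psi$ (so that one can take the intersection defining $H^1_{ess}$). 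Everything else reduces to algebraic bookkeeping with ranks, characteristic polynomials, and the saturation properties of $K_\Lambda$ and $K_{hyp}$.
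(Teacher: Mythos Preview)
The statement you are addressing is Conjecture~\ref{homologicalPisot2}, which the paper leaves \emph{open}: there is no proof in the paper to compare your attempt against. Your proposal is not an unconditional proof but a conditional derivation from Conjecture~\ref{hypconjecture2}, itself open. This would be an analogue of Corollary~\ref{hypcon implies Pcon} (Conjecture~\ref{hypconjecture} $\Rightarrow$ Conjecture~\ref{homologicalPisot1}) for the essential-cohomology versions; the paper neither states nor proves such an implication, so at best you are supplying a new corollary, not proving the conjecture.

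Even regarded as a proof of the implication, your argument has a genuine gap precisely where you flag it: the inclusion $H^1_{hyp}\subseteq H^1_{ess}$. You need $H^1_{hyp}(\OP)\subseteq h_\Psi^*(H^1_f(\Omega_\Psi))$ for \emph{every} compatible $\Psi$, but $H^1_f(\Omega_\Psi)$ is, by definition, the image of the cohomology of the \emph{uncollared} A--P complex of $\Psi$, and there is no reason given why the maximal unimodular-hyperbolic $\Phi^*$-invariant piece of $H^1(\OP;\Z)$ must land there. Compare the proof of Proposition~\ref{2impliesPisotconj}: what the paper is able to place inside $h_\Psi^*(H^1_f(\Omega_\Psi))$ for an arbitrary compatible $\Psi$ is $g^*(H^1(\T^d;\Z))$, via Lemma~\ref{gfactors}; this is essentially $H^1_\Lambda$, not all of $H^1_{hyp}$. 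Without the inclusion $H^1_{hyp}\subseteq H^1_{ess}$ your rank chain does not assemble, and the subsequent steps (invoking Theorem~\ref{gs=rp} to get $D(GR)=D(\Lambda)$, and the maximality argument forcing $H^1_{ess}\subseteq H^1_{hyp}$) are left without foundation. A secondary issue: to invoke maximality of $H^1_{hyp}$ you also need $H^1_{ess}$ to be $\Phi^*$-invariant, but each summand $h_\Psi^*(H^1_f(\Omega_\Psi))$ is a priori only invariant under the \emph{power} $(\Phi^m)^*$ arising from the particular conjugacy $h_\Psi$; you should check that this suffices.
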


\begin{example} Consider the one-dimensional substitution $\Phi$ generated by the substitution
on letters: $1\mapsto 12221111$; $2\mapsto 21112$. The incidence matrix is unimodular and irreducible and the inflation $\lambda$  is the fourth power of the golden mean, a Pisot number. Thus $\Phi$ satisfies the hypotheses of the traditional Pisot Substitution Conjecture (and, in fact, the spectrum is pure discrete as the traditional Pisot conjecture is correct if the incidence matrix is $2\times2$ - \cite{HS}). But $D(\Lambda)=deg(\lambda)=2$ while $rank(H^1(\Omega_{\Phi};\Z))=3$, so the hypotheses of Conjecture \ref{homologicalPisot1} are not met. (The cohomology is easily computed by the techniques of \cite{bd1}.) On the other hand, the A-P complex for $\Phi$ is a wedge of two circles; it follows that $H^1_{ess}(\OP;\Z)=H^1_f(\OP)$ has rank two, and the hypotheses of Conjecture \ref{homologicalPisot2} are satisfied.
\end{example}

\begin{Proposition}\label{2impliesPisotconj}  Suppose $\Phi$ is a one-dimensional substitution with unimodular
and irreducible $d\times d$ incidence matrix $M$  and whose inflation $\lambda$ is a Pisot number. Then $rank(H^1_{ess}(\OP;\Z))=d$.
\end{Proposition}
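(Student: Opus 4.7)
The plan is to sandwich $\rank H^1_{ess}(\OP;\Z)$ between $d$ from above and below, by identifying the Pisot subgroup $H^1_\Lambda$ as a canonical rank-$d$ subgroup that sits (up to finite index) inside every term of the intersection defining $H^1_{ess}$.

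For the upper bound I would first verify that $H^1_f(\OP)\cong\Z^d$. Because $\Phi$ is primitive, every pair of letters occurs as an adjacent pair in some iterate; hence in the uncollared A-P complex $X$ all tile endpoints are identified to a single vertex, so $X$ is homotopy equivalent to a wedge of $d$ circles. Thus $H^1(X;\Z)\cong\Z^d$, $f^*$ is represented by the unimodular matrix $M^t$, and $\dlim f^*\cong\Z^d$, giving $H^1_f(\OP)\cong\Z^d$. Taking $\Psi=\Phi$ in the intersection defining $H^1_{ess}$ yields $H^1_{ess}\subseteq H^1_f(\OP)$ and hence $\rank H^1_{ess}\le d$.

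For the lower bound I would first check that $H^1_\Lambda$ has rank exactly $d$: Lemma~\ref{eigenvalues} gives $\rank GR(\Phi)\ge D(\Lambda)=d$, while Theorem~3.1 of \cite{LS} applied to this $(1,d)$-Pisot family substitution yields $GR(\Phi)\subseteq\Z[\lambda]v_0$ and hence rank at most $d$. The main work is to show that for every compatible $\Psi$, $h_\Psi^*(H^1_f(\Omega_\Psi))$ contains $H^1_\Lambda$ up to finite index. Given $h_\Psi\circ\Phi^n=\Psi^m\circ h_\Psi$, the inflation of $\Psi$ satisfies $\lambda_\Psi^m=\lambda^n$, and since $M_\Psi$ is a primitive integer matrix with simple Perron--Frobenius eigenvalue $\lambda_\Psi$, its characteristic polynomial is divisible by the minimal polynomial of $\lambda_\Psi$; all Galois conjugates of $\lambda^n$ therefore appear as eigenvalues of $M_\Psi^m$. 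The generalized $\lambda^n$-eigenspace of $\Psi^{*m}$ on $H^1_f(\Omega_\Psi)\otimes\Q$ thus has dimension at least $d$, and under $h_\Psi^*$ it maps into the generalized $\lambda^n$-eigenspace $E\subseteq H^1(\OP;\Z)\otimes\Q$ of $\Phi^{*n}$, which already contains $H^1_\Lambda\otimes\Q$.

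The main obstacle is to verify $\dim_\Q E=d$, so that $E=H^1_\Lambda\otimes\Q$ and the $h_\Psi^*$-image must fill $E$, forcing $H^1_\Lambda$ inside $h_\Psi^*(H^1_f(\Omega_\Psi))$ up to finite index; intersecting over all compatible $\Psi$ then gives $\rank H^1_{ess}\ge d$. To establish $\dim E=d$ I would invoke the Barge--Diamond description of one-dimensional substitution tiling cohomology \cite{bd1}: the quotient $H^1(\OP;\Z)/H^1_f(\OP)$ arises from a branch locus of asymptotic composants, on which $\Phi^*$ acts by a permutation of a finite set and hence only with roots of unity as eigenvalues; on $H^1_f(\OP)\otimes\Q=H^1_\Lambda\otimes\Q$ the action of $\Phi^*$ has characteristic polynomial equal to that of $M$, whose roots are the $d$ Galois conjugates of $\lambda$. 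Because the Pisot hypothesis makes these conjugates distinct from roots of unity, no Pisot eigenvalue appears outside $H^1_\Lambda\otimes\Q$ in $H^1(\OP;\Z)\otimes\Q$, giving $E=H^1_\Lambda\otimes\Q$ and completing the argument.
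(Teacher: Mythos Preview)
Your upper bound reaches the right conclusion but the justification is off: primitivity does \emph{not} force every ordered pair of letters to occur as an adjacency (Fibonacci has no $bb$), so you cannot read off directly that the uncollared A-P complex $X$ is a wedge of $d$ circles. The paper instead uses the irreducibility of $M$: in the exact sequence of the pair $(X,S)$, the image $\delta^*(\tilde H^0(S;\Z))$ is an $f^*$-invariant subgroup of $H^1(X,S;\Z)\cong\Z^d$ of rank strictly less than $d$, hence zero, which forces $|S|=1$. This is easily repaired.

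The lower bound, however, has a genuine gap. Your eigenspace argument yields only $h_\Psi^*(H^1_f(\Omega_\Psi))\otimes\Q\supseteq E=H^1_\Lambda\otimes\Q$, i.e.\ that each $h_\Psi^*(H^1_f(\Omega_\Psi))$ contains \emph{some} finite-index subgroup of $H^1_\Lambda$. But $H^1_{ess}$ is an intersection over a generally infinite family of compatible $\Psi$, and an infinite intersection of subgroups each of finite index in $H^1_\Lambda$ may well be trivial; ``up to finite index'' does not survive the intersection. What is needed is a \emph{single} rank-$d$ subgroup lying in every term. The paper obtains this via Lemma~\ref{gfactors}: for each compatible $\Psi$ the maximal equicontinuous factor map $g_\Psi:\Omega_\Psi\to\T^d$ factors through $\hat p_\Psi$, so $g_\Psi^*(H^1(\T^d;\Z))\subseteq H^1_f(\Omega_\Psi)$; since the maximal equicontinuous factor is canonical, $g_\Psi\circ h_\Psi$ agrees with $g_\Phi$ up to an automorphism of $\T^d$, whence $h_\Psi^*(H^1_f(\Omega_\Psi))\supseteq g_\Phi^*(H^1(\T^d;\Z))$ --- the \emph{same} rank-$d$ group for every $\Psi$. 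Theorems~\ref{main theorem} and~\ref{Pisot} are what guarantee that $g_\Phi$ lands in a $d$-torus with $g_\Phi^*$ injective. Your rational computation correctly pins down $E$, but it cannot by itself control the integral lattice uniformly across all $\Psi$.
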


Hence any substitution satisfying the hypotheses of the traditional Pisot Substitution Conjecture also satisfies the hypotheses of Conjecture \ref{homologicalPisot2}.

\begin{lemma}\label{gfactors} Suppose that $\Phi$ is an $n$-dimensional substitution. Let $f:X\to X$ be the substitution induced map on the A-P complex for $\Phi$ and let
$p:\OP\to X$ be the usual map, inducing $\hat{p}:\OP\to \inv f$. Then the map $g$ from $\OP$
onto the maximal equicontinuous factor factors through $\inv f$ via $\hat{p}$.
\end{lemma}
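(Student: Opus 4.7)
The plan is to show that $\hat{p}(T)=\hat{p}(T')$ implies $T\sim_{rp}T'$, so that $g$ is constant on the fibres of $\hat{p}$; continuity of the induced map $\il f\to\OP/\sim_{rp}$ then comes for free from the fact that $\hat{p}$ is a continuous surjection from a compact space to a Hausdorff space, hence a quotient map.

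Unpacking the hypothesis $\hat{p}(T)=\hat{p}(T')$, we have $p(\Phi^{-k}(T))=p(\Phi^{-k}(T'))$ for every $k\ge0$. By the definition of $p$, this means that for each $k$ there exist a prototile $\rho_{i_k}$ and a vector $x_k\in\rho_{i_k}$ such that $\rho_{i_k}-x_k$ is the tile containing the origin in both $\Phi^{-k}(T)$ and $\Phi^{-k}(T')$. Applying $\Phi^k$ (which inflates tiles to $k$-th order supertiles), the $k$-th order supertile $\sigma_k:=\Phi^k(\rho_{i_k})-\Lambda^k x_k$ lies (as a patch of tiles) in both $T$ and $T'$, and the origin sits at the common relative position $\Lambda^k x_k$ inside $\sigma_k$.

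Now I would prove regional proximality directly, taking $S=T$ and $S'=T'$. Given $\epsilon>0$, since $\|\Lambda^k\|\to\infty$ and each prototile has positive inradius, for all sufficiently large $k$ there is a point $y_k\in \rho_{i_k}$ whose $(1/\epsilon+1)$-ball lies in $\Phi^k(\rho_{i_k})$. Setting $v_k:=y_k-\Lambda^k x_k$, the ball $\bar B_{1/\epsilon+1}(v_k)$ lies inside the support of $\sigma_k$, which sits identically in $T$ and $T'$. Consequently $B_{1/\epsilon}[T-v_k]=B_{1/\epsilon}[T'-v_k]$, so $d(T-v_k,T'-v_k)\le\epsilon$. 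Since $d(T,T)=d(T',T')=0<\epsilon$, the triple $(T,T',v_k)$ witnesses $T\sim_{rp}T'$.

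Thus $g$ is constant on the fibres of $\hat{p}$. Because $\hat{p}:\OP\to\il f$ is a continuous surjection between compact Hausdorff spaces (hence a closed, and therefore quotient, map) and $g$ is continuous, the unique set-theoretic factorization $\bar g:\il f\to\OP/\sim_{rp}$ with $g=\bar g\circ\hat{p}$ is automatically continuous. The only step that requires any thought is the geometric one: turning the supertile-coincidence that $\hat{p}(T)=\hat{p}(T')$ delivers into an honest translation $v$ placing the origin deep inside the shared supertile; everything else is formal.
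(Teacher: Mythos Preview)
Your argument is essentially the paper's treatment of the \emph{interior} case, but it overlooks the boundary case, and that is a genuine gap.

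The step ``by the definition of $p$, \ldots\ $\rho_{i_k}-x_k$ is the tile containing the origin in both $\Phi^{-k}(T)$ and $\Phi^{-k}(T')$'' is only valid when $p(\Phi^{-k}(T))$ lies in the interior of an $n$-cell of $X$. Recall that $X$ is built by gluing prototiles along faces via the equivalence $\sim$, which is the \emph{transitive closure} of ``these two faces abut in some tiling of $\Omega$''. So a single point on the $(n-1)$-skeleton of $X$ can correspond to several distinct local configurations, linked by a chain of tilings none of which need be $\Phi^{-k}(T)$ or $\Phi^{-k}(T')$. In particular, $p(\Phi^{-k}(T))=p(\Phi^{-k}(T'))$ on the skeleton does \emph{not} force the two tilings to share a tile at the origin, and hence there is no common $k$-supertile to translate into. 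This situation really occurs: any $\Phi$-periodic tiling with the origin at a vertex has $p(\Phi^{-k}(T))$ on the skeleton for every $k$.

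The paper's proof splits into exactly these two cases. When $x_i$ lies in the interior of an $n$-cell for infinitely many $i$, the argument is yours: common supertiles of growing inradius give proximality. When all $x_i$ lie on the $(n-1)$-skeleton, one unwinds the gluing relation to get, for each $i$, a finite chain $T^i_1=\Phi^{-i}(T),T^i_2,\ldots,T^i_{k_i}=\Phi^{-i}(T')$ of tilings with adjacent pairs sharing a tile at the origin; passing to a subsequence with constant chain length and taking limits of the $\Phi^{i}(T^i_j)$ produces a chain of proximal pairs from $T$ to $T'$, and transitivity of $\sim_{rp}$ finishes the job. Your proof needs this second half.
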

\begin{proof}
Let $f_c:X_c\to X_c$ be the substitution induced map of the collared A-P complex for $\Phi$, let $\pi:X_c\to X$ be the map that forgets collars, and let $p_c:\OP\to X_c$ be as usual. It suffices to show that if $(x_i),(x'_i)\in \inv f_c$ are such that $\hat{\pi}((x_i))=\hat{\pi}((x'_i))$ then the tilings $T=\hat{p_c}^{-1}((x_i))$ and $T'=\hat{p_c}^{-1}((x'_i))$ are regionally proximal. Suppose then that 
$\hat{\pi}((x_i))=\hat{\pi}((x'_i))$. If $x_i$, and hence $x'_i$ are in the interior of an $n$-cell for infinitely many (hence all) $i$, let $i_j\to\infty$, $\rho$, and $v_j$ be such that $B_0[\Phi^{-i_j}(T)]=\rho-v_j=B_0[\Phi^{-i_j}(T')]$. Then the patches $\Phi^{i_j}(\rho)-\Lambda^{i_j}$ are subsets of both $T$ and $T'$. That is, $T$ and $T'$ share patches of arbitrarily large internal diameter, and hence $T$ and $T'$ are proximal. 

If, on the other hand, $x_i$ and $x'_i$ are in the $n-1$ skeleton of $X_c$ for all $i$, $\pi(x_i)=\pi(x'_i)=:y_i$ is in the $n-1$ skeleton of $X$ for all $i$. From the definition of the equivalence relation
that specifies the way in which the prototiles are glued along their boundaries to form $X$, there are, for each $i\in\N$, $T^i_1,\ldots,T^i_{k_i}\in\OP$ and tiles $\tau^i_j\ne\tau^i_{j+1}\in T^i_j$ so that $0\in\tau^i_j\cap\tau^i_{j+1}$ for all $i,j$, $T^i_1=\Phi^{-i}(T)$, and $T^i_{k_i}=\Phi^{-i}(T')$.
The $k_i$ are bounded and we may select $i_l\to\infty$ so that $k=k_{i_l}$ is constant and the collection $\{\tau^{i_l}_j:j=1,\ldots,k\}$ is constant up to translation. Using compactness of $\OP$ we may pass to a subsequence $I_{l_s}\to\infty$ so that $\Phi^{i_{l_s}}(T^{i_{l_s}}_j)$ converges, say to $T_j$, for $j=1,\ldots,k$. As above, $T_j$ and $T_{j+1}$ share patches of arbitrarily large internal diameter, and are hence proximal, for each $j$. As regional proximality is an equivalence relation and contains the proximality relation, $T=T_1$ and $T'=T_k$ are regionally proximal.
\end{proof}

Suppose that $\Phi$ is a one-dimensional (primitive) substitution whose inflation is a degree $d$ 
Pisot unit. Let $X_c$ be the collared A-P complex for $\Phi$, with vertex set $S_c$ and substitution induced map $f:(X_c,S_c)\to (X_c,S_c)$. In an appropriate basis, $f^*:H^1(X_c,S_c;\Z)\to H^1(X_c,S_c;\Z)$ is represented by the transpose, $M^t$, of the incidence matrix for $\Phi$. There is then a unique $f^*$-invariant subgroup $\mathcal{P}$ of $H^1(X_c,S_c;\Z)$ of rank $d$ with the properties that $f^*|_{\mathcal{P}}$ is represented by an integer matrix with eigenvalue $\lambda$, and $\mathcal{P}$ is maximal in the sense that $H^1(X_c,S_c;\Z)/\mathcal{P}$ is torsion free.
By taking the direct limit by $f^*$ of the short exact sequence for the pair
$$0\to\tilde{H}^0(S_c,\Z)\to H^1(X_c,S_c;\Z)\to H^1(X_c;\Z)\to0$$
and noting that all eigenvalues of (a matrix representing) $f^*:\tilde{H}^0(S_c;\Z)\to\tilde{H}^0(S_c;\Z)$ are 0 or roots of unity, we see that $H^1(\OP;\Z)=\dlim f_*:H^1(X_c;\Z)\to H^1(X_c;\Z)$ also contains a unique subgroup, invariant under $\Phi^*$, with the properties of $\mathcal{P}$. We will call this subgroup the {\bf Pisot subgroup} of $H^1(\OP;\Z)$.

\begin{proof}(of Proposition \ref{2impliesPisotconj})
Let $\Phi$ be as in the proposition, let $f:X\to X$ be the substitution induced map on its A-P complex, and let $S$ be the vertex set of $X$. The homomorphism $f^*:H^1(X,S;\Z)\to H^1(X,S;\Z)$ is represented by the transpose $M^t$ of the incidence matrix (in the basis dual to the prototile edges of $X$). As $M$ is invertible over $\Z$, the direct limit of this homomrphism is $\Z^d$. Moreover, the subgroup $\delta^*(H^0(S;\Z))\subset H^1(X,S;\Z)$ is $f^*$-invariant, so, by irreducibility of $M$, this subgroup must be trivial. From the exact sequence for the pair we see
that $f^*:H^1(X,S;\Z)\to H^1(X;\Z)$ is conjugate with $f^*:H^1(X;\Z)\to H^1(X;\Z)$, and hence $H^1(\inv f;\Z)$ is also $\Z^d$. 

Let $X_c$ denote the collared A-P complex for $\Phi$ with vertex set $S_c$ and map $f_c:(X_c,S_c)\to (X_c,S_c)$ and let  $p_c:\OP\to X_c$ and
$p:\OP\to X$ be the standard maps onto the A-P complexes. Let $\pi:(X_c,S_c)\to (X,S)$ be the map that forgets collars. Then $\pi^*:H^1(X,S;\Z)\to H^1(X_c,S_c;\Z)$ is injective and 
it follows that the image of $H^1(X;\Z)$ in $H^1(X_c;\Z)$ under $\pi^*$ is an $f_c^*$-invariant
copy of $\Z^d$ on which $f_c^*$ acts like $M^t$. Thus $H^1_f(\OP)$ has rank $d$ and 
$rank(H^1_{ess}(\OP;\Z))$ is at most $d$.

Suppose that $\Psi$ is a 1-d substitution with inflation $\eta$ such that $\Psi^n$ is conjugate with  
$\Phi^m$, by means of $h_{\Psi}$, for some $n,m\in\N$. It is easily checked that the topological entropies of the homeomorphisms $\Psi$ and $\Phi$ are $\log(\eta)$ and $\log(\lambda)$, hence $\eta^m=\lambda^n$, as conjugacies preserve entropy. Thus $\eta^m$ is a Pisot unit, also of degree $d$
(as $M^n$ is irreducible).
% reason: $M^n$ has only one eigenvalue of modulus greater than - if its char poly factored the constant terms would both be $\pm 1$ and a product of eigenvalues of modulus less that 1 would be $\pm 1$.
Now, by Lemma \ref{gfactors}, if $f:X\to X$ is the substitution induced map on the A-P  complex for $\Psi$, and $p:\Omega_{\Psi}\to X$ is the usual map, the map $g:\Omega_{\Psi}\to \T^d$ onto the maximal equicontinuous factor factors through
$\inv f$ via $\hat{p}$. Thus $H^1_f(\Omega_{\Psi})=
\hat{p}^*(H^1(\inv f;\Z))\supset g^*(H^1(\T^d;\Z))$. By Theorems \ref{main theorem} and \ref{Pisot},
$g^*$ conjugates $F_A^*:H^1(\T^d;\Z)\to H^1(\T^d;\Z)$ with $\Psi^*$ restricted to the image of $g^*$. The matrix $A$ is a companion matrix for $\eta^m$ (see the proof of Theorem \ref{main theorem} or \cite{BK}). It follows that $H^1_f(\Omega_{\Psi})$ contains the Pisot subgroup for $\Psi^*$ and hence $h_{\Psi}^*(H^1(\Omega_{\Psi}))$ contains the Pisot subgroup for $\Phi$. Thus $rank(H^1_{ess}(\OP;\Z))$ is at least $d$.
\end{proof}


\begin{thebibliography}{99}

\bibitem[A]{Aus} J.\ Auslander, Minimal flows and their extensions,
  {\em North-Holland Mathematical Studies}, vol. 153, North-Holland,
  Amsterdam, New York, Oxford, and Tokyo, (1988).
  
\bibitem[AP]{AP} J.E. Anderson and I.F. Putnam, Topological invariants
  for substitution tilings and their associated $C^*$-algebras, {\em
    Ergodic Theory \& Dynamical Systems} \textbf{18} (1998), 509--537.
    
%\bibitem[BLM]{BLM} M. Baake, D. Lenz, B. Moody, Characterization of model sets by dynamical systems,
%{Ergod. TH. \& Dynam. Sys.} \textbf{27} (2007) 341--382.    
    
%\bibitem[B]{B} M.\ Barge, Subdynamics in tiling spaces, 2010 preprint.

\bibitem[BBJS]{BBJS} M. Barge, H. Bruin, L. Jones, and L. Sadun, Homological Pisot substitutions and exact regularity, {\em Israel J. Math.} {\bf  } (2012), 1-20.
    
\bibitem[BD1]{bd1} M. Barge and  B. Diamond, Cohomology in one-dimensional
substitution tiling spaces, {\em Proc. Amer. Math. Soc.} {\bf 136}, no. 6, (2008), 2183-2191.

%\bibitem[BD2]{bargediamond2} M. Barge and B. Diamond, Proximality In Pisot
%tiling spaces, \emph{Fundamenta Mathematica}, 194 (2007), 191-238.

\bibitem[BK]{BK} M. Barge and J. Kellendonk, Proximality and pure point spectrum for tiling dynamical systems, 2011 preprint.
%\bibitem[BK]{BK} M.\ Barge and J.\ Kwapisz, Geometric theory of
 % unimodular Pisot substitutions, \textit{Amer J. Math.} {\bf128}
 % (2006), 1219-1282.
 
 \bibitem[BKw]{BKw} M. Barge and J. Kwapisz, Hyperbolic pseudo-Anosov maps a.e. embed into toral automorphisms, {\em Ergodic Theory \& Dynamical Systems}, {\bf 26} (2006), 961-972.
  
\bibitem[BO]{BO} M.\ Barge and C.\ Olimb, Asymptotic structure in substitution tiling spaces, 2010 preprint.

\bibitem[BSW]{BSW} M. Barge, S. \v{S}timac, and R. F. Williams, Pure discrete spectrum in substitution tiling spaces, 2011 preprint.

\bibitem[B]{Berger} R. Berger, The undecidability of the domino problem, {\em Memoirs Amer. Math. Soc.} {\bf  66} (1966).

\bibitem[BS]{BS} V. Berth\'{e} and A. Siegel, Tilings associated with beta-numeration and substitutions, {\em Integers: Electronic Journal of Combinatorial Number Theory} {\bf 5} (2005), A02.
%\bibitem[FHK]{FHK}
%A.H.~Forrest, J.~Hunton, and J.~Kellendonk,
%Topological invariants for projection method patterns.
%{\em Memoirs of the AMS}, 159(758), x+120~pp, 2002.
\bibitem[F]{F} A. Fathi, Some compact invariant sets for hyperbolic linear automorphisms of torii,
{\em Ergodic Theory \& Dynamical Systems} {\bf 8} (1988), 191-204.

\bibitem[Fr]{Fr} J. Franks, Anosov diffeomorphisms, in {\em Global Analysis (Berkeley, CA, 1968)}, vol. 14 of {\em Proc. Symp. Pure Math.}, pages 61-93, Providence, RI, 1970. Amer. Math. Soc.

\bibitem[HS]{HS} M. Hollander and B. Solomyak, Two-symbol Pisot substitutions have pure discrete spectrum, {\em  Ergodic Theory \& Dynamical Systems} {\bf 23} (2003), 533-540.
    
 %\bibitem[D]{D} S.\ Dworkin, Spectral theory and X-ray diffraction, {\em J. Math. Phys.} {\bf 34} (1993),
  % 2965-2967.
   


\bibitem[K1]{K} R.\ Kenyon, The construction of self-similar tilings, {\em Geom. Funct. Anal.} {\bf6}(1996), no. 3, 471-488.

\bibitem[K2]{K2} R.\ Kenyon, Ph.D. Thesis, Princeton University, 1990.

% \bibitem[Kr]{Kr} J.\ Krasinkiewicz, Mappings onto circle-like continua, {\em Fund. Math.}, {\bf 91} (1976),
%   39-49.
       
\bibitem[KS]{KS} R. Kenyon and B. Solomyak, On the characterization of expansion maps for
 self-affine tilings, {\em Discrete Comp. Geom.} Online, 2009.
   
%\bibitem[KL]{KL} J. Kellendonk, D. Lenz, Equicontinuous Delone
 % dynamical systems, preprint 2011.

%\bibitem[Ku]{Kurka} P. Kurka, Topological and symbolic dynamics. Soci\'et\'e Math\'matique de France, 2003.

\bibitem[LMS]{LMS} J.-Y. Lee, R. Moody and B. Solomyak, Consequences of Pure Point Diffraction Spectra for Multiset Substitution Systems, {\em Discrete Comp. Geom.} {\bf29}(2003), 525-560.
    
\bibitem[LS1]{LS} J.-Y. Lee and B. Solomyak, Pisot family self-affine tilings, discrete spectrum, 
  and the Meyer property, 2010 preprint.
  
\bibitem[LS2]{LS2} J.-Y. Lee and B. Solomyak, Pure point diffractive substitution Delone sets have  
  the Meyer property, {\em Discrete Comp. Geom.} {\bf34} (2008), 319-338.

\bibitem[M]{M} R. Moody, Meyer Sets and Their Duals, in The Mathematics of Long-Range Aperiodic Order, pages 403-441, editor R.  Moody,  Kluwer Academic Publishers, 1997.  

%\bibitem[Pr]{Pr} B.\ Praggastis, Numeration systems and Markov partitions from self similar tilings,
%{\em Trans. Amer. Math. Soc.} {\bf351}(1999), no. 8, 3315-3349.
  
\bibitem[P]{P} I. F. Putnam, A homology theory for Smale spaces, 2010 preprint.


\bibitem[R]{R} E.A. Robinson, A Halmos-von Neumann theorem for model sets, and almost automorphic dynamical systems, {\em Dynamics, ergodic theory, and geometry},  Math. Sci. Res. Inst. Publ. {\bf54}, Cambridge Univ. Press, Cambridge, (2007), 243Ð272.
%\bibitem[Schl]{Schlottmann} M. Schlottmann, Generalized Model Sets and Dynamical Systems.
%%In M.~Baake and R.~Moody, editors,
%{\em Directions in Mathematical Quasicrystals}, pages 143--160,
%CRM Monograph Series, AMS, Providence (RI), 2000.


\bibitem[S1]{sol} B. Solomyak, Nonperiodicity implies unique
  composition for self-similar translationally finite tilings, {\em
    Discrete Comput. Geometry} {\bf 20} (1998), 265--279.
    
\bibitem[S2]{S} B.\ Solomyak, Eigenfunctions for substitution tiling systems,\emph{ Advanced Studies in Pure Mathematics}, \textbf{49} (2007), 433--454.

\bibitem[S3]{S3} B.\ Solomyak, Dynamics of self-similar tilings, {\em
    Ergodic Theory \& Dynamical Systems} {\bf17} (1997), 695-738.
    
%\bibitem[T]{Th} W.\ Thurston, AMS lecture notes, 1989.

\end{thebibliography}
\end{document}